\numberwithin{equation}{section}
\newcommand{\R}{\mathbb{R}}
\newcommand{\N}{\mathbb{N}}
\newcommand{\pr}{\mathbb{P}}
\newcommand{\cond}[1]{\, #1 \vert \,}
\newcommand{\inner}[2]{\left<#1,#2\right>}
\newcommand{\AN}{A_{N-1}}
\newcommand{\C}{W}
\newcommand{\Cc}{\overline{W}}
\newcommand{\Cb}{\partial \C}
\newcommand{\Cbm}{\partial \C^{(m)}}
\newcommand{\Aaclear}{\mathcal{A}}
\newcommand{\Aa}{\mathcal{A}^{w}}
\newcommand{\xa}{\left<x,\alpha\right>}
\newcommand{\xb}{\left<x,\beta\right>}
\newcommand{\xc}{\left<x,\gamma\right>}
\newcommand{\xta}{\left<x(t),\alpha\right>}
\newcommand{\xtb}{\left<x(t),\beta\right>}
\newcommand{\xas}{\left<x,\alpha^*\right>}
\newcommand{\xtas}{\left<x(t),\alpha^*\right>}
\newcommand{\xbs}{\left<x,\beta^*\right>}
\newcommand{\xtbs}{\left<x(t),\beta^*\right>}
\newcommand{\xsbs}{\left<x(s),\beta^*\right>}
\newcommand{\xcs}{\left<x,\gamma^*\right>}
\newcommand{\xds}{\left<x,\delta^*\right>}
\newcommand{\xzs}{\left<x,\zeta^*\right>}
\newcommand{\xtzs}{\left<x(t),\zeta^*\right>}
\newcommand{\xszs}{\left<x(s),\zeta^*\right>}
\newcommand{\ya}{\left<y,\alpha\right>}
\newcommand{\aR}{\alpha\in R_+}
\newcommand{\bR}{\beta\in R_+}
\newcommand{\SaR}{\sum_{\alpha\in R_+}}
\newcommand{\SabR}{\sum_{\alpha\in R_+}\sum_{\beta\in R_+}}
\newcommand{\SanbR}{\mathop{\sum_{\aR}\sum_{\bR}}_{\alpha\neq \beta}}
\newcommand{\SaPz}{\sum_{\alpha\in\Pzero}}
\newcommand{\SbPzna}{\sum_{\beta\in\Pzero\backslash\{\alpha\}}}
\newcommand{\SabPz}{\sum_{\alpha,\beta\in\Pzero}}
\newcommand{\SaRnb}{\sum_{\aR\backslash\{\beta\}}}
\newcommand{\Sni}{\sum_{i=1}^N}
\newcommand{\ab}{\left<\alpha,\beta\right>}
\newcommand{\ac}{\left<\alpha,\gamma\right>}
\newcommand{\cb}{\left<\gamma,\beta\right>}
\newcommand{\bc}{\left<\beta,\gamma\right>}
\newcommand{\abs}{\left<\alpha^*,\beta^*\right>}
\newcommand{\cbs}{\left<\gamma^*,\beta^*\right>}
\newcommand{\bcs}{\left<\beta^*,\gamma^*\right>}
\newcommand{\yy}{\left<y,y\right>}
\newcommand{\yz}{\left<y,z\right>}
\newcommand{\reflect}[1]{\varrho_{#1}}
\newcommand{\shift}{\,\,}
\newcommand{\eA}[1]{e_{#1}(\Aa)}
\newcommand{\eaA}[1]{e_{#1}^{\shift\overline{\alpha}}(\Aa)}
\newcommand{\ebA}[1]{e_{#1}^{\shift\overline{\beta}}(\Aa)}
\newcommand{\eabA}[1]{e_{#1}^{\overline{\alpha}, \overline{\beta}}(\Aa)}
\newcommand{\eabcA}[1]{e_{#1}^{\overline{\alpha}, \overline{\beta}, \overline{\gamma}}(\Aa)}
\newcommand{\eAc}[1]{e_{#1}(\Aaclear)}
\newcommand{\eaAc}[1]{e_{#1}^{\shift\overline{\alpha}}(\Aaclear)}
\newcommand{\eabAc}[1]{e_{#1}^{\overline{\alpha}, \overline{\beta}}(\Aaclear)}
\newcommand{\eacAc}[1]{e_{#1}^{\overline{\alpha}, \overline{\gamma}}(\Aaclear)}
\newcommand{\norm}[1]{\left|#1\right|}
\newcommand{\Pzero}{\mathcal{P}^0(y)}
\newcommand{\Pplus}{\mathcal{P}^+(y)}
\newcommand{\domain}{D}
\newcommand{\Pzerox}{\mathcal{P}^0(x)}
\newcommand{\SimplePlus}{\Delta_+}
\newcommand{\lifetime}{T_\infty}
\theoremstyle{plain}
\newtheorem{theorem}{Theorem}
\newtheorem{lemma}[theorem]{Lemma}
\newtheorem{corollary}[theorem]{Corollary}
\newtheorem{proposition}{Proposition}
\newtheorem{definition}{Definition}
\title{Collision types and times in interacting particle systems}
\author{Sergio Andraus$^1$, Nicole Hufnagel$^2$, Jacek Małecki$^3$}
\date{
	$^1$Japan International University\\%
	$^2$Heinrich Heine University Dusseldorf\\
	$^3$Wrocław University of Science and Technology\\[2ex]%
	September 29, 2025}
\begin{document}

\maketitle

\begin{abstract}
    We consider a system of stochastic interacting particles with general diffusion coefficient and drift functions and we study the types of collisions that arise in them.
    In particular, interactions between particles are inversely proportional to their separation, and the coupling function of interaction is also considered in great generality.
    Our main result indicates that under very mild conditions, all collisions are simple almost surely, namely, only one pair of particles collides at any time, while more complicated collisions such as three-body or disjoint two-body collisions occur with zero probability.
    In order to obtain our results we make use of symmetric polynomials on the square of particle separations; the degree of these polynomials indicates the type of collision, and by a locality argument we show that polynomials indicating a non-simple collision almost surely do not cancel.
    We make use of our main result to study the Hausdorff dimension of times at which collisions occur, and we show that this dimension is given by the ratio between the interaction coupling and diffusion functions.    
    Our results cover many of the most well-known particle systems, such as the Dyson model and Wishart processes and their extensions to non-constant diffusion coefficients and background drifts.
\end{abstract}

\section{Introduction}

Stochastic particle systems have been an important topic of study in both mathematics and physics. In particular, systems like the Dyson model in nuclear physics \cite{Dyson62A} and Wishart processes in statistics \cite{Bru91} are representative due to their connections to random matrix theory and integrable systems \cite{KatoriTanemura04, Forrester10}. These particle systems are special cases of the more general particle system $x = (x_1,\ldots,x_N)$ we consider in this paper, which is defined as the solution to the following SDE
\begin{equation}
	\label{eq:x:SDE}
	dx_i(t) = \sigma\big(x_i(t)\big)dB_i(t)+b\big(x_i(t)\big)dt+\SaR k_{\alpha}\big(x(t)\big)\dfrac{\alpha_i}{\xta} \,dt.
\end{equation}
Details regarding the objects in this SDE are introduced in Section~\ref{sec:roots}, but for now we clarify that for a fixed root system $R$, the collection of selected positive roots is denoted by $R_+$, and $\C$ stands for the corresponding positive Weyl chamber, with $\Cc$ its closure, and the process $x\in\Cc$. Also, for two vectors $y, z \in \R^N$, we denote the standard (Euclidean) inner product by $\inner{y}{z}$. 
Let us denote by $\lifetime:=\inf\{t>0\cond{}\exists\, k=1,\dots, N: \norm{x_k(t)}=\infty\}$ the lifetime of $x$. For simplicity, we omit the time variable $t$ in parentheses whenever it is not essential for the calculations. We can also write the above system of equations in the form of a single multivariate equation
\begin{equation}
	\label{eq:x:SDE:mult}
	dx = \sigma(x)dB+b(x)dt+\SaR k_{\alpha}(x)\dfrac{\alpha}{\xa} \,dt\/,
\end{equation}
where we slightly disrupt notation by writing $\sigma(x)=(\sigma(x_1),\ldots,\sigma(x_N))$ and $b(x) = (b(x_1),\ldots,b(x_N))$. We will use this notation in the cases where we talk about the diffusion coefficient $\sigma$ and the drift function $b$ as functions of $N$ variables.
In contrast, the coupling interaction function $k_\alpha(x)$ always indicates a multivariate function.

We see from the form of \eqref{eq:x:SDE} that there is a singularity when $\xa$ approaches zero, which is one of the most interesting features of the class of systems we consider.
In fact, this type of singularity corresponds to the situation where two particles collide, and this is equivalent to the situation where $x$ hits the boundary of the Weyl chamber, $\Cb$.
Indeed, it is known that for a subset of the processes we consider, the first time in which $x$ hits $\Cb$ is almost surely finite when the coupling interaction function is sufficiently weak \cite{Demni09}.
The objectives of this paper are the following.
First, we aim to clarify which types of collision are possible in these processes, more specifically, we show that non-simple collisions, namely collisions that involve more than two particles at any given time, almost surely do not occur.
Second, we derive the Hausdorff dimension of the times where collisions occur, providing detailed conditions for their almost-sure occurrence.

Let us introduce the fundamental conditions required for the stating our results. Since the form of the considered SDEs \eqref{eq:x:SDE} is very general, we introduce the following collection of assumptions, which establish the general framework for our considerations. We begin with very general assumptions. The functions $\sigma,b$ in \eqref{eq:x:SDE} are single-valued and defined on the domain $$\domain:=\big\{y\in \R\cond{}\exists\ z\in \Cc, k\in\{1,\dots,N\}: z_k=y\big\}.$$
\begin{enumerate}[label=(G\arabic*), leftmargin=2cm]
    \item \label{ass:sigma:b:continuous} 
    The functions $\sigma, b:\domain \to \R$ and $k_\alpha:\Cc\to \R$ are continuous for every $\alpha\in R$. 
    \item \label{ass:G:sol} 
    The process $x=(x_1,\ldots,x_N)$ is a solution to \eqref{eq:x:SDE:mult} (or equivalently to \eqref{eq:x:SDE}).
    \item \label{ass:R:root system} 
    We restrict the root system to $R\in\{A_{N-1},B_N,D_N\}$ and hence for all $\alpha,\beta\in R$ we have $\ab=\pm 1$, whenever $\ab\neq 0$. Moreover, the SDEs \eqref{eq:x:SDE} and \eqref{eq:x:SDE:mult} are identical for $B_N$ and $C_N$, so the latter case is covered by the former. Note that these root systems are both reduced and crystallographic.
\end{enumerate}
    Here, we point out that our results quite possibly hold for arbitrary root systems, though we leave them out of the scope of this paper as the ones related to particle systems are those included in \ref{ass:R:root system}.

    The existence of a solution to \eqref{eq:x:SDE:mult} is ensured by the results given in the upcoming paper \cite{Malecki25}, where it is shown that continuity of the coefficients together with positivity of $k_\alpha$, for every $\alpha\in R_+$, are enough to construct a solution. In particular, \ref{ass:sigma:b:continuous} together with \ref{ass:sigma:k:positive} presented below are enough to ensure the existence of $x = (x_1,\ldots,x_N)$. From now on, all the coefficient functions are continuous as in \ref{ass:sigma:b:continuous}, whenever we talk about a root system, we work under \ref{ass:R:root system} and when we consider the process $x=(x_1,\ldots,x_N)$, it is always defined as in \ref{ass:G:sol}.

    To present the last assumptions let us denote by $\SimplePlus$ the family of positive simple roots \cite{Humphreys}. Recall that every positive root $\beta\in R_+$ can be written as a sum of some number of positive roots, i.e. $\beta = n_1\alpha_1+\ldots+n_k\alpha_k$, for some $k$ and $\alpha_1,\ldots,\alpha_k \in \SimplePlus$ as well as positive natural numbers $n_1,\ldots,n_k$. Let us denote this unique set of simple roots by $\SimplePlus(\beta)$ and we define the partial root ordering $\beta_1\leq \beta_2$ to mean that $\SimplePlus(\beta_1)\subseteq \SimplePlus(\beta_2)$. This allows us to complement the list of general assumptions with the following. 
    \begin{enumerate}[label=(A\arabic*), leftmargin=2cm]
        \item \label{ass:sigma:k:positive} 
        The functions $\sigma$ and $k_\alpha$ are strictly positive for every $\alpha\in R_+$.
        \item \label{ass:b:monotone} 
        The function $b$ satisfies the inequality 
        \[\Sni\alpha_i b(x_i)\leq0\]
        for every $\alpha\in\SimplePlus$. 
    \item \label{ass:k:monotonicty}
    For every pair of positive roots $\alpha,\beta \in R_+$ such that $\alpha\leq \beta$ and $\ab\neq 0$ we have
    \begin{equation*}
        \dfrac{k_\alpha(x)}{\xa}\geq \dfrac{k_\beta(x)}{\xb}\/,\quad x\in \Cc.
    \end{equation*}
\end{enumerate}
    We remark that \ref{ass:b:monotone} takes explicit forms depending on the root system: for $A_{N-1}$, function $b$ is non-increasing, and for  $R=B_N$ and $D_N$, $b$ is in addition, non-positive. The non-positivity condition may be ignored if all collisions under consideration are between particles and not with spatial boundaries, namely, the origin for $B_N$ and $D_N$.
    
    Condition \ref{ass:k:monotonicty} states that the coupling interaction function $k_\alpha$ is such that closer particles always experience a stronger repulsion (for positive roots, the partial ordering $\alpha\leq\beta$ implies that $\xa\leq\xb$).
\bigskip

The first result, which enables us to explore more sophisticated properties of the set of collision times, describes the nature of collisions. 

\begin{theorem}
    \label{th:multiplecollisions}
    Let $x=(x_1,\ldots,x_N)$ be a solution to \eqref{eq:x:SDE:mult} and assume that \ref{ass:sigma:b:continuous} and \ref{ass:sigma:k:positive} hold.
    Then, there are no double collisions after the start, i.e. for every two different $\alpha,\beta \in R_+$ we have
	\begin{equation*}
	    \xta^2+\xtb^2>0\/,\quad \textrm{for all $t\in(0,\lifetime)$}
	\end{equation*}
    almost surely.
\end{theorem}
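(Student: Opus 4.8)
\emph{Proof strategy.} Since $R_+$ is finite, it suffices to fix two distinct positive roots $\alpha,\beta\in R_+$ and to show that the process almost surely never meets, on $(0,\lifetime)$, the set $L=\{y:\langle y,\alpha\rangle=\langle y,\beta\rangle=0\}$; because $\alpha\neq\beta$ are roots of a reduced system they are linearly independent, so $L$ has codimension two. The plan is to construct, near each configuration $p^*\in L$, a nonnegative function of $x$ that vanishes exactly on $L$ and which, evaluated along the process, behaves like a squared Bessel process of dimension strictly larger than $2$; such a process does not reach $0$, and a localization-and-covering argument then promotes this to the global statement. (Globally this function is the pairwise incarnation of the symmetric polynomial $e_{\norm{R_+}-1}$ of the squared separations $\{\langle x,\gamma\rangle^2\}_{\gamma\in R_+}$, whose vanishing is exactly the signature of a non-simple collision; near $L$ it reduces, up to a strictly positive factor, to $\xta^2+\xtb^2$.)

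Concretely, I would work with the transverse coordinates $u=\langle x,\alpha\rangle$, $v=\langle x,\beta\rangle$ and the Gram quadratic form
\[
Z \;=\; \norm{\beta}^2\,u^2 \;-\; 2\ab\, uv \;+\; \norm{\alpha}^2\,v^2 ,
\]
which is positive definite because $D:=\norm{\alpha}^2\norm{\beta}^2-\ab^2>0$, so that $Z>0$ is \emph{equivalent} to $\xta^2+\xtb^2>0$. Applying It\^o's formula and collecting the singular contributions of the interaction term $\SaR k_\gamma\,\gamma/\langle x,\gamma\rangle$, the key identity is that for \emph{every} root $\gamma=m\alpha+n\beta$ lying in $\mathrm{span}(\alpha,\beta)$ one has $Z_u\langle\gamma,\alpha\rangle+Z_v\langle\gamma,\beta\rangle=2D\,\langle x,\gamma\rangle$, so the associated singular term $\left(Z_u\langle\gamma,\alpha\rangle+Z_v\langle\gamma,\beta\rangle\right)k_\gamma/\langle x,\gamma\rangle$ collapses to the bounded, strictly positive constant $2D\,k_\gamma$. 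Roots outside $\mathrm{span}(\alpha,\beta)$ keep $\langle x,\gamma\rangle$ bounded away from $0$ near $p^*$, so their contribution, together with that of $\sigma$ and $b$, stays bounded. Hence the drift of $Z$ tends, as $x\to L$, to $\Xi+2D\sum_\gamma k_\gamma$, where $\Xi=\sum_i\sigma(x_i)^2\bigl(\norm{\beta}^2\alpha_i^2-2\ab\,\alpha_i\beta_i+\norm{\alpha}^2\beta_i^2\bigr)>0$ is the It\^o correction and the sum runs over the vanishing in-span roots, while the quadratic variation of the martingale part is comparable to $Z$.

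This identifies $Z$ locally with a squared Bessel process whose dimension exceeds $2$ precisely because $k_\alpha,k_\beta>0$ and $\sigma>0$; this is exactly where the strict positivity \ref{ass:sigma:k:positive} is used, and explains why the monotonicity hypotheses \ref{ass:b:monotone}--\ref{ass:k:monotonicty} play no role here. To make the dimension count uniform I would, at each $p^*$, rescale $u$ and $v$ so that the diffusion coefficient of $Z$ becomes (at $p^*$) an exact multiple of $Z$: when $\alpha$ and $\beta$ share a particle the relevant values of $\sigma$ coincide at $p^*$ and the Gram form already works, whereas in the disjoint case one balances the weights against $\Sigma_\alpha:=\sum_i\sigma(x_i)^2\alpha_i^2$ and $\Sigma_\beta$. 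In either case one obtains, on a neighborhood of $p^*$, an inequality of the form $2\mu Z\ge(1+\theta)\rho$ with $\theta>0$, where $\mu$ and $\rho$ denote the drift and the diffusion rate of $Z$; this is equivalent to $Z^{-\theta}$ being a local supermartingale, which prevents $Z$ from reaching $0$ before the process leaves the neighborhood.

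Finally I would assemble the local statements. Localizing by $\tau_m=\lifetime\wedge m\wedge\inf\{t:\norm{x(t)}\ge m\}$ confines the process to a compact set, so $L$ intersected with that set is covered by finitely many of the neighborhoods above; on the event that $x$ meets $L$, the first such time $\tau\le\tau_m$ would occur inside one of them, forcing the corresponding $Z$ to vanish without the process leaving the neighborhood, contradicting the supermartingale bound. Letting $m\to\infty$ shows $L$ is a.s.\ avoided on $(0,\lifetime)$, and the finite union over pairs $\alpha\neq\beta$ gives $\xta^2+\xtb^2>0$ for all $t\in(0,\lifetime)$ almost surely. I expect the main obstacle to be the singular cross-interaction between the two hyperplanes, including the extra in-span roots (such as the third separation in a triple collision); it is defused by the exact cancellation $2D\,\langle x,\gamma\rangle$ furnished by the Gram form, which converts every in-span singularity into a bounded positive drift and is precisely what pushes the Bessel dimension above $2$.
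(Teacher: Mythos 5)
Your local machinery is sound where it applies: the Gram identity $Z_u\inner{\gamma}{\alpha}+Z_v\inner{\gamma}{\beta}=2D\inner{x}{\gamma}$ checks out, it is the same cancellation mechanism the paper packages in Lemma~\ref{prop:AP:alfagamma} (pairing $\alpha$ with $\gamma=\pm\reflect{\beta}\alpha$ to defuse the $1/\inner{x}{\beta}$ singularities, as in the split $A_6^{(1)},A_6^{(2)},A_6^{(3)}$), and your $\sigma$-balancing of the weights in the disjoint case is exactly the paper's choice \eqref{eq:weights}. The genuine gap is in the covering step: your claim that roots $\gamma\notin\mathrm{span}(\alpha,\beta)$ keep $\inner{x}{\gamma}$ bounded away from zero near every $p^*\in L$ is false precisely on the higher-order strata of $L$, which are points \emph{of} $L$ and hence unavoidable in any cover. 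Concretely, take $R=A_3$, $\alpha=e_2-e_1$, $\beta=e_3-e_2$, and $p^*$ with $p^*_1=p^*_2=p^*_3=p^*_4$. Then $\gamma=e_4-e_3\notin\mathrm{span}(\alpha,\beta)$ vanishes at $p^*$, and its contribution to the drift of $Z$ is $(Z_u\inner{\gamma}{\alpha}+Z_v\inner{\gamma}{\beta})\,k_\gamma/\inner{x}{\gamma}=-(2u+4v)\,k_\gamma/(x_4-x_3)$, which in the Weyl chamber is nonpositive and unbounded relative to $Z$: the coordinate $x_4-x_3$ is transverse to $(u,v)$ and may vanish arbitrarily faster than $u,v$ (e.g.\ $u=v=\epsilon$, $x_4-x_3=\epsilon^2$ gives drift $\sim-6k_\gamma/\epsilon$ while $Z\sim 6\epsilon^2$). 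So no neighborhood of such $p^*$ supports your inequality $2\mu Z\geq(1+\theta)\rho$, and the supermartingale argument breaks exactly there. Nor can you excise these points by a prior induction within your framework: ruling out the deepest stratum (e.g.\ $x_1=\cdots=x_N$) requires a functional involving \emph{all} degenerating roots, not one transverse pair. This decoupling of collision orders is the real difficulty, and it is what the paper's hierarchy buys: working on $\{\tau_n<\tau_{n-1}\}$ guarantees that at the hitting time the collision has order exactly $m=M-n+1$, so the localization ball can be chosen with a \emph{constant} vanishing set $\Pzero$ containing every degenerating root (conditions \ref{eq:Pzero:inclusion}--\ref{eq:Hplus:condition} in the proof); reflection pairs are then formed inside $\Pzero$, and whatever is not cancelled by your mechanism is dominated by the repulsion term $A_5$.

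A secondary omission: your excursion argument initializes $Z^{-\theta}$ at a time when $Z>0$, so it says nothing when $x(0)\in\Cb$ with a multiple collision at time $0$ — a case the theorem's ``after the start'' formulation must cover. The paper needs a separate statement (Proposition~\ref{pr:enteringW}, a drift-cancellation induction on the polynomials $\eAc{n}$) to show the process enters $\C$ immediately; you would need the same ingredient. On the positive side, your Bessel dimension count is correct where the geometry cooperates: at points where only in-span roots vanish one gets quadratic variation $4D\sigma^2Z$ and drift tending to $2D\sigma^2+2D\sum_\gamma k_\gamma$, i.e.\ a squared Bessel process of dimension $2+2\sum_\gamma k_\gamma/\sigma^2>2$, with strict positivity of $k$ supplying exactly the margin above the critical dimension $2$ — the pairwise shadow of the paper's estimate $\overline{A_1}+\overline{A_2}+\tfrac34 A_5\leq 0$.
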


We note that this result is in contrast with those in \cite{Yuan24}, where matrix valued processes with Gaussian entries are considered, but the dynamics considered there are fractional.
Our results are applicable to matrix processes with Brownian motions as entries (see Section~\ref{sec:examples}), where, in fact, no collisions occur.
However, when matrix entries are fractional Brownian motions, there exist conditions that ensure the existence of multiple collisions with non-zero probability.

The remaining results, which are consequences of Theorem~\ref{th:multiplecollisions} are given in the following two statements.

% \jacek{General remark for Hausdorff dimension estimates: I think we should get the bounds of the form 
% \begin{equation*}
%     \dim x^{-1}(\partial W)\leq \max\Big\{0,\frac12-\inf_{\alpha\in R_+y\in W}\frac{|\alpha|^2 k_\alpha(y)}{\sum_{i=1}^N\alpha_i^2\sigma^2(y_i)}\Big\}
% \end{equation*}
% and
% \begin{equation*}
%     \dim x^{-1}(\partial W)\geq \max \Big\{0,\frac12 - \sup_{\alpha\in R_+y\in W}\frac{|\alpha|^2 k_\alpha(y)}{\sum_{i=1}^N\alpha_i^2\sigma^2(y_i)}\Big\}
% \end{equation*}
% As I checked the proofs (but maybe I missed something) these (stronger) results are quite easy to obtain just by slight changes in the proofs. However, then for $\gamma$ versions of all well-known examples for particle systems, i.e. 
% \begin{equation*}
%     k_\alpha(x) = \gamma \sum_{i=1}^N (\alpha_i^*)^2 \sigma^2(x_i)\/, \quad  \alpha \in R_+
% \end{equation*}
% we get very beautiful results, because the upper and lower bounds are the same (one with probability 1, the other one with positive probability) and
% \begin{equation*}
%     \dim x^{-1}(\partial W)= \max \Big\{0,\frac12 - \gamma\Big\}
% \end{equation*}
% }

\begin{theorem}
    Assume that \ref{ass:sigma:b:continuous}-\ref{ass:R:root system} and \ref{ass:sigma:k:positive} hold, and suppose that the ratios $b/\sigma^2$ and $k/\sigma^2$ are bounded everywhere in $\domain$ and $\Cc$. Then, the Hausdorff dimension of $x^{-1}(\partial W)$ is almost surely bounded above by
    \[\dim x^{-1}(\partial W)\leq \max\bigg\{0,\frac12-\min_{ {\alpha\in\SimplePlus}}\inf_{y\in W}\frac{|\alpha|^2k_\alpha(y)}{\Sni\alpha_i^2\sigma^2(y_i)}\bigg\}.\]
    \label{th:Hausdorffupper}
\end{theorem}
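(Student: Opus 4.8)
The plan is to combine Theorem~\ref{th:multiplecollisions} with a one‑dimensional Bessel comparison carried out wall by wall. First I would use Theorem~\ref{th:multiplecollisions} to reduce $x^{-1}(\Cb)$ to a finite union of zero sets indexed by simple roots. Indeed, if $\xta=0$ for a non‑simple positive root $\alpha=n_1\alpha_1+\dots+n_k\alpha_k$ with $\alpha_j\in\SimplePlus$, then since $x(t)\in\Cc$ forces $\langle x(t),\alpha_j\rangle\ge 0$ and all $n_j>0$, every $\langle x(t),\alpha_j\rangle$ must vanish, which is a multiple collision and is excluded almost surely by Theorem~\ref{th:multiplecollisions}. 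Hence, up to a null set, $x^{-1}(\Cb)=\bigcup_{\alpha\in\SimplePlus}Z_\alpha$ with $Z_\alpha:=\{t\in(0,\lifetime)\colon \xta=0\}$, and since $\SimplePlus$ is finite and Hausdorff dimension is stable under finite unions, it suffices to bound $\dim Z_\alpha$ for each fixed simple root $\alpha$ by $\tfrac12-c_\alpha$, where $c_\alpha:=\inf_{y\in\C}\frac{|\alpha|^2k_\alpha(y)}{\Sni\alpha_i^2\sigma^2(y_i)}$.

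Next I would analyse the scalar semimartingale $Y_\alpha(t):=\xta$. By It\^o's formula its martingale part has quadratic variation $a_\alpha(t)\,dt$ with $a_\alpha=\Sni\alpha_i^2\sigma^2(x_i)$, while its drift is $\langle b(x),\alpha\rangle+\SbR k_\beta(x)\ab/\xb$. On any compact subinterval of $(0,\lifetime)$ the path stays in a compact subset of $\Cc$, so by \ref{ass:sigma:k:positive} and continuity $a_\alpha$ is bounded above and below; the time change $\tau(t)=\int_0^t a_\alpha(s)\,ds$ is therefore locally bi‑Lipschitz and preserves Hausdorff dimension, and in the new clock $\tilde Y$ is driven by a standard Brownian motion with unit diffusion coefficient. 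Near a time of $Z_\alpha$ all factors $\xb$ with $\beta\ne\alpha$ stay bounded away from zero (again by Theorem~\ref{th:multiplecollisions}), so the only singular contribution is the $\beta=\alpha$ term, giving in the new clock a drift $|\alpha|^2k_\alpha(x)/(a_\alpha\,\tilde Y)$ whose coefficient is $\ge c_\alpha$; the boundedness hypotheses on $b/\sigma^2$ and $k/\sigma^2$ guarantee that the entire remaining drift is bounded by some constant $M$.

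I would then remove the bounded part of the drift by a Girsanov change of measure on each finite $\tau$‑horizon: since the removed drift is bounded, the resulting measure is equivalent to the original and leaves the almost‑sure Hausdorff dimension of $Z_\alpha$ unchanged. Under the new measure $\tilde Y$ solves $d\tilde Y=d\tilde W+(g/\tilde Y)\,d\tau$ with $g\ge c_\alpha$, and I would compare it pathwise, using the same driving $\tilde W$, with the Bessel process $R$ of dimension $\delta_0:=1+2c_\alpha$ solving $dR=d\tilde W+(c_\alpha/R)\,d\tau$. Because $g/v\ge c_\alpha/v$ for all $v>0$, the one‑dimensional comparison yields $\tilde Y\ge R\ge 0$, whence $\{\tilde Y=0\}\subseteq\{R=0\}$. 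For $\delta_0\in(0,2)$, i.e. $c_\alpha<\tfrac12$, the zero set of a Bessel process of dimension $\delta_0$ is the closed range of a stable subordinator of index $1-\delta_0/2$ and has Hausdorff dimension $1-\delta_0/2=\tfrac12-c_\alpha$ almost surely; for $\delta_0\ge2$, i.e. $c_\alpha\ge\tfrac12$, zero is polar and $Z_\alpha$ is empty, which accounts for the outer $\max\{0,\cdot\}$. Transporting the bound back through the bi‑Lipschitz time change and taking the maximum over $\alpha\in\SimplePlus$ gives $\dim x^{-1}(\Cb)\le\max\{0,\tfrac12-\min_{\alpha\in\SimplePlus}c_\alpha\}$.

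The main obstacle is the comparison step, for two reasons. First, the coefficient $g=|\alpha|^2k_\alpha(x)/a_\alpha$ is a genuinely path‑dependent adapted process rather than a function of $\tilde Y$ alone, so the usual autonomous comparison theorem does not apply verbatim; one must instead run the comparison through the difference $R-\tilde Y$ and its negative part, using only that the drift of $R$ is a decreasing function of $R$ and that $g\ge c_\alpha$ pathwise, and handle carefully the behaviour of both processes at the singularity. Second, the Bessel picture is only valid in a neighbourhood of each collision, so the local bounds must be glued together over the countably many excursions and over an exhaustion $(0,\lifetime)=\bigcup_n[1/n,T_n]$, invoking the countable stability of Hausdorff dimension; keeping the bi‑Lipschitz constants and the Girsanov densities under control on each piece is precisely where the boundedness assumptions on $b/\sigma^2$ and $k/\sigma^2$ are essential.
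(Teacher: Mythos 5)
Your proposal is correct in outline and shares the paper's skeleton: reduce $x^{-1}(\partial W)$ to the zero sets of projections onto simple roots, make a bi-Lipschitz time change normalizing the diffusion coefficient $\Sni\alpha_i^2\sigma^2(x_i)$, compare pathwise with a Bessel-type process whose index is governed by $\inf_{y\in W}|\alpha|^2k_\alpha(y)/\Sni\alpha_i^2\sigma^2(y_i)$, control the cross terms near collisions via Theorem~\ref{th:multiplecollisions} (the paper packages this as Corollary~\ref{cor:projectiondistance}), and glue with countable stability \ref{prop:Hausdorff:countablestability} and monotonicity \ref{prop:Hausdorff:monotonicity}. Where you genuinely diverge is in the treatment of the regular part of the drift. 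The paper works with the \emph{squared} projection $y_\beta=\xbs^2$: in its SDE \eqref{eq:sqprojtobound} every nuisance drift term comes multiplied by $\sqrt{y_\beta}$, so near a zero (where $\sqrt{y_\beta}\leq\varepsilon$ and, by Corollary~\ref{cor:projectiondistance}, the other projections exceed $\delta(\varepsilon)$) the entire nuisance drift is absorbed into an $\varepsilon$-perturbation of the dimension of a comparison squared Bessel process, and the theorem follows by letting $\varepsilon\to0$ --- no change of measure anywhere. You work with the linear projection, where the nuisance drift is additive and cannot be made small near zeros, and you remove it by Girsanov; this buys a clean Bessel comparison and a transparent appearance of the index $\frac12-c_\alpha$, at the cost of two technicalities you only partly flag. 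First, your sentence about removing the drift ``on each finite $\tau$-horizon: since the removed drift is bounded'' is wrong as written: the terms $k_\beta(x)\ab/\xb$ with $\beta\neq\alpha$ are \emph{not} bounded on a whole horizon, since other projections do vanish there; the change of measure must be performed on the localized stochastic intervals where the other projections exceed a positive threshold --- exactly the countably many intervals your final paragraph introduces for gluing, so this is a repairable imprecision rather than a fatal gap, but it should be part of the Girsanov step itself. Second, the non-autonomous comparison through the singularity is indeed the delicate point, and your remedy is the right one: with a common driving noise, $R-\tilde Y$ has finite variation and its positive part is nonincreasing because $c_\alpha/r$ is decreasing and $g\geq c_\alpha$. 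Finally, note that your reduction to simple roots does not actually need Theorem~\ref{th:multiplecollisions}: since $\xa=\sum_{\gamma\in\SimplePlus}c_\gamma\xc$ with $c_\gamma\geq0$ and each summand nonnegative on $\Cc$ by \ref{prop:simpleroots:linearcombination}, the zero set of a non-simple positive root is \emph{deterministically} contained in the zero sets of simple roots, which is the paper's (even simpler) observation via the ordering $\beta\leq\alpha\Rightarrow\xb\leq\xa$; Theorem~\ref{th:multiplecollisions} is genuinely needed only for the separation of the remaining projections near a collision.
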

\begin{theorem}
    Assume that \ref{ass:sigma:b:continuous}-\ref{ass:R:root system} and \ref{ass:sigma:k:positive}-\ref{ass:k:monotonicty} hold, and suppose that the ratios $b/\sigma^2$ and $k/\sigma^2$ are bounded everywhere in $\domain$ and $\Cc$. Then, the Hausdorff dimension of $x^{-1}(\partial W)$ is almost surely bounded below by
        \[\dim x^{-1}(\partial W)\geq \max \bigg\{0,\frac12 - \min_{\alpha\in \Delta_+}\sup_{y\in W}\frac{|\alpha|^2k_\alpha(y)}{\Sni\alpha_i^2\sigma^2(y_i)}\bigg\}.\]
    \label{th:Hausdorfflower}
\end{theorem}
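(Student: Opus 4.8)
The plan is to reduce the statement to a one-dimensional problem for each simple root and then to compare the resulting diffusion with a squared Bessel process, whose zero set has a classically known Hausdorff dimension. Since the walls of the Weyl chamber are exactly the hyperplanes orthogonal to the simple roots, we have $x^{-1}(\Cb)=\bigcup_{\alpha\in\SimplePlus}Z_\alpha$ with $Z_\alpha:=\{t\in(0,\lifetime):\xta=0\}$, and by finite stability of Hausdorff dimension $\dim x^{-1}(\Cb)=\max_{\alpha\in\SimplePlus}\dim Z_\alpha$. It therefore suffices to bound $\dim Z_\alpha$ from below for the single simple root realizing $\min_{\alpha\in\SimplePlus}\sup_{y\in\C}\frac{|\alpha|^2k_\alpha(y)}{\Sni\alpha_i^2\sigma^2(y_i)}$. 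Moreover, Theorem~\ref{th:multiplecollisions} guarantees that every point of $Z_\alpha$ is a simple collision, so on a random time interval around any such point all other separations $\langle x,\gamma\rangle$, $\gamma\neq\alpha$, stay bounded away from $0$; this localizes the analysis to the single singular term associated with $\alpha$.

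For a fixed simple root $\alpha$, following the paper's use of squared separations I would apply It\^o's formula to $U:=\xa^2$, which removes the $1/\xa$ singularity. Writing $dM:=\Sni\alpha_i\sigma(x_i)\,dB_i$ with $d\langle M\rangle=a\,dt$, $a:=\Sni\alpha_i^2\sigma^2(x_i)$, one obtains
\[ dU = 2\xa\,dM + \Big(a + 2|\alpha|^2 k_\alpha(x) + 2\langle b(x),\alpha\rangle\,\xa + 2\xa\!\!\sum_{\gamma\in R_+\setminus\{\alpha\}}\!\! k_\gamma(x)\frac{\langle\gamma,\alpha\rangle}{\langle x,\gamma\rangle}\Big)\,dt. \]
After the Dambis--Dubins--Schwarz time change $\tau=\int_0^{\cdot}a\,ds$, which is bi-Lipschitz on the localization region (because $\sigma>0$ is continuous and $b/\sigma^2,k/\sigma^2$ are bounded) and hence preserves Hausdorff dimension, $U$ becomes a squared Bessel-type process with quadratic variation $4U\,d\tau$ and leading drift constant $1+2\mu_\alpha$, where $\mu_\alpha(y):=\frac{|\alpha|^2k_\alpha(y)}{\Sni\alpha_i^2\sigma^2(y_i)}$. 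The two remaining drift contributions both carry a factor $\xa=\sqrt U$, so they vanish at the wall and do not affect the boundary index; near $U=0$ the effective squared-Bessel dimension is exactly $1+2\mu_\alpha$.

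Here is where assumptions \ref{ass:b:monotone} and \ref{ass:k:monotonicty} — absent from the upper bound Theorem~\ref{th:Hausdorffupper} — enter: \ref{ass:b:monotone} gives $\langle b(x),\alpha\rangle=\Sni\alpha_i b(x_i)\le 0$ for simple $\alpha$, and \ref{ass:k:monotonicty} controls the cross-repulsion sum, so that the full drift of $U$ is dominated, on excursions below a small level, by that of a squared Bessel process of dimension $\delta=1+2\bar\mu_\alpha+\eta$, where $\bar\mu_\alpha:=\sup_{y\in\C}\mu_\alpha(y)$ and $\eta>0$ absorbs the bounded $O(\sqrt U)$ perturbation. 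Coupling through the same Brownian motion and invoking the comparison theorem for squared Bessel processes yields a pathwise bound $0\le U\le\widetilde U$ with $\widetilde U$ a time-changed squared Bessel process of dimension $\delta$, whence $\{\widetilde U=0\}\subseteq Z_\alpha$. When $\bar\mu_\alpha<\tfrac12$ we may take $\delta<2$, so $\widetilde U$ reaches $0$ and, by the classical result that the inverse local time at $0$ is a stable subordinator of index $1-\delta/2$, its zero set has dimension $1-\delta/2=\tfrac12-\bar\mu_\alpha-\tfrac{\eta}{2}$; letting $\eta\downarrow 0$ and recalling the bi-Lipschitz time change gives $\dim Z_\alpha\ge\tfrac12-\bar\mu_\alpha$. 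When $\bar\mu_\alpha\ge\tfrac12$ the asserted bound is $0$ and holds trivially. Taking the maximum over $\alpha\in\SimplePlus$ together with $\dim\ge 0$ yields the claim.

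The main obstacle is making the squared Bessel comparison rigorous on macroscopic time intervals rather than merely infinitesimally: one must verify that \ref{ass:b:monotone} and \ref{ass:k:monotonicty}, combined with the boundedness of $b/\sigma^2$ and $k/\sigma^2$, produce a genuine one-sided domination of the drift of $U$ and, crucially, guarantee that the process actually reaches and repeatedly returns to the $\alpha$-wall — for a lower bound one must ensure that collisions genuinely occur and recur, whereas the upper bound only needs to control an already-small set. Secondary technical points are the uniform bi-Lipschitz property of the time change on the localization region, and checking that the inclusion $\{\widetilde U=0\}\subseteq Z_\alpha$ transfers the full Bessel dimension back to the original clock after the limit $\eta\downarrow 0$.
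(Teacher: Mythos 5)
Your overall template --- squared projection onto a simple root, bi-Lipschitz time change, comparison with a squared Bessel process, and the known dimension of its zero set --- matches the paper's. But at the decisive step you diverge, and the divergence is a genuine gap. The paper's proof of Theorem~\ref{th:Hausdorfflower} does not treat the cross-repulsion sum as a bounded $O(\sqrt U)$ perturbation to be absorbed into an auxiliary $\eta>0$ on excursions below a small level. Instead it proves that this sum is \emph{pointwise non-positive everywhere} in the chamber: pairing each $\alpha\neq\beta$ with $\ab\neq 0$ against its reflection $\gamma=\reflect{\beta}\alpha$ (the set $R_+(\beta)$ of \eqref{eq:Rplus}), using $\inner{\gamma^*}{\beta^*}=-\inner{\alpha^*}{\beta^*}$ and the partial root ordering ($\gamma^*\leq\alpha^*$ when $\abs>0$, and conversely), assumption \ref{ass:k:monotonicty} gives
\begin{equation*}
\frac{k_\alpha(x)}{\xas}\inner{\alpha^*}{\beta^*}+\frac{k_{\gamma}(x)}{\xcs}\inner{\gamma^*}{\beta^*}\leq 0
\end{equation*}
for every such pair, and \ref{ass:b:monotone} gives $\Sni\beta_i^* b(x_i)\leq 0$ for simple $\beta$. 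Dropping these non-positive drift terms from \eqref{eq:sqprojtobound} yields a \emph{global} domination $y_\beta\leq \hat z$ by a squared Bessel process of dimension exactly $2\hat\eta_\beta+1$, with no $\eta$, no localization near the wall, and no appeal to Theorem~\ref{th:multiplecollisions} in this proof at all. Global domination is what delivers both halves of the almost-sure statement at once: $\hat z$ hits zero a.s.\ (so collisions genuinely occur and recur, since $\hat z^{-1}(0)\subseteq y_\beta^{-1}(0)$ on all of time), and its zero set has the stated dimension a.s.

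Your perturbative scheme cannot close this. You correctly identify the problem yourself in your final paragraph: with domination valid only on excursions below a small level near an assumed collision, you cannot show that $x$ reaches and returns to the $\alpha$-wall, so you obtain at best the positive-probability conclusion --- which is precisely the content of the paper's weaker Lemma~\ref{th:Hausdorfflowerweak}, proved exactly by your localization-with-error-terms method (there with $\varepsilon$, $\delta(\varepsilon)$, and Corollary~\ref{cor:projectiondistance}). The whole point of adding \ref{ass:b:monotone} and \ref{ass:k:monotonicty} in Theorem~\ref{th:Hausdorfflower} is to replace that local, signed-error comparison by the exact sign computation above; saying ``\ref{ass:k:monotonicty} controls the cross-repulsion sum'' without the reflection-pairing computation skips the one idea that upgrades positive probability to almost surely. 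A secondary inaccuracy: your heuristic that drift terms carrying a factor $\sqrt U$ ``vanish at the wall and do not affect the boundary index'' governs local behavior near an existing zero, but the existence of zeros at all depends on the global drift, which is exactly where your argument is silent.
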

Note that Theorem~\ref{th:Hausdorfflower} indicates the conditions for almost-sure existence of collisions. A weaker, positive probability form of this last statement can be proved without the assumptions \ref{ass:b:monotone} and \ref{ass:k:monotonicty}.
\begin{lemma}
    Assume that \ref{ass:sigma:b:continuous}-\ref{ass:R:root system} and \ref{ass:sigma:k:positive} hold, and suppose that the ratios $b/\sigma^2$ and $k/\sigma^2$ are bounded everywhere in $\domain$ and $\Cc$. Then, the Hausdorff dimension of $x^{-1}(\partial W)$ is bounded below by
        \[\dim x^{-1}(\partial W)\geq \max \bigg\{0,\frac12 - \max_{\alpha\in \Delta_+}\sup_{y\in W}\frac{|\alpha|^2k_\alpha(y)}{\Sni\alpha_i^2\sigma^2(y_i)}\bigg\}\]
        with positive probability. Otherwise we have the trivial bound $\dim x^{-1}(\partial W)\geq 0$.
    \label{th:Hausdorfflowerweak}
\end{lemma}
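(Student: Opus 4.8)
The plan is to reuse the one-wall Bessel comparison behind Theorem~\ref{th:Hausdorfflower}, but to run it for a single separation coordinate at a time and to replace the almost-sure occurrence of a collision, which there rests on \ref{ass:b:monotone} and \ref{ass:k:monotonicty}, by a positive-probability argument. Since $x(t)\in\Cc$ for all $t$, one has $x(t)\in\Cb$ exactly when $\xta=0$ for some simple root $\alpha\in\SimplePlus$, so $x^{-1}(\Cb)=\bigcup_{\alpha\in\SimplePlus}Z_\alpha$ with $Z_\alpha:=\{t\in(0,\lifetime):\xta=0\}$. As the Hausdorff dimension of a finite union is the maximum of the dimensions, it suffices to produce, for a suitable simple root, a lower bound on $\dim Z_\alpha$ on an event of positive probability.

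First I would apply It\^o's formula to the scalar process $Y_\alpha(t):=\xta$. Its martingale part has quadratic variation $\Sni\alpha_i^2\sigma^2(x_i)\,dt$, while its drift splits into the singular self-term $|\alpha|^2k_\alpha(x)/Y_\alpha$, the regular drift $\Sni\alpha_i b(x_i)$, and the cross-terms $\SbRna k_\beta(x)\ab/\xtb$. Changing the clock by $\Sni\alpha_i^2\sigma^2(x_i)\,dt$ turns the martingale part into a standard Brownian motion and rescales the self-term to $\nu_\alpha(x)/Y_\alpha$, where $\nu_\alpha(y):=|\alpha|^2k_\alpha(y)/\Sni\alpha_i^2\sigma^2(y_i)$; this is precisely the drift of a Bessel process of effective dimension $1+2\nu_\alpha(y)$. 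Using that $b/\sigma^2$ and $k/\sigma^2$ are bounded, I would absorb the regular drift and (after the localisation below) the cross-terms by a Girsanov change of measure, under which the law of $x$ is locally equivalent; measure equivalence preserves both positive-probability events and the Hausdorff dimension of $Z_\alpha$, and since $\sigma$ is continuous and strictly positive by \ref{ass:sigma:k:positive} the clock is bi-Lipschitz on the compacts involved, so the time change preserves dimension as well.

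Next I would carry out the comparison in the direction that yields a lower bound. Bounding $\nu_\alpha(x)\le\sup_{y\in\C}\nu_\alpha(y)$ enlarges the rescaled self-drift, so by the one-dimensional comparison theorem the (time-changed, Girsanov-transformed) separation is dominated by a Bessel process $R$ of dimension $1+2\sup_{y\in\C}\nu_\alpha(y)$; as $0\le Y_\alpha\le R$, the inclusion $\{R=0\}\subseteq Z_\alpha$ gives $\dim Z_\alpha\ge 1-\tfrac12(1+2\sup_{y\in\C}\nu_\alpha(y))=\tfrac12-\sup_{y\in\C}\nu_\alpha(y)$ whenever $R$ reaches $0$. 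To guarantee that a collision happens at all, I would compare the minimal separation $\min_{\alpha\in\SimplePlus}Y_\alpha$ near $\Cb$ --- whose additional drift from the minimum is nonpositive and which therefore reaches $0$ at least as readily as each coordinate --- to a single subcritical Bessel process of the worst-case dimension $1+2\max_{\alpha\in\SimplePlus}\sup_{y\in\C}\nu_\alpha(y)$: when this number is strictly below $2$, i.e. $\max_{\alpha\in\SimplePlus}\sup_{y\in\C}\nu_\alpha(y)<\tfrac12$, the comparison process hits $0$ with positive probability, so $x$ reaches $\Cb$ with positive probability, and on that event, whatever simple wall $\alpha$ is reached, the previous estimate yields $\dim x^{-1}(\Cb)\ge\dim Z_\alpha\ge\tfrac12-\sup_{y\in\C}\nu_\alpha(y)\ge\tfrac12-\max_{\alpha\in\SimplePlus}\sup_{y\in\C}\nu_\alpha(y)$. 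When instead $\max_{\alpha\in\SimplePlus}\sup_{y\in\C}\nu_\alpha(y)\ge\tfrac12$ the worst-case process is critical or supercritical and need not reach $0$, the asserted quantity is nonpositive, and only $\dim x^{-1}(\Cb)\ge0$ is claimed; the outer $\max\{0,\cdot\}$ records this dichotomy.

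The main obstacle is the control of the cross-terms $\SbRna k_\beta(x)\ab/\xtb$ near the $\alpha$-wall once \ref{ass:b:monotone} and \ref{ass:k:monotonicty} are unavailable. Theorem~\ref{th:multiplecollisions} ensures that every $\xtb$ with $\beta\ne\alpha$ is strictly positive at any instant when $Y_\alpha=0$, but it does not keep them uniformly away from $0$, so these singular drifts are not globally bounded and the Girsanov density is not a priori admissible. I would resolve this by localising to a stopping time up to which each $\xtb$ ($\beta\ne\alpha$) stays above a fixed level while $Y_\alpha$ is already small --- an event of positive probability by continuity of $x$ and nondegeneracy of the driving noise --- so that on it the cross drift is bounded and the Girsanov and Bessel steps apply. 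It is exactly this recourse to a positive-probability localising event, in place of the almost-sure control that \ref{ass:b:monotone} and \ref{ass:k:monotonicty} would supply, that both weakens the conclusion to positive probability and forces the uniform worst-case dimension, hence the appearance of $\max$ rather than $\min$ over $\SimplePlus$.
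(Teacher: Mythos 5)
Your main line---project onto a single simple root, time-change by $\Sni\alpha_i^2\sigma^2(x_i)$, compare with a Bessel process whose index is built from $\sup_{y\in\C}|\alpha|^2k_\alpha(y)/\Sni\alpha_i^2\sigma^2(y_i)$, localize near a simple collision (via Theorem~\ref{th:multiplecollisions}, i.e.\ Corollary~\ref{cor:projectiondistance}) so that the cross-terms $\SbRna k_\beta(x)\ab/\xb$ are controlled, and take the worst case over $\SimplePlus$---is essentially the paper's proof. Working with the projection $\xa$ and a Girsanov removal of the regular and cross drifts, instead of the paper's squared projection $y_\beta=\xbs^2$ with direct drift estimates (the constants $c_R\hat b$ and $\tilde\eta_\alpha$, with errors of size $\varepsilon$ and $\varepsilon/\delta(\varepsilon)$ absorbed into the Bessel index and sent to zero), is a legitimate cosmetic variant: an equivalent change of measure up to the localizing stopping time preserves both positive-probability events and almost-sure statements about the dimension of the zero set, and your index bookkeeping ($d=1+2\nu_\alpha$, zero-set dimension $1/2-\nu_\alpha$) matches the paper's.

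The genuine gap is the step by which you guarantee that a collision occurs at all: the comparison of the minimal separation $\min_{\alpha\in\SimplePlus}\xta$ with a single subcritical Bessel process of dimension $1+2\max_\alpha\sup_y\nu_\alpha(y)$. The Tanaka local-time contribution to the minimum is indeed nonpositive, but the drift of the currently minimal coordinate is \emph{not} dominated by $\max_\alpha\sup\nu_\alpha$ divided by the minimum: it contains the cross-terms $k_\beta(x)\ab/\xb$ over all $\beta\in R_+$, and near a corner of $\Cc$, where several projections are simultaneously small---which Theorem~\ref{th:multiplecollisions} does not exclude (it rules out simultaneous \emph{zeros}, not simultaneous smallness)---the positive ones blow up at the same rate as the reciprocal of the minimum. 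Concretely, for $R=A_{N-1}$ with both $\inner{x}{e_2-e_1}$ and $\inner{x}{e_3-e_2}$ of size $\epsilon$, the root $\beta=e_3-e_1$ satisfies $\inner{e_2-e_1}{\beta}=1$ and $\xb\approx2\epsilon$, so the minimal coordinate receives a positive drift of order $1/\epsilon$; the effective dimension of any honest comparison process then exceeds $1+2\max_\alpha\sup\nu_\alpha$ and subcriticality can fail even when every $\nu_\alpha<1/2$, so ``reaches $0$ at least as readily as each coordinate'' is unjustified. Note that the paper sidesteps this entirely: it never proves that collisions occur; it \emph{conditions} on the event that a collision happens (hence the ``with positive probability'' and the trivial bound otherwise), pointing to the possibly positive coefficient of $\sqrt{y_\beta}\,ds$ in \eqref{eq:sqprojtobound} and to \cite{GoeingJaeshckeYor03}. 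If you do want to produce a collision with positive probability, your own localization paragraph contains the right repair: steer the process, by a Girsanov argument on a region where all projections except $\xa$ are bounded below (so the drift is bounded and the diffusion nondegenerate), into a one-wall neighborhood, and there the already-localized one-coordinate comparison with index below $1/2$ hits zero before exiting with positive probability---no minimum process is needed.
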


The paper is structured as follows.
We begin by recalling several notions and setting notations necessary for our results in Section~\ref{sec:prelim}.
The proof of Theorem~\ref{th:multiplecollisions} is given in a series of propositions detailed in Section~\ref{sec:collisions}.
The proofs of Theorems~\ref{th:Hausdorffupper} and \ref{th:Hausdorfflower}, and Lemma~\ref{th:Hausdorfflowerweak} are given in Section~\ref{sec:dimensionbounds}.
We end the paper with several examples of particle systems for which our results are applicable in Section~\ref{sec:examples}.

\section{Setting, definitions, and properties}\label{sec:prelim}

\subsection{Reflections, root systems, and Weyl chambers}
\label{sec:roots}
To understand the general particle system, we first introduce the key concepts that occur in \eqref{eq:x:SDE} and \eqref{eq:x:SDE:mult}. For any nonzero vectors $y$ and $z\in\R^N$, we define the \emph{reflection operator} $\reflect{y}$ acting on $z$ by the formula
\[
\reflect{y} z := z - 2\frac{\yz}{\yy}y,
\]
which yields the reflection of the vector $z$ along the hyperplane of $y$. We will use the notation $|y|=\sqrt{\inner{y}{y}}$ for the Euclidean vector norm of $y\in\R^N$, which is naturally reduced to the absolute value function if $y$ is a scalar. A \emph{root system} $R$ is then defined as a finite set of nonzero vectors, called \emph{roots}, satisfying the condition that for any $\alpha, \beta \in R$, the reflection $\reflect\alpha \beta$ also belongs to $R$; in other words, the reflection of any root along another root is again a root. Furthermore, we require that the root system be \emph{reduced}, that is, for every $\alpha\in R$ we have $\R \alpha\cap R =\{\pm\alpha\}$. The reflections generated by the roots in \( R \) form a \textit{reflection group}, denoted by \( G \).

Some structural properties of reduced root systems will play an important role in the upcoming sections. A \emph{positive subsystem} \( R_+ \subset R \) can be selected by choosing an arbitrary vector \( u \in \R^N \) such that \( \inner{\alpha}{u} \neq 0 \) for all \( \alpha \in R \). This choice induces a decomposition of \( R \) into positive and negative roots, where
\[
R_+ := \{ \alpha \in R \mid \inner{\alpha}{u} > 0 \}.
\]
The set of negative roots is given by the \emph{negative subsystem} \( -R_+ \). In the following, we omit the dependence on \( R \). For a fixed positive subsystem \( R_+ \), the associated \emph{Weyl chamber} \( \C\) is
\[
\C := \left\{ y \in \R^N \,\middle|\, \inner{\alpha}{y} > 0 \ \text{for all } \alpha \in R_+ \right\}.
\]
% \nicole{Associated with the root system \( R \) is a \emph{multiplicity function} \( k: R\times \Cc \to \R \). We refer to its values as \emph{multiplicities}. - This part needs to changed in the end.} 
%This function is invariant under the action of $G$ on the root argument, namely $k_{\reflect{\alpha}\beta}(x)=k_{\beta}(x)$ for every $\alpha,\beta\in R$.

We denote by $\SimplePlus$ the \emph{simple system} corresponding to \( R_+ \), consisting of a set of \emph{simple roots} that forms a basis for the positive subsystem. We state a few properties for the simple roots: 
\begin{enumerate}[label=(S\arabic*), leftmargin=2cm]
    \item \label{prop:simpleroots:linearcombination}
    For every root $\alpha\in R_+$ there exists unique $c_\gamma\geq 0$ such that 
    \begin{align*}
    \alpha = \sum_{\gamma \in \SimplePlus} c_\gamma \gamma.
    \end{align*}
    Every root in \( -R_+ \) is a linear combination of simple roots with non-positive coefficients.
    \item \label{prop:simpleroots:scalarproduct}For every $\alpha,\beta\in\SimplePlus$ with $\alpha\not= \beta$ we have $\ab\leq 0$.
\end{enumerate}
The Properties \ref{prop:simpleroots:linearcombination} and \ref{prop:simpleroots:scalarproduct} can be found in \cite[Theorem, p.~8; Corollary, p.~9]{Humphreys}.

%%%%%%%%%%%%%%%%%%%%%%%%%%%%%%%%%%%%%%%%%%%%%%%%%%%%%%%%%%%%%%%%%%%%%%%%%%%%%%

\subsection{Symmetric polynomials}

For a given vector of variables $A=(a_1,\ldots,a_N)$, we denote by $e_n(A)$ the basic symmetric polynomial in $A$, where $n=1,\ldots,N$. More precisely, we have

$$
	e_n(A) := \sum_{i_1<\ldots<i_n} a_{i_1}\cdot\ldots\cdot a_{i_n}\/,\quad n=1,\ldots,N\/.
$$
In particular, $e_1(A) = a_1+\ldots+a_N$ and $e_{N}(A) = a_1\cdot\ldots\cdot a_N$. For completeness, we set $e_0(A)\equiv 1$ and $e_{-1}(A)\equiv 0$. Moreover, for any fixed collection $a_{j_1}, \ldots, a_{j_k}$ of entries of $A$, we denote by
$$
	e^{\overline{a_{j_1}},\ldots,\overline{a_{j_k}}}_n(A) :=  \sum_{\stackrel{i_1<\ldots<i_n}{i_l\neq j_s}} a_{i_1}\cdot\ldots\cdot a_{i_n}\/,
$$
which is a sum of all products of length $n$ in which there are no elements of $\{a_{j_1}, \ldots, a_{j_k}\}$.

\medskip

\noindent The primary tools for studying the collisions of the process $x$ with the boundary of the Weyl chamber are the symmetric polynomials in $\xa$ for $\aR$. Therefore, we introduce the following notation.
$$
	\Aaclear := \{\xa^2:\aR\}\/.
$$
Moreover, for a given set of positive numbers, which we will call weights
$$
	w := \{w_\alpha>0: \aR\}
$$
we write $\alpha^*$ for a root normalized by $w_\alpha$, that is, $\alpha^* = \alpha/w_\alpha$ and consider the corresponding set 
$$
	\Aa := \{\xas^2:\aR\}\/.
$$
We denote by $M$ the number of elements in $\Aa$, which is obviously independent of weight, and is just the size of $R_+$. For example, for the $\AN$ root system we have $M=N(N-1)/2$.

We take into consideration the corresponding symmetric polynomials in $\Aa$. We also simplify the notation and write $\eaA{n}$ for the symmetric polynomial of degree $n$ which does not include $\xas^2$. We write similarly $\eabA{n}$ whenever we want to exclude $\xas^2$ and $\xbs^2$. 

\bigskip

We end this subsection with two technical lemmas that provide algebraic formulas used in what follows. 

\begin{lemma}
For every $n = 1,\ldots, M$ and every $\alpha,\beta\in R_+$ such that $\alpha\neq \beta$ we have
\begin{equation}
	\label{eq:e:form2}
	\eabA{n-2}\eA{n} = \eaA{n-1}\ebA{n-1}+\eabA{n}\eabA{n-2}-\big(\eabA{n-1}\big)^2.
\end{equation}
\end{lemma}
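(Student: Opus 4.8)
The identity is purely algebraic: it is a two–variable deletion formula for elementary symmetric polynomials, so the plan is to reduce both sides to a common expression by repeatedly applying the one–variable splitting recursion. Write $u := \xas^2$ and $v := \xbs^2$ for the two distinguished variables, which are genuinely distinct entries of $\Aa$ because $\alpha\neq\beta$, and abbreviate $E_k := \eabA{k}$ for the elementary symmetric polynomial of degree $k$ in the remaining $M-2$ variables. The conventions $e_0\equiv 1$ and $e_{-1}\equiv 0$ introduced above make every recursion below valid at the boundary values of the index.

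First I would record the basic splitting rule: for any finite set $S$ of variables and any single variable $a\in S$,
\[
e_n(S) = e_n(S\setminus\{a\}) + a\, e_{n-1}(S\setminus\{a\}),
\]
obtained by partitioning the degree-$n$ monomials according to whether or not they contain $a$. Applying this with $a=v$ to the variable set that excludes $u$, and with $a=u$ to the set that excludes $v$, gives
\[
\eaA{n-1} = E_{n-1} + v\,E_{n-2},\qquad \ebA{n-1} = E_{n-1} + u\,E_{n-2},
\]
while applying it twice to the full set (deleting first $u$, then $v$) yields
\[
\eA{n} = E_n + (u+v)\,E_{n-1} + uv\,E_{n-2}.
\]

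With these expansions in hand, the remaining step is to substitute into both sides and collect terms in $E_{n-2},E_{n-1},E_n$. A short computation turns the left-hand side into $E_{n-2}E_n + (u+v)E_{n-1}E_{n-2} + uv\,E_{n-2}^2$, whereas the right-hand side, using $\eabA{n}=E_n$ and $\eabA{n-1}=E_{n-1}$, becomes
\[
(E_{n-1}+v\,E_{n-2})(E_{n-1}+u\,E_{n-2}) + E_n E_{n-2} - E_{n-1}^2 .
\]
Expanding the product produces an $E_{n-1}^2$ term that cancels against the subtracted square, leaving exactly the left-hand side. Since the argument is a direct computation, there is no substantive obstacle; the only point to watch is consistency of the index conventions so that the formula remains correct near $n=1$ and $n=M$, where some $E_k$ vanish. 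Here the equality $|R_+\setminus\{\alpha,\beta\}| = M-2$, guaranteed by $\alpha\neq\beta$, together with $e_0\equiv 1$ and $e_{-1}\equiv 0$, covers all cases $n=1,\ldots,M$.
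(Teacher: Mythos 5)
Your proof is correct and follows essentially the same route as the paper's: both rest on the one-variable splitting recursion $e_n(S)=e_n(S\setminus\{a\})+a\,e_{n-1}(S\setminus\{a\})$ applied to the deleted variables $\xas^2$ and $\xbs^2$, with the conventions $e_0\equiv 1$, $e_{-1}\equiv 0$ handling the boundary indices. The only difference is presentational — you expand both sides fully in the doubly-deleted basis $E_k=\eabA{k}$ and compare, whereas the paper substitutes incrementally — but the underlying computation is identical.
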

\begin{proof}
    We have for every $\alpha\neq \beta$ that
    \begin{equation*}
        \eA{n} = \xas^2 \eaA{n-1}+\eaA{n}\/,\quad \ebA{n-1} = \xas^2\eabA{n-2}+\eabA{n-1}\/,
    \end{equation*}
    which gives
    \begin{align*}
        \eabA{n-2}\eA{n} &= \eabA{n-2}\left( \xas^2 \eaA{n-1}+\eaA{n}\right)\\
        &=  \xas^2\eabA{n-2}\eaA{n-1}+\eabA{n-2}\eaA{n}\\
        &= \left(\ebA{n-1}-\eabA{n-1}\right)\eaA{n-1}+\eabA{n-2}\eaA{n}\\
        &= \eaA{n-1}\ebA{n-1}+\eabA{n-2}\eaA{n} - \eabA{n-1}\eaA{n-1}.
    \end{align*}
    Since $\eaA{n} = \xbs^2\eabA{n-1}+\eabA{n}$ and $\eaA{n-1} = \xbs^2\eabA{n-2}+\eabA{n-1}$ we finally get
    \begin{align*}
        \eabA{n-2}\eaA{n} - \eabA{n-1}\eaA{n-1} &= \eabA{n-2}\left(\xbs^2\eabA{n-1}+\eabA{n}\right)- \eabA{n-1}\eaA{n-1}\\
       &= \eabA{n-2}\eabA{n} - \eabA{n-1}\left(\eaA{n-1}-\xbs^2\eabA{n-2}\right)\\
       &= \eabA{n-2}\eabA{n}- \big( \eabA{n-1}\big)^2.
    \end{align*}

\end{proof}
\begin{lemma}
\label{prop:AP:alfagamma}
   If $\ab\neq 0$ and $\alpha\neq\beta$, denote by $\gamma$ one of $\pm\reflect{\beta}\alpha=\pm(\alpha-2\beta\ab/|\beta|^2)$, which is in $R_+$. Then, we have
	\begin{equation}
        \label{eq:ac:form1}
            \ab\xa+\bc\xc = \frac{2\xb}{|\beta|^2}
	\end{equation}
    and
    	\begin{equation}
        \label{eq:ac:form2}
            \ab\xc+\bc\xa = \frac{2\ab\bc\xb}{|\beta|^2}
	\end{equation}
\end{lemma}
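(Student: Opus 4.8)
The plan is to compute both sides directly from the definition of the reflected root and reduce each identity to an elementary inner-product manipulation. Writing $\gamma = \epsilon\,\reflect{\beta}\alpha$ for the sign $\epsilon\in\{-1,+1\}$ that places $\gamma$ in $R_+$, the reflection formula gives $\gamma = \epsilon\bigl(\alpha - \tfrac{2\ab}{|\beta|^2}\beta\bigr)$. From this I would extract the two quantities feeding into the target identities: pairing with $x$ yields $\xc = \epsilon\bigl(\xa - \tfrac{2\ab}{|\beta|^2}\xb\bigr)$, while pairing with $\beta$ and using $\inner{\beta}{\beta} = |\beta|^2$ gives the key relation $\bc = -\epsilon\,\ab$.

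With these two expressions in hand, both identities reduce to substitution. For \eqref{eq:ac:form2} I would expand $\ab\xc + \bc\xa$; the two terms proportional to $\xa$ cancel, leaving $-\tfrac{2\epsilon\ab^2}{|\beta|^2}\xb$, which is precisely $\tfrac{2\ab\bc}{|\beta|^2}\xb$ once one notes $\ab\bc = -\epsilon\ab^2$. This identity is a formal consequence of the reflection alone and requires no special structure on $R$. For \eqref{eq:ac:form1} the same substitution yields $\ab\xa + \bc\xc = \tfrac{2\ab^2}{|\beta|^2}\xb$, and here I would invoke \ref{ass:R:root system}, which forces $\ab = \pm1$ whenever $\ab\neq0$; hence $\ab^2 = 1$ and the right-hand side collapses to the stated $\tfrac{2}{|\beta|^2}\xb$.

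The one point I would flag is that $\gamma$ is only pinned down up to being the positive representative of $\pm\reflect{\beta}\alpha$, so the identities must be checked to be independent of the sign $\epsilon$. This is automatic: $\epsilon$ enters only through $\epsilon^2 = 1$, cancelling outright in \eqref{eq:ac:form1} and appearing symmetrically in \eqref{eq:ac:form2}. Beyond this sign bookkeeping, and the observation that the crystallographic normalization $\ab^2 = 1$ is exactly what cleans up \eqref{eq:ac:form1}, the argument is entirely mechanical, so I anticipate no real obstacle.
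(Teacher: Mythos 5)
Your proof is correct and follows essentially the same route as the paper: compute $\bc=-\epsilon\ab$ and $\xc$ from the reflection formula, substitute into both identities, and invoke $\ab^2=1$ from \ref{ass:R:root system} for \eqref{eq:ac:form1}. The only cosmetic difference is that you carry the sign $\epsilon$ explicitly through the computation, whereas the paper disposes of it up front by noting both identities are invariant under $\gamma\mapsto-\gamma$ and then assuming $\gamma=\reflect{\beta}\alpha$.
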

\begin{proof}
   Since the expression $\bc\xc$ is the same for $\gamma$ and $-\gamma$ and the equality \ref{eq:ac:form2} is invariant under the sign change of $\gamma$, we can assume that 
	\begin{equation*}
	  \gamma = \reflect{\beta}\alpha= \alpha-\frac{2\beta\ab}{|\beta|^2}\/,
	\end{equation*}
	which leads to $\bc=\ab-2\ab=-\ab$ and hence
    \begin{align*}
        \bc\xc &= -\ab\left(\xa-\frac{2\xb\ab}{|\beta|^2}\right) = -\ab\xa + \frac{2\ab^2\xb}{|\beta|^2}\/,\\  
        \ab\xc &= \ab \left(\xa-\frac{2\xb\ab}{|\beta|^2}\right) = -\bc\xa + \frac{2\ab\bc\xb}{|\beta|^2}
    \end{align*}
     and the results follow from $\ab^2=1$, see \ref{ass:R:root system}.
\end{proof}

\subsection{Hausdorff dimension}
The Hausdorff dimension generalizes the familiar notion of dimension. This means that well-known geometric objects like straight lines, hyperplanes and others with intuitive dimensionality keep the same dimension. The Hausdorff dimension offers a finer distinction, since it admits positive real numbers. 
	
	We denote by $B(y,r):=\{z\in \R^N:|y-z|\leq r\}$ the closed $N$-dimensional ball centered at $y$ with radius $r$. For the monotonically increasing monomial (on the positive halfline) of power $\varkappa\geq 0 $ the Hausdorff measure is specified by 
	\begin{align*}
		m_\varkappa(E):=\lim\limits_{\varepsilon\to 0} \inf\bigg\{\sum_{i=1}^\infty (2r_i)^\varkappa\cond{\big}\exists\ 0\leq r_i<\varepsilon,\, y_i\in \R^n: E\subset \bigcup\limits_{i=1}^\infty B(y_i,r_i)\bigg\}.
	\end{align*}
	The Hausdorff dimension is defined by the following lemma, see \cite[8.1 Hausdorff dimension]{Adler1981}. 
	\begin{lemma}
		\label{lm:Hausdorff_dimension}
		For any set $E\subset \mathbb{R}^n$ there exists a unique number $d\in[0,n]$, called the Hausdorff dimension of $E$, for which 
		\begin{align*}
			\varkappa <d\Rightarrow m_\varkappa(E)=\infty, \quad \varkappa >d\Rightarrow m_\varkappa(E)=0. 
		\end{align*}
		This number is denoted by $\dim (E)$ and satisfies
		\begin{align*}
			d=\dim(E)=\sup\{\varkappa>0:m_\varkappa(E)=\infty \}=\inf \{\varkappa>0:m_\varkappa(E)=0\}.
		\end{align*}
	\end{lemma}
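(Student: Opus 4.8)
The plan is to reduce the whole statement to a single scaling comparison between the pre-measures at two different exponents; once that inequality is established, the existence of a critical value $d$, the dichotomy, the characterization as a supremum/infimum, and uniqueness all follow as formal consequences. The engine of the argument is the following observation. Fix $0\le\varkappa<\varkappa'$ and let $\{B(y_i,r_i)\}_{i\ge1}$ be any countable cover of $E$ with $0\le r_i<\varepsilon$. Since each radius obeys $2r_i<2\varepsilon$ and $\varkappa'-\varkappa>0$, I can factor
\[
\sum_{i=1}^\infty (2r_i)^{\varkappa'}=\sum_{i=1}^\infty (2r_i)^{\varkappa'-\varkappa}(2r_i)^{\varkappa}\le (2\varepsilon)^{\varkappa'-\varkappa}\sum_{i=1}^\infty (2r_i)^{\varkappa}.
\]
Taking the infimum over all admissible covers and then letting $\varepsilon\to0$ yields the key monotonicity principle: if $m_\varkappa(E)<\infty$ then $m_{\varkappa'}(E)=0$, and contrapositively, if $m_{\varkappa'}(E)>0$ then $m_\varkappa(E)=\infty$.

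Before defining $d$ I would check that the relevant set is nonempty and that $d\le n$. For bounded $E$ I would cover $E$ by a cubic grid of side $r$, noting that only $O(r^{-n})$ cells are needed, so that $\sum_i(2r_i)^{\varkappa}\le C\,r^{-n}r^{\varkappa}=C\,r^{\varkappa-n}\to0$ as $r\to0$ whenever $\varkappa>n$; for unbounded $E$ I would write $E=\bigcup_k E_k$ as a countable union of bounded pieces and invoke countable subadditivity of $m_\varkappa$ to conclude $m_\varkappa(E)=0$ for every $\varkappa>n$. Consequently the set $\{\varkappa>0:m_\varkappa(E)=0\}$ contains $(n,\infty)$, is bounded below by $0$, and I may set
\[
d:=\inf\{\varkappa>0:m_\varkappa(E)=0\}\in[0,n].
\]

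With the monotonicity principle in hand the dichotomy is immediate. If $\varkappa>d$, then by definition of the infimum there is $\varkappa''$ with $d\le\varkappa''<\varkappa$ and $m_{\varkappa''}(E)=0<\infty$, whence $m_\varkappa(E)=0$. If $\varkappa<d$, then $m_\varkappa(E)<\infty$ is impossible, for the principle would force $m_{\varkappa'}(E)=0$ for every $\varkappa'\in(\varkappa,d)$, contradicting that $d$ is the infimum of the zero set; hence $m_\varkappa(E)=\infty$. Uniqueness follows by squeezing: if two values $d_1<d_2$ both satisfied the dichotomy, any $\varkappa\in(d_1,d_2)$ would have to be simultaneously $\infty$ (from $\varkappa<d_2$) and $0$ (from $\varkappa>d_1$). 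Finally, the dichotomy shows that $\{\varkappa>0:m_\varkappa(E)=\infty\}$ lies between $(0,d)$ and $(0,d]$, while $\{\varkappa>0:m_\varkappa(E)=0\}$ lies between $(d,\infty)$ and $[d,\infty)$, so their supremum and infimum, respectively, are both exactly $d$, giving the claimed characterization.

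The only genuinely substantive step is the scaling comparison displayed first; everything downstream is formal. The one technical point requiring care is the bound $d\le n$, which rests on the standard grid-covering count of $\R^n$ together with countable subadditivity of the Hausdorff pre-measure to pass from bounded to unbounded $E$. This is where a little caution is needed, since a naive single-ball cover does not produce a decaying sum, and one must instead use the optimal $O(r^{-n})$ covering number to extract the factor $r^{\varkappa-n}$ that drives the estimate to zero.
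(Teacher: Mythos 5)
Your proof is correct, but note that the paper does not actually prove this lemma at all: it is stated as the definition of Hausdorff dimension and attributed to the literature (Adler, \emph{The Geometry of Random Fields}, Section~8.1), so there is no in-paper argument to compare against. What you have written is the standard self-contained derivation, and it is sound: the scaling comparison $m^{\varepsilon}_{\varkappa'}(E)\leq (2\varepsilon)^{\varkappa'-\varkappa}\,m^{\varepsilon}_{\varkappa}(E)$ for $\varkappa<\varkappa'$ is exactly the right engine, and your observation that everything else (dichotomy, uniqueness by squeezing, the $\sup$/$\inf$ characterization) is formal bookkeeping downstream of it matches the usual treatment. Your handling of $d\leq n$ is also the right one, and you correctly flag the two points where care is needed: a single-ball cover does not decay, so one needs the $O(r^{-n})$ grid count, and passing to unbounded $E$ needs countable subadditivity of $m_\varkappa$ (which holds since each $\varepsilon$-premeasure is countably subadditive and the limit in $\varepsilon$ is monotone). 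Two cosmetic remarks: a cube of side $r$ sits inside a ball of radius $\sqrt{n}\,r/2$, so your grid bound should carry a harmless factor $n^{\varkappa/2}$; and when $d=0$ the set $\{\varkappa>0:m_\varkappa(E)=\infty\}$ may be empty, so the $\sup$ in the final characterization needs the convention $\sup\emptyset=0$ --- but this degeneracy is present in the lemma as stated in the paper, not a defect of your argument.
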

The Hausdorff dimension satisfies the following properties.
    \begin{enumerate}[label=(H\arabic*), leftmargin=2cm]
		\item \textit{Countable stability:} \label{prop:Hausdorff:countablestability}If $F_1,F_2,\dots $ is a (countable) sequence of sets then $$\dim \Big(\bigcup\limits_{i=1}^\infty F_i\Big)=\sup\limits_{i\in\mathbb{N}}\dim (F_i).$$
		\item\textit{Monotonicity:} \label{prop:Hausdorff:monotonicity} If $E\subset F$ then $\dim (E)\leq \dim (F)$.
        \item\textit{Invariance under bi-Lipschitz mapping:}\label{prop:Hausdorff:invariance:bilipschitz} If $f:E\to \R^N$ is a bi-Lipschitz mapping, that is, if there exist constants $c_2\geq c_1>0$ such that for every $y,z\in E$ 
        $$c_1|y-z|\leq |f(y)-f(z)|\leq c_2|y-z|,$$
        then $\dim(E)=\dim(f(E))$.
	\end{enumerate}
Properties \ref{prop:Hausdorff:countablestability} and \ref{prop:Hausdorff:monotonicity} can be found in \cite[Section 2.2, Hausdorff Dimension]{Falconer2013fractal}, while \ref{prop:Hausdorff:invariance:bilipschitz} is stated in \cite[Corollary 2.4]{Falconer2013fractal}.
\subsection{Additional notation}
We will write  
    \begin{equation*}
        \SanbR\qquad \textrm{and} \qquad \SabR
    \end{equation*}
for the off-diagonal sum and the double sum, respectively. For given $\beta\in R_+$ we introduce the following notation 
\begin{align}
    \label{eq:Rplus}
    R_+(\beta) = \big\{(\alpha,\gamma)\cond{} \alpha,\gamma\in R_+, \alpha\neq \beta, \ab\neq 0,\gamma = \pm\reflect{\beta}\alpha\big\}.
\end{align}
For example, for $i<j<k$ and $\beta=e_i-e_j$ the pair $(e_i-e_k, e_j-e_k)$ is an element of $R_+(\beta)$.

\section{Simple and multiple collisions}
\label{sec:collisions}
In this section we study the nature of collisions in more detail presenting a series of propositions leading to the proof of Theorem~\ref{th:multiplecollisions}.

For fixed $y\in \Cc$, we introduce two sets indicating the roots that make $\inner{y}{\alpha}$ equal to zero and those for which this expression is positive, respectively,
  \begin{equation*}
		\Pzero = \{\aR: \inner{\alpha}{y}=0\}\/,\quad           \Pplus = \{\aR: \inner{\alpha}{y}>0\}\/.
  \end{equation*}
Note that $\Pzero \cup \Pplus = R_+$ and $\Pzero \cap \Pplus = \emptyset$ for every $y\in\Cc$. Obviously $\Pzero = \emptyset$ and $\Pplus=R_+$ for every $y\in \C$ and $\Pzero$ becomes nonempty on the boundary $\Cb$. We split the boundary $\Cb$ of the Weyl chamber $\C$ into parts depending on the number of elements in $\Pzero$. More precisely, we introduce the following.
\begin{definition}
	We say that $y\in \Cb $ is \textit{a collision point of order $m$}, where $m\in\{1,\ldots,M\}$, if $|\Pzero|=m$. We denote the set of all collision points of order $m$ as $\Cbm$.
\end{definition}

\begin{definition}
	We say that for the general particle system $x = (x_1,\ldots, x_N)$ a collision of order $m$ occurs at time $t$, if $x_t\in\Cbm$. If $m=1$ we call it \textit{a single collision} and \textit{a multiple collision} is a collision of order $m\geq 2$.

    % \nicole{For a particle system? $x$ is the solution of our SDE, is it on purpose?} \jacek{[I do not understand  the question.]} \nicole{In the beginning we say $x$ is a solution to \eqref{eq:x:SDE} and call it the general particle system. I guess the same notation $x$ is on purpose? Should we say ''for the general particle system'' like in the beginning instead of ''a particle system''.}
\end{definition}
Note that the boundary of the Weyl chamber $\Cb$ decomposes into a disjoint union of $\Cbm$ over $m\in\{1,\ldots,M\}$. Moreover, collisions of $x=(x_1,\ldots,x_N)$ and their orders are controlled by the symmetric polynomials $\eA{n}$. In fact, there is a collision of order $m$ at time $t$ if and only if $\eA{n}_t = 0$ and $\eA{n-1}_t > 0$, where $n= M-m+1$. Let us denote the corresponding first hitting times of zero for $\eA{n}$ by
\begin{equation*}
	\tau_n := \inf\big\{t\in (0,T_\infty]:\eA{n}_t = 0 \big\}\/,\quad n=0,\ldots,M.
\end{equation*}

Note that the hitting times introduced above are the same for every choice of the weights $w$ due to their positivity. Consequently, we omit $w$ in the notation here. Obviously, the hitting times are ordered as follows
$$
	\tau_{M}\leq \tau_{M-1}\leq \ldots\leq \tau_1\leq \tau_0=\lifetime\/.
$$
Here, we took advantage of the convention that $\eA{0}\equiv 1$, which immediately gives $\tau_0=\lifetime$. Our starting point is the stochastic description of polynomials $\eA{n}$, which is provided in the following proposition.

%%%%%%%%%%%%%%%%%%%%%%%%%%%%%%%%%%%%%%%% PROPOSITION 1 %%%%%%%%%%%%%%%%%%%%%%%%%%%%%%%%%%%%%%%%%%%%%%%%%%%%%%%%%%%%%%%%%%%%%%%%%%%%%%%%%%

\begin{proposition}
	\label{prop:eA:SDE}
	For every fixed set of weights $w = \{w_\alpha>0: \alpha\in R_+\}$ and $n=1,\ldots,M$ we have
	\begin{eqnarray*}
	d\eA{n} &=& 2\sum_{k=1}^N\bigg(\sum_{\aR} \alpha_k^* \xas \eaA{n-1}\bigg)\sigma(x_k)dB_k+2\sum_{k=1}^N b(x_k)\sum_{\aR}\alpha_k^* \xas \eaA{n-1}dt\\
	&&+ 2\SabR \frac{\abs\xas}{\xbs} \eaA{n-1}k_\beta(x)dt+\sum_{k=1}^N\sigma^2(x_k)\sum_{\aR}(\alpha_k^*)^2\eaA{n-1}\,dt\\
	&&+ 2\sum_{k=1}^N \sigma^2(x_k)\SanbR \alpha_k^*\beta_k^* \xas \xbs \eabA{n-2} dt
\end{eqnarray*}
up to the lifetime $\lifetime$.
\end{proposition}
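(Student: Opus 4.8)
The plan is to view $\eA{n}$ as a polynomial in the $M$ variables $u_\alpha := \xas^2$, $\aR$, and to apply It\^o's formula in two stages: first to obtain the semimartingale dynamics of each $u_\alpha$, and then to differentiate $\eA{n}$ as a (polynomial, hence $C^\infty$) function of the vector $(u_\alpha)_{\aR}$. The two symmetric-function identities I will lean on are $\partial \eA{n}/\partial u_\alpha = \eaA{n-1}$ and, for $\anb$, $\partial^2 \eA{n}/\partial u_\alpha\partial u_\beta = \eabA{n-2}$, together with the fact that $\eA{n}$ is linear in each $u_\alpha$, so that $\partial^2 \eA{n}/\partial u_\alpha^2 = 0$. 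This last point is what removes the diagonal from the second-order term and explains why the final line of the claim carries an off-diagonal sum $\SanbR$.

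First I would differentiate the linear functional $\xas = \Sni \alpha_i^* x_i$ along \eqref{eq:x:SDE:mult}. This produces a martingale part $\Sni \alpha_i^*\sigma(x_i)\,dB_i$, a drift $\Sni \alpha_i^* b(x_i)\,dt$ from $b$, and an interaction drift obtained by pairing $\alpha^*$ against each $\beta_i/\xb$. The one delicate bookkeeping point is the weight cancellation: since $\beta_i/\xb = \beta_i^*/\xbs$ and $\langle\alpha^*,\beta\rangle/\xb = \abs/\xbs$, the interaction drift collapses to $\SbR (\abs/\xbs)\, k_\beta(x)\,dt$, which is weight-consistent with the primed objects. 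Squaring by It\^o, and using that the quadratic variation of $\xas$ is $\Sni (\alpha_i^*)^2\sigma^2(x_i)\,dt$ (the $B_i$ being independent), yields the dynamics of $u_\alpha$: a martingale part $2\xas\Sni \alpha_i^*\sigma(x_i)\,dB_i$, a $b$-drift, an interaction drift, and a quadratic-variation drift $\Sni (\alpha_i^*)^2\sigma^2(x_i)\,dt$. I would also record the cross-variation, $4\,\xas\xbs\Sni \alpha_i^*\beta_i^*\sigma^2(x_i)\,dt$, since that is all the second-order term requires.

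With these in hand, It\^o's formula for the composition gives $d\eA{n} = \SaR \eaA{n-1}\, du_\alpha + \tfrac12 \SanbR \eabA{n-2}\, d\langle u_\alpha, u_\beta\rangle$, where the diagonal second-order terms have already vanished by linearity. Substituting $du_\alpha$ and interchanging the $\alpha$- and $k$-summations reproduces the first four terms of the statement line by line: the Brownian term, the $b$-drift, the interaction drift $2\SabR (\abs\xas/\xbs)\,\eaA{n-1}k_\beta(x)\,dt$, and the term $\Sni \sigma^2(x_i)\SaR (\alpha_i^*)^2\,\eaA{n-1}\,dt$ coming from the quadratic variation of $\xas$; the remaining second-order sum gives exactly $2\Sni \sigma^2(x_i)\SanbR \alpha_i^*\beta_i^*\,\xas\xbs\,\eabA{n-2}\,dt$ after the factor $\tfrac12\cdot 4 = 2$ is accounted for. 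A final relabelling of the summation index $i$ as $k$ matches the stated form.

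I expect no genuine analytic obstacle here: $\eA{n}$ is a polynomial, so It\^o's formula applies without regularity concerns, and the identity is asserted only up to $\lifetime$, where every $\xbs > 0$ and the interaction drift is well defined. The only steps demanding care are (i) the weight cancellation that reconciles the unprimed coefficients of \eqref{eq:x:SDE:mult} with the primed quantities appearing in the statement, and (ii) keeping the $\anb$ restriction on the second-order term, which is legitimate precisely because each elementary symmetric polynomial is linear in every variable $u_\alpha$.
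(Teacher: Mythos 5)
Your proposal is correct and is in substance the same It\^o-formula computation as the paper's proof: the paper applies It\^o directly in the coordinates $x_k$, with its formula for $\partial^2 \eA{n}/\partial x_k^2$ packaging both of your stages at once, while you route the chain rule through the intermediate variables $u_\alpha=\xas^2$, which merely makes explicit the vanishing of the diagonal second-order terms (multilinearity) and the weight cancellation $\langle\alpha^*,\beta\rangle/\xb=\abs/\xbs$. All terms match, including the coefficient $\tfrac12\cdot 4=2$ on the off-diagonal sum.
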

\begin{proof}
Using the fact that $dx_kdx_j = 0$, whenever $k\neq j$, the It\^o formula leads to 
\begin{equation*}
     d\eA{n} =   \sum_{k=1}^N \dfrac{\partial}{\partial x_k}\, \eA{n}dx_k + \dfrac12 \sum_{k=1}^N  \dfrac{\partial^2}{\partial x_k^2}\, \eA{n} dx_kdx_k\/.
\end{equation*}
Since we simply have the following representations of the derivatives
\begin{eqnarray*}
    \dfrac{\partial}{\partial x_k}\, \eA{n} & = & 2 \sum_{\aR} \alpha_k^* \xas\eaA{n-1}\/,\\
    \dfrac{\partial^2}{\partial x_k^2}\, \eA{n} & = & 2 \sum_{\aR} (\alpha_k^*)^2 \eaA{n-1} + 4 \SanbR \alpha_k^* \beta_k^* \xas \xbs \eabA{n-2} \/,
\end{eqnarray*}
the result follows now directly from \eqref{eq:x:SDE}.
\end{proof}

\bigskip

Let us assume that $\eA{M}_0>0$, that is, we start from the interior of the Weyl chamber. We consider a collection of semi-martingales
$$
	S_n^w(t) := -\ln \eA{n}_t\/,\quad n=2,\ldots,M
$$
for $t<\tau_{n}$. These processes control the first hitting times $\tau_n$ in the way that $S_n^w$ explodes to $+\infty$ before the life-time $\lifetime$ if and only if $\eA{n}$ hits $0$. To explore these relations, we have to start with the following stochastic description of $S_n^w$.

%%%%%%%%%%%%%%%%%%%%%%%%%%%%%%%%%%%%%%%% PROPOSITION 2 %%%%%%%%%%%%%%%%%%%%%%%%%%%%%%%%%%%%%%%%%%%%%%%%%%%%%%%%%%%%%%%%%%%%%%%%%%%%%%%%%%
\begin{proposition}
    \label{prop:Sn:SDE}
    Assume that $\eA{M}_0>0$. For every fixed set of weights $w = \{w_\alpha>0: \alpha\in R_+\}$ we have the semi-martingale decomposition $dS_n^w = dM_n^w+dA_n^w$, where the local martingale part is
    \begin{equation*}
	dM_n^w = -\dfrac{2}{\eA{n}}\sum_{k=1}^N\left(\sum_{\aR} \alpha_k^* \xas \eaA{n-1}\right)\sigma(x_k)dB_k
    \end{equation*}
 and the drift part $dA_n^w$ is given as the sum $(A_1+A_2+A_3+A_4+A_5+A_6)\,dt$ of the following six components 
    \begin{eqnarray}
        \label{S:A1:form}
	A_1 &=& \frac{1}{(\eA{n})^2}\sum_{k=1}^N \sigma^2(x_k)\SaR (\alpha_k^*)^2  \left(\xas^2\eaA{n-1}-\eaA{n}\right)\eaA{n-1}\/,\\
        \label{S:A2:form}
        A_2 &=& \frac{2}{(\eA{n})^2}\sum_{k=1}^N \sigma^2(x_k)\SanbR \alpha_k^* \beta_k^* \xas\xbs\big(\eabA{n-1}\big)^2\/,\\
        \label{S:A3:form}
        A_3 &=& -\frac{2}{(\eA{n})^2}\sum_{k=1}^N \sigma^2(x_k)\SanbR \alpha_k^* \beta_k^* \xas\xbs\eabA{n}\eabA{n-2}\/,
     \end{eqnarray}  
     and
     \begin{eqnarray}
	\label{S:A4:form}
        A_4 &=& - \frac{2}{\eA{n}}\sum_{k=1}^N b(x_k)\sum_{\aR}\alpha_k^* \xas \eaA{n-1}\/,\\
        \label{S:A5:form}
	A_5 &=&  - \frac{2}{\eA{n}}\sum_{\aR} |\alpha^*|^2\eaA{n-1}k_\alpha(x)\/,\\
        \label{S:A6:form}
	A_6 &=&  	- \frac{2}{\eA{n}}	\SanbR \frac{\abs\xas}{\xbs} \eaA{n-1}k_\beta(x)\/,	
    \end{eqnarray}
    for $t<\tau_n \wedge \lifetime$.
\end{proposition}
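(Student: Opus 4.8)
The plan is to apply It\^o's formula to the composition of $f(y)=-\ln y$ with the semimartingale $\eA{n}$, whose dynamics are already recorded in Proposition~\ref{prop:eA:SDE}. Writing $dS_n^w=-\frac{1}{\eA{n}}d\eA{n}+\frac{1}{2(\eA{n})^2}d\langle \eA{n}\rangle$, the local martingale part arises solely from the $dB_k$ terms of $d\eA{n}$ multiplied by $-1/\eA{n}$, which immediately produces the stated $dM_n^w$. Since the driving Brownian motions are independent, $dB_k dB_j=\delta_{kj}\,dt$, so
\[
d\langle \eA{n}\rangle = 4\sum_{k=1}^N \sigma^2(x_k)\bigg(\sum_{\aR}\alpha_k^*\xas\eaA{n-1}\bigg)^2 dt,
\]
and the It\^o correction contributes the drift $\frac{2}{(\eA{n})^2}\sum_{k=1}^N\sigma^2(x_k)\big(\sum_{\aR}\alpha_k^*\xas\eaA{n-1}\big)^2dt$. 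The remaining task is to multiply each of the four drift terms of $d\eA{n}$ by $-1/\eA{n}$, add this correction, and match everything against $A_1,\dots,A_6$.

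Two components are essentially immediate. The $b$-drift of $d\eA{n}$ times $-1/\eA{n}$ is exactly $A_4$. The interaction drift $2\SabR\frac{\abs\xas}{\xbs}\eaA{n-1}k_\beta(x)$ times $-1/\eA{n}$ I would split according to whether $\alpha=\beta$ or $\alpha\neq\beta$: the diagonal part, where $\abs=|\alpha^*|^2$ and $\xas/\xbs=1$, gives $A_5$, while the off-diagonal part is $A_6$.

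The heart of the matter is to show that the two $\sigma^2$-drifts of $d\eA{n}$ together with the It\^o correction reproduce $A_1+A_2+A_3$. I would expand the square in the correction into a diagonal ($\alpha=\beta$) and an off-diagonal ($\alpha\neq\beta$) sum. For the diagonal piece, combining $\frac{2}{(\eA{n})^2}\sum_k\sigma^2(x_k)\SaR(\alpha_k^*)^2\xas^2(\eaA{n-1})^2$ with the diagonal $\sigma^2$-drift $-\frac{1}{\eA{n}}\sum_k\sigma^2(x_k)\SaR(\alpha_k^*)^2\eaA{n-1}$ and invoking the recursion $\eA{n}=\xas^2\eaA{n-1}+\eaA{n}$ (from the proof of \eqref{eq:e:form2}) to rewrite $2\xas^2\eaA{n-1}-\eA{n}=\xas^2\eaA{n-1}-\eaA{n}$ yields precisely $A_1$.

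The main obstacle is the off-diagonal piece. There the It\^o correction gives $\frac{2}{(\eA{n})^2}\sum_k\sigma^2(x_k)\SanbR\alpha_k^*\beta_k^*\xas\xbs\,\eaA{n-1}\ebA{n-1}$, whereas the off-diagonal $\sigma^2$-drift contributes $-\frac{2}{\eA{n}}\sum_k\sigma^2(x_k)\SanbR\alpha_k^*\beta_k^*\xas\xbs\,\eabA{n-2}$, which after bringing it to the common denominator $(\eA{n})^2$ becomes $-\frac{2}{(\eA{n})^2}\sum_k\sigma^2(x_k)\SanbR\alpha_k^*\beta_k^*\xas\xbs\,\eA{n}\eabA{n-2}$. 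Their sum carries the factor $\eaA{n-1}\ebA{n-1}-\eA{n}\eabA{n-2}$, which by the algebraic identity \eqref{eq:e:form2} equals $(\eabA{n-1})^2-\eabA{n}\eabA{n-2}$; substituting this and separating the two resulting terms produces exactly $A_2+A_3$. The only real care needed is the bookkeeping of the $1/\eA{n}$ versus $1/(\eA{n})^2$ prefactors so that both drift contributions share the denominator $(\eA{n})^2$ before \eqref{eq:e:form2} is applied. All manipulations are valid on $\{t<\tau_n\wedge\lifetime\}$, where $\eA{n}>0$ ensures the logarithm and its derivatives are well defined.
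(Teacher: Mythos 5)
Your proposal is correct and follows essentially the same route as the paper's proof: It\^o's formula applied to $-\ln \eA{n}$, the quadratic variation split into diagonal and off-diagonal sums, the recursion $\eA{n}=\xas^2\eaA{n-1}+\eaA{n}$ yielding $A_1$, the identity \eqref{eq:e:form2} converting $\eaA{n-1}\ebA{n-1}-\eA{n}\eabA{n-2}$ into $A_2+A_3$, and the diagonal/off-diagonal split of the interaction sum giving $A_5$ and $A_6$. All the bookkeeping of prefactors you flag is handled exactly as the paper does it, so nothing is missing.
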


\begin{proof}
		Applying the It\^o formula for $t<\tau_n\wedge \lifetime
        $ we get
    \begin{equation*}
	dS_n^w = -\frac{d\eA{n}}{\eA{n}}+\dfrac{d\eA{n}d\eA{n}}{2(\eA{n})^2}\/.
    \end{equation*}
		The stochastic description of $\eA{n}$ given in Proposition \ref{prop:eA:SDE} directly gives the form of the martingale part $dM^w_n$, but we can also use it to calculate  
    \begin{eqnarray*}
	\frac12 d\eA{n}d\eA{n} &=& 2\sum_{k=1}^N\left(\sum_{\aR} \alpha_k^* \xas \eaA{n-1}\right)^2\sigma^2(x_k)\,dt\\
	   &=& 2\sum_{k=1}^N \sigma^2(x_k) \SaR (\alpha_k^*)^2 \xas^2 \big(\eaA{n-1}\big)^2\,dt\\
		 && + 2\sum_{k=1}^N \sigma^2(x_k)\SanbR \alpha_k^* \beta_k^* \xas\xbs \eaA{n-1}\ebA{n-1}dt\/.
    \end{eqnarray*} 
    Finally, we can write the drift part of $dS_n^w$ as the sum $(A_1+\widetilde{A_2}+A_4+A_5+A_6)dt$, where
    \begin{eqnarray*}
        A_1 &=& \frac{1}{(\eA{n})^2}\sum_{k=1}^N \sigma^2(x_k)\SaR (\alpha_k^*)^2  \left(2\xas^2 \eaA{n-1}-\eA{n}\right)\eaA{n-1}\/,\\
	\widetilde{A_2} &=& \frac{2}{(\eA{n})^2}\sum_{k=1}^N \sigma^2(x_k)\SanbR \alpha_k^* \beta_k^* \xas\xbs \left(\eaA{n-1}\ebA{n-1} - \eabA{n-2}\eA{n} \right)
    \end{eqnarray*}
    and $A_4$, $A_5$, $A_6$ are given in \eqref{S:A4:form}, \eqref{S:A5:form}, \eqref{S:A6:form}, respectively. Note that $A_5$ and $A_6$ are obtained by splitting the sum 
		$$
			\sum_{\aR}\sum_{\bR} \frac{\abs\xas}{\xbs} \eaA{n-1}k_\beta(x)
		$$
		into on and off diagonal sums. The formula \eqref{S:A1:form} for $A_1$ is obtained from the formula given above by $\eA{n} = \xas^2\eaA{n-1}+\eaA{n}$, which leads to 
    \begin{equation*}
        2\xas^2 \eaA{n-1}-\eA{n} = \xas^2\eaA{n-1}-\eaA{n}\/.
    \end{equation*}
    From the other side, using \eqref{eq:e:form2}, we get
    \begin{equation*}
        \eaA{n-1}\ebA{n-1} - \eabA{n-2}\eA{n} = \big(\eabA{n-1}\big)^2-\eabA{n}\eabA{n-2}
    \end{equation*}
    and we can rewrite $\widetilde{A_2}$ as the sum of $A_2$ and $A_3$ given in \eqref{S:A2:form} and \eqref{S:A3:form}.
\end{proof}

In the next proposition, we show that double collisions do not occur up to the life time $\lifetime $, whenever the system starts from the interior of the Weyl chamber. This is the most crucial and difficult part of the proof of Theorem \ref{th:multiplecollisions}. In the base case, where $\sigma$ is constant and equal to $1$, the proof is reduced to showing that all components of the drift of process $S$ are bounded or finite for every finite $t$, which prevents the process from exploding to infinity in finite time. When the drift parts are modified by a positive $\sigma$, the matter becomes more subtle and requires, on one hand, localization — an analysis of the drift of $S$ in the vicinity of a fixed boundary point — and, on the other hand, the proper selection of weights to achieve a similar effect as in the $\sigma = 1$ case. The key here is that the explosion of $S^w$ for fixed weight $w$ is equivalent to an explosion for any other positive weight, and thus also for the process without weights.

%%%%%%%%%%%%%%%%%%%%%%%%%%%%%%%%%%%%%%%% PROPOSITION 5 %%%%%%%%%%%%%%%%%%%%%%%%%%%%%%%%%%%%%%%%%%%%%%%%%%%%%%%%%%%%%%%%%%%%%%%%%%%%%%%%%%

\begin{proposition}
    Assume that $\eA{M}_0>0$, that is, the particle system $x$ starts from the interior of the Weyl chamber. Then, under the assumptions of Theorem \ref{th:multiplecollisions}, we have
    \begin{equation*}
        \tau_{M-1} = \tau_{M-2} = \ldots = \tau_1 = \tau_0= \lifetime
    \end{equation*}
    almost surely.
\end{proposition}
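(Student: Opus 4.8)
The plan is to reduce everything to the single statement $\tau_{M-1}=\lifetime$ almost surely: since $\tau_{M-1}\le\tau_{M-2}\le\ldots\le\tau_1\le\tau_0=\lifetime$, proving $\tau_{M-1}=\lifetime$ squeezes every intermediate hitting time to $\lifetime$. Because the elementary symmetric polynomial $e_{M-1}$ of $M$ nonnegative numbers vanishes exactly when at least two of them vanish, $\eA{M-1}=0$ is equivalent to $x\in\bigcup_{m\ge2}\Cbm$, so this is precisely the assertion that no multiple collision occurs. As $\tau_{M-1}$ is independent of the weights $w$, I am free to choose $w$ conveniently, and I work with the semi-martingale $S_{M-1}^w=-\ln\eA{M-1}$ from Proposition~\ref{prop:Sn:SDE}; the event $\{\tau_{M-1}<\lifetime\}$ forces $\eA{M-1}\to0$, hence $S_{M-1}^w\to+\infty$, before $\lifetime$. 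Thus it suffices to show that $S_{M-1}^w$ cannot explode to $+\infty$ in finite time.

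I would localize near the set $Z:=\{y\in\Cb:\eA{M-1}(y)=0\}$ of non-simple collision points, where any explosion must originate. Fix $y_0\in Z$ and suppose, for concreteness, that exactly the two roots $\alpha,\beta$ satisfy $\langle y_0,\alpha^*\rangle=\langle y_0,\beta^*\rangle=0$ (the case of more vanishing roots is analogous). In a small ball around $y_0$ the continuous functions $\sigma,b,k_\gamma$ are bounded, and by \ref{ass:sigma:k:positive} each $k_\gamma$ is bounded below by a strictly positive constant. Here I choose the weights adapted to $y_0$, namely $w_\gamma^2=\Sni\gamma_i^2\sigma^2((y_0)_i)$, so that the diffusion coefficient $\sum_k\sigma^2((y_0)_k)(\gamma_k^*)^2$ attached to each root equals $1$ at $y_0$; this is the device that reproduces, locally, the clean behaviour of the constant-coefficient ($\sigma\equiv1$) case. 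Writing $P=\PcRnab\xcs^2>0$, one finds $\eA{M-1}\approx(\xas^2+\xbs^2)P$ near $y_0$, so $\eA{M-1}$ degenerates exactly like $\xas^2+\xbs^2$, as the theorem predicts.

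The crux is to control the drift $dA_{M-1}^w=(A_1+\ldots+A_6)\,dt$ near $y_0$ and to exhibit a squared-Bessel-type lower bound for $\eA{M-1}$. The self-interaction term $A_5$ contributes a strictly negative singular drift of order $-c/(\xas^2+\xbs^2)$ with $c>0$, while the martingale part has quadratic variation of matching order $d\langle M_{M-1}^w\rangle\asymp(\xas^2+\xbs^2)^{-1}\,dt$; quantitatively I would establish that in the ball
\[
dA_{M-1}^w\le-\kappa\,d\langle M_{M-1}^w\rangle+C\,dt,\qquad \kappa>0,
\]
which says that $\eA{M-1}$ behaves like a squared-Bessel process of dimension strictly larger than $2$ and hence cannot reach $0$ (the pure-diffusion part alone produces the critical dimension $2$, reflecting that two gaps vanish simultaneously, and the positive $k_\gamma$ pushes it above). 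The diffusion terms $A_1,A_2,A_3$ are why the weights matter: the leading $(\xas^2+\xbs^2)^{-1}$ singularity of $A_1$ is proportional to the difference of the now-equalized diffusion coefficients, hence only $o(1)\cdot(\xas^2+\xbs^2)^{-1}$ inside a small enough ball. The genuinely delicate point—and the step I expect to be the main obstacle—is the interaction cross-term $A_6$ together with the overlapping ($\ab\neq0$) parts of $A_2,A_3$, which carry an apparent $1/\xbs$ singularity of indefinite sign; I would remove it by pairing summands according to the reflection structure $R_+(\beta)$ of \eqref{eq:Rplus} and invoking Lemma~\ref{prop:AP:alfagamma}, whose identities \eqref{eq:ac:form1}--\eqref{eq:ac:form2} turn the paired combination $\abs\xas+\bcs\xcs$ into a multiple of $\xbs$, cancelling the singularity; the identity \eqref{eq:e:form2} is used to reorganize $A_2+A_3$ along the way. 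The $b$-drift $A_4$ is of lower order $O((\xas^2+\xbs^2)^{-1/2})$ and is absorbed into $C\,dt$.

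Finally, I turn the differential inequality into non-explosion: with $p=2\kappa$, the process $(\eA{M-1})^{-p}=e^{pS_{M-1}^w}$ has drift bounded by a constant multiple of itself, so up to a bounded exponential factor it is a nonnegative local supermartingale and is therefore almost surely finite at every finite time; consequently $\eA{M-1}$ stays strictly positive and $x$ cannot reach $y_0$ from within the ball. Covering $Z$ by countably many such balls and using that away from $Z$ all coefficients are locally bounded, I conclude that $x$ never meets $Z$ before $\lifetime$, i.e.\ $\tau_{M-1}=\lifetime$ almost surely, whence $\tau_{M-1}=\tau_{M-2}=\ldots=\tau_1=\tau_0=\lifetime$.
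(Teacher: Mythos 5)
Your reduction to the single statement $\tau_{M-1}=\lifetime$ is logically valid, and much of your machinery matches the paper's actual proof (weights $w_\gamma^2=\Sni\gamma_i^2\sigma^2((y_0)_i)$ adapted to the center of each localization ball, a countable cover of the collision set, compensation of the singular drift by a fraction of $A_5$, and removal of the $1/\xbs$ singularity in $A_6$ by pairing over $R_+(\beta)$ via Lemma~\ref{prop:AP:alfagamma}). But there is a genuine gap exactly where you wave it away: the parenthetical ``the case of more vanishing roots is analogous'' is false for $S_{M-1}^w$. If $m\geq 3$ roots vanish at $y_0$, then near $y_0$ one has $\eA{M-1}\asymp P\cdot e_{m-1}\big(\{\xas^2:\alpha\in\Pzero\}\big)$, which vanishes to order $m-1$ in the small squares rather than linearly like $\xas^2+\xbs^2$. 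Your entire quantitative picture --- drift singularity of order $-c/(\xas^2+\xbs^2)$, matching quadratic variation, squared-Bessel comparison of dimension strictly above $2$ --- is specific to the order-$2$ stratum and does not transfer: for higher-order points the drift of $-\ln\eA{M-1}$ has a different (and regime-dependent, e.g.\ $a\ll b\sim c$) singularity structure, and the counting that makes the $A_5$-compensation close (the paper's bound $n_\alpha\leq m-2$) must be redone per order. Since at $\tau_{M-1}$ the process may a priori sit at a point of any order $m\geq 2$ (i.e.\ on $\{\tau_{M-1}=\tau_{M-2}\}$), your one-level argument only rules out the double-collision stratum and leaves the rest unproven. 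The paper's device, which your plan omits, is to stratify: for each $n=1,\ldots,M-1$ it works on the event $\{\tau_n<\tau_{n-1}\}$, on which $\eA{n-1}(\tau_n)>0$ forces the collision order to be exactly $m=M-n+1$, so that $\eA{n}\geq H_+^w(x)\sum_{\alpha\in\Pzero}\xas^2$ degenerates \emph{linearly} (inequality \eqref{eq:ineq_en_K}) and the analysis of $S_n^w$ stays in the tractable regime; chaining these null events from $n=1$ down to $n=M-1$ then yields all the equalities at once. To repair your proof you would either reinstate this event decomposition or genuinely carry out the drift estimates for $\eA{M-1}$ near points of every order, which is a substantially harder computation.

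A secondary, smaller point: your endgame claims the inequality $dA_{M-1}^w\leq-\kappa\,d\langle M_{M-1}^w\rangle+C\,dt$ with $\kappa>0$ and deduces non-explosion via the exponential supermartingale $(\eA{M-1})^{-p}$. The paper needs, and proves, something weaker: the total drift is bounded above by a time-integrable process on the approach interval, and the local martingale part cannot explode to $+\infty$ by the McKean time-change argument, so $S_n^w(\tau_n)<\infty$ already gives the contradiction. Your stronger proportionality to the quadratic variation is not established by your sketch (it would require the $k_\gamma$ lower bound to dominate the diffusion cross-terms uniformly in the ball), and it is unnecessary; replacing it by the paper's integrability-of-drift criterion both simplifies and actually closes that step.
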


\begin{proof}
        We fix $n\in\{1,\ldots,M-1\}$ and work on the set $\{\tau_{n}<\tau_{n-1}\leq\lifetime\}$ with the intention of showing that its probability is zero.
		Note that on $\{\tau_{n}<\tau_{n-1}\}$ we have $\eA{n}(\tau_{n})=0$ and $\eA{n-1}(\tau_{n})>0$, which is equivalent to saying $x_{\tau_n} \in \Cbm$, where $m=M-n+1$.

        \medskip

		We begin by constructing the following open cover of $\Cbm$. Let us fix $y\in \Cbm$. The continuity argument enables us to find an open and bounded set (a ball) $E=E(y) = B(y, r)\subset \R^N$ such that the following statements hold.
		\begin{enumerate}[label=(\roman*)]
			\item \label{eq:Pzero:inclusion}For every $x\in E$ we have 
				\begin{equation*}
				%\label{eq:Pzero:inclusion}
					\Pzerox \subseteq \Pzero\/.
				\end{equation*}
			\item \label{eq:Hplus:condition}
                For every $x\in E$ we have
			  \begin{equation*}
					%\label{eq:Hplus:condition}
					\prod_{\alpha\in \Pplus} \xa^2>0.
				\end{equation*}
			\item \label{eq:k:lowerbound} For every $x\in E$ and every $\alpha\in R_+$ we have
				\begin{equation*}
                    %\label{eq:k:lowerbound}
					k_\alpha(x)\geq \varepsilon
				\end{equation*}
				where $\varepsilon = \varepsilon(y) = \frac{1}{2}\inf_{\aR} k_\alpha(y)>0$.
            \item \label{eq:sigma:uniform} For all $x,z\in E$ and $k=1,\ldots,N$ we have
                    \begin{equation*}
                        %\label{eq:sigma:uniform}
                        |\sigma^2(x_k)-\sigma^2(z_k)|\leq \frac{\varepsilon}{8M}\/.
                    \end{equation*}

			 \item 
              \label{eq:sigma2:ratio} For all $x\in E$ and $\alpha\in \Pzero$ we have
				  \begin{equation*}
						%\label{eq:sigma2:ratio}
					    1-\frac{\varepsilon_0}{8M} \leq \frac{\sum_{k=1}^N \sigma^2(x_k)\alpha_k^2}{\sum_{k=1}^N \sigma^2(y_k)\alpha_k^2} \leq 1+\frac{\varepsilon_0}{8M}\/,
				  \end{equation*}		
                where $\varepsilon_0 = \varepsilon \min_{\alpha\in \Pzero} |\alpha^*|^2$ 
		\end{enumerate}
    Existence and positivity of $\varepsilon$ follows from \ref{ass:sigma:b:continuous} and \ref{ass:sigma:k:positive}. Note also that for fixed $y$ the sum $\sum_{k=1}^N \sigma^2(y_k)\alpha_k^2$ appearing in \ref{eq:sigma2:ratio} is a fixed positive number. From the above given cover $\big\{E(y): y\in \Cbm\big\}$ we can select a countable sub-cover since we are working on a separable metric space. Let us denote this countable sub-cover of balls by $\{E_i: i\in \N\}$, where $E_i=E(y^i)$ for some $y^i\in \Cbm$. Consequently we get
		$$
			\{\tau_{n}<\tau_{n-1}\} = \bigcup_{i\in\N}\{\tau_{n}<\tau_{n-1},x_{\tau_n} \in E_i\}
		$$
		and we can restrict our consideration to $\{\tau_{n}<\tau_{n-1},x_{\tau_n} \in E_i\}$ for fixed $i$ and show that this set has probability zero, as then we deal with a countable collection of sets of measure zero. 
		
		Our main tool here is the process $S_n^w$ with a suitable chosen set of weights and the fact that it is enough to find one $S_n^w$ which cannot explode on $E_i$ under the additional condition $\tau_{n}<\tau_{n-1}$ to conclude that $\pr(\tau_{n}<\tau_{n-1},x_{\tau_n} \in E_i)=0$. Consequently, the crucial issue is the choice of a suitable set of weights. It turns out that the desired weights are determined by the martingale coefficient $\sigma^2$ in the following way
        \begin{align}
            \label{eq:weights}
            w_\alpha = \left(\sum_{k=1}^N \alpha_k^2\sigma^2(y^i_k)\right)^{1/2}\/,\quad \alpha\in R_+\/.
        \end{align}
		  Positivity of the weights follows from \ref{ass:sigma:k:positive}. From now on, we consider $w=\{w_\alpha: \alpha\in R_+\}$, with $w_\alpha$ defined above. Conditions \ref{eq:Pzero:inclusion} and \ref{eq:Hplus:condition} ensure that for every collision point $y$ of order $m=M-n+1$ from $E_i$ we have $\Pzero = P^0(y^i)$. 
        Thus, for every $y\in \Cbm\cap E_i$ we define the function
		\begin{equation*}
			H_+^w(x) = \prod_{\alpha\in \Pplus} \xas^2\/,\quad x\in \C\/,
		\end{equation*}
		which does not depend on $y$ as long as $y\in \Cbm\cap E_i$ and is strictly positive on $E_i$ since 
		\begin{equation*}
			H_+^w(x) = \prod_{\alpha\in \Pplus}\frac{\xa^2}{w_\alpha^2} = \left(\prod_{\alpha\in \Pplus}\frac{1}{w_\alpha^2}\right) \prod_{\alpha\in \Pplus}\xa^2\/\stackrel{\text{\ref{eq:Hplus:condition}}}{>}0.
		\end{equation*}
		We will often use the positivity of $H^w_+(x)$ together with the following simple bound
    \begin{equation}
    \label{eq:ineq_en_K}
        \eA{n} \geq  H^w_+(x) \sum_{\alpha\in \Pzero} \xas^2 \/. 
    \end{equation}
	For the purposes of this proof, as we are now restricted to the set $E_i$, we introduce the following auxiliary notation
    \begin{equation*}
	c_1 = \sup_{x\in E_i, \aR}\sum_{k=1}^N |\alpha_k^*\, b(x_k)|\/,\quad c_2 = \sup_{x\in E_i,\alpha\in \Pplus} \frac{1}{\xas}\/,\qquad 
    c_3 = \sup_{x\in E_i} \frac{1}{H_+^w(x)}\/.
    \end{equation*}
    Note that the continuity condition \ref{ass:sigma:b:continuous} and the fact that $E_i$ is bounded, with $E_i\subset \overline{E_i}$ where $\overline{E_i}$ is compact, ensure that all the quantities given above are finite. 

\medskip 

We want to show that on $\{\tau_{n}<\tau_{n-1}\}$ the process $S_n^w$ cannot explode when $x_{\tau_n} \in E_i$. Since the local martingale part of $S_n^w$, by the McKean argument \cite[Problem 7, p. 47]{McKean} (as a Brownian motion with changed time), cannot explode to $\infty$, it remains to show that the drift part $dA^w_n$ given in Proposition \ref{prop:Sn:SDE} cannot explode. Formally, for every $\omega\in \{\tau_{n}<\tau_{n-1}, x_{\tau_n} \in E_i\}$ there exists $\delta=\delta(\omega)>0$ such that $x_t \in E_i$ for every $t\in(\tau_n-\delta,\tau_n)$, as $E_i$ is open. Thus, our aim is to show that 
    \begin{equation*}
        \int_{\tau_n-\delta}^{\tau_n} A_n^w(s)ds<\infty
    \end{equation*}
 because it is equivalent to 
    \begin{equation*}
        \int_0^{\tau_n} A_n^w(s)ds<\infty
    \end{equation*}
as the function $s\mapsto A_n^w(s)$ is integrable over $[0,\tau_n-\delta]$ as a continuous function on a compact interval. As a consequence, we find that $S_n^w (\tau_n)$ is bounded from above, which is a contradiction that implies that $\pr(\tau_{n}<\tau_{n-1}, x_{\tau_n} \in E_i)=0$. Although we omit the time variable in the following calculations, we assume that our considerations apply to times in the interval $[\tau_n-\delta,\tau_n]$. Finally, in the following arguments, we denote $y=x_{\tau_n}$, which is a point in $E_i\cap \Cbm$. We stress that for all elements of $E_i\cap \Cbm$ the set $\Pzero$ is the same. 

\bigskip

%%%%%%%%%%%%%%%%%%%%%%%%%%%%%%%%%%%%% STEP 1

{\it Step 1.} We begin with the first two parts $A_1$ and $A_2$ given in \eqref{S:A1:form} and \eqref{S:A2:form}, respectively. Note that if in any component of the sum in \eqref{S:A1:form} we find two factors that go to zero, that is, the product $\xas^2\xbs^2$ with $\alpha,\beta\in \Pzero$ ($\alpha$ could be equal to $\beta$),
then this component divided by $(\eA{n})^2$ is generally bounded by $[H^w_+(x)]^{-2}$ and then by $c_3^2$ using \eqref{eq:ineq_en_K}.
This allows us to focus only on the components where elements $\xas^2$ with $\alpha\in\Pzero$ occur individually. Indeed, for every fixed $\alpha\in \Pplus$, in every component of $\eaA{n-1}$ and $\eaA{n}$, we can always find $\xbs^2$ such that $\beta\in \Pzero$. Therefore, in products of the form $\eaA{n-1}\eaA{n-1}$ and $\eaA{n-1}\eaA{n}$, we will always find a product of two such vanishing expressions, which, in light of the previous observation, makes the sum over $\Pplus$ in \eqref{S:A1:form} bounded from above. Thus, we can just focus on
    \begin{equation*}
        \overline{A_1} = \left(\frac{H^w_+(x)}{\eA{n}}\right)^2 \sum_{k=1}^N \sigma^2(x_k)\SaPz(\alpha_k^*)^2   \bigg(\xas^2-\SbPzna\xbs^2\bigg)\/.
    \end{equation*}
% This can be simply rewrite as
%     \begin{equation*}
%        \overline{A_1} = \left(\frac{H^w_+(x)}{\eA{n}}\right)^2 \sum_{\alpha\in\Pzero} \xas^2 \sum_{k=1}^N \sigma^2(x_k)\left(2(\alpha_k^*)^2-\sum_{\beta\in\Pzero}(\beta_k^*)^2\right)\/.
%     \end{equation*}
In a similar way, we have to only deal with
\begin{eqnarray*}
	\overline{A_2} &=& 2\left(\frac{H^w_+(x)}{\eA{n}}\right)^2\sum_{k=1}^N \sigma^2(x_k)\mathop{\sum_{\alpha\in\Pzero}\sum_{\beta\in\Pzero}}_{\alpha\neq \beta} \alpha_k^*\beta_k^* \xas\xbs \/.
\end{eqnarray*}
The first thing to do is replace $\sigma^2(x_k)$ with $\sigma^2(y_k)$ in the sums given above using \ref{eq:sigma:uniform}. Since we have exactly $M-n+1$ elements in $\Pzero$, by the Cauchy-Schwarz inequality, we get the bounds
\begin{eqnarray*}
\bigg|\mathop{\sum_{\alpha\in\Pzero}\sum_{\beta\in\Pzero}}_{\alpha\neq \beta} \alpha_k^*\beta_k^* \xas\xbs \bigg|&\leq&
\bigg(\sum_{\alpha\in\Pzero}|\alpha_k^*|\xas\bigg)^2 \leq  (M-n+1) \sum_{\alpha\in\Pzero}|\alpha_k^*|^2\xas^2\/,
\end{eqnarray*}
which together with \ref{eq:sigma:uniform} gives
\begin{eqnarray*}
    \overline{A_2} &\leq&2\left(\frac{H_+^w(x)}{\eA{n}}\right)^2\sum_{k=1}^N \left(\sigma^2(y_k)+|\sigma^2(x_k)-\sigma^2(y_k)|\right)\mathop{\sum_{\alpha\in\Pzero}\sum_{\beta\in\Pzero}}_{\alpha\neq \beta} \alpha_k^*\beta_k^* \xas\xbs\\
    &\leq&\left(\frac{H_+^w(x)}{\eA{n}}\right)^2\Bigg[\sum_{k=1}^N \sigma^2(y_k)\mathop{\sum_{\alpha\in\Pzero}\sum_{\beta\in\Pzero}}_{\alpha\neq \beta} 2\alpha_k^*\beta_k^* \xas\xbs +\frac{\varepsilon}{4}\sum_{\alpha\in\Pzero}|\alpha^*|^2\xas^2\Bigg].
\end{eqnarray*}
Note that for $\alpha\in\Pzero$ we have $\xas^2 H^w_+(x)\leq \eA{n}$ (see \eqref{eq:ineq_en_K}) and $H^w_+(x)\leq \eaA{n-1}$
, which together with the definition of $\varepsilon$ given in \ref{eq:k:lowerbound} gives that 
    \begin{eqnarray*}
        \frac{\varepsilon}{4}\left(\frac{H_+^w(x)}{\eA{n}}\right)^2\sum_{\alpha\in\Pzero}|\alpha^*|^2\xas^2\leq -\frac{1}{8}A_5\/.
    \end{eqnarray*}
Exactly in the same way we get
    \begin{eqnarray*}
        \overline{A_1}+\frac{1}{4}A_5 &\leq& \left(\frac{H^w_+(x)}{\eA{n}}\right)^2 \sum_{k=1}^N \sigma^2(y_k)\SaPz(\alpha_k^*)^2   \bigg(\xas^2-\SbPzna\xbs^2\bigg)\\
        &=&\left(\frac{H^w_+(x)}{\eA{n}}\right)^2 \sum_{k=1}^N \sigma^2(y_k)  \bigg(\SaPz 2(\alpha_k^*)^2 \xas^2-\SabPz(\alpha_k^*)^2 \xbs^2\bigg)\\
        &=& \bigg(\frac{H^w_+(x)}{\eA{n}}\bigg)^2 \sum_{\alpha\in\Pzero} \xas^2 \sum_{k=1}^N \sigma^2(y_k)\bigg(2(\alpha_k^*)^2-\sum_{\beta\in\Pzero}(\beta_k^*)^2\bigg)\/.
    \end{eqnarray*}
    To get the last expression note that we just collect coefficients appearing with fixed $\xas^2$ and $k$. Every $\xas^2$ appears once with positive $(\alpha_k^*)^2$, but also with $-(\beta_k^*)^2$ as many times as many $\beta\in \Pzero$ different from $\alpha$ we can find.
Let us now focus on the double sum of $2\alpha^*_k\beta^*_k\xas\xbs$ in the upper bound for $\overline{A_2}$ and consider $\alpha, \beta\in\Pzero$ such that $\alpha\neq \beta$. These two roots may contribute to the sum if and only if $\alpha_k\beta_k\neq 0$ for some $k$. In this case it may still happen that $\ab=0$, but this is only if $\alpha=e_i\pm e_j$, $\beta=e_i\mp e_j$ and then we simply have $\alpha_i^*\beta_i^*=-\alpha_j^*\beta_j^*$, $y_i=y_j$, $w_\alpha=w_\beta$ and finally
\begin{equation*}
\sigma^2(y_i)\alpha_i^*\beta_i^*\xas\xbs+\sigma^2(y_j)\alpha_j^*\beta_j^*\xas\xbs =0\/,
\end{equation*}
which means that this kind of pairs $\{\alpha,\beta\}$ does not contribute to the considered sum and we can focus on $\alpha,\beta\in \Pzero$, $\alpha\neq \beta$ such that $\ab\neq 0$.
Fix $\alpha\in \Pzero$. For every $\beta\in \Pzero$ such that $\beta\neq \alpha$ and $\ab\neq 0$ there exists 
\begin{equation*}
\gamma=\gamma_{\alpha,\beta} = \pm\reflect{\alpha}\beta\in R_+\/,
\end{equation*}
i.e., $\gamma$ (up to the sign) is a reflection of $\beta$ in $\alpha$. It is easy to see that $|\beta|=|\gamma|$,  $\ac = \mp \ab \neq 0$ and recall \eqref{eq:ac:form1}, which gives
\begin{equation}
    \label{eq:inner}
    2\ab \xa\xb+	2\ac \xa\xc = \frac{4\xa^2}{|\alpha|^2}.
\end{equation}

Note that condition $\alpha\in\Pzero$ implies $\ya=0$ and consequently $\sigma^2(y_k)=\sigma^2$ for every $k$ such that $\alpha_k^2>0$. It is obvious for $\alpha=e_i$ and $\alpha=e_i-e_j$ and for $\alpha=e_i+e_j$ we simply have $y_i=-y_j=0$.
% \footnote{\nicole{Why =0?}} 
Clearly, the same holds for $y^i$, which is the center of the ball $E^i$ and being used to establish the weights. Thus, we get
\begin{equation*}
    w_\gamma^2 = \sigma^2 |\gamma|^2\/,\quad w_\beta^2 = \sigma^2 |\beta|^2\/,\quad w_\alpha^2 = \sigma^2 |\alpha|^2
\end{equation*}
and since $|\beta|=|\gamma|$ we get $w_\gamma=w_\beta = |\beta|w_\alpha/|\alpha|$. 
% \nicole{Okay, I guess we delete your short calculation to my question since it is easy?}
% \begin{equation*}
%      \jacek{w_\beta = \left(\sum_{k=1}^N \beta_k^2\sigma^2(y^i_k)\right)^{1/2} = \left(\sum_{k=1}^N \beta_k^2\sigma^2\right)^{1/2} = |\sigma| \left(\sum_{k=1}^N \beta_k^2\right)^{1/2} = |\sigma||\beta| = \frac{|\beta|}{|\alpha|}\left(\sum_{k=1}^N \alpha_k^2\right)^{1/2} = \frac{|\beta|w_\alpha}{|\alpha|}}
% \end{equation*}

Taking into consideration only selected $\beta,\gamma$ from the sum over all roots from $\Pzero$ not equal to $\alpha$, but having nonzero inner product with $\alpha$
\begin{equation*}
    \sum_{k=1}^N \sigma^2(y_k) \big(2\alpha_k^*\beta_k^* \xas\xbs+2\alpha_k^*\gamma_k^*\xas\xcs\big)
\end{equation*}
we can use \eqref{eq:inner} to write it as
\begin{equation*}
    \frac{2\sigma^2|\alpha|^2\big(\ab \xa\xb+	\ac \xa\xc\big)}{|\beta|^2 w_\alpha^4 } = \frac{4\sigma^2\xa^2}{|\beta|^2 w_\alpha^4} = \frac{4}{|\alpha|^2|\beta|^2 }\sum_{k=1}^N \sigma^2(y_k)^2(\alpha_k^*)^2 \xas^2.
\end{equation*}
In summary, we see that the expression $\overline{A_2}$ will produce additional positive elements $\sigma^2(y_k)(\alpha_k^*)^2\xas^2$ with coefficient $4/(|\alpha||\beta|)^2$, which appears every time we can find a pair $\{\beta,\gamma\}$ as above. Note that $4/(|\alpha||\beta|)^2=1$ if $|\alpha|=|\beta|=\sqrt{2}$ and $4/(|\alpha||\beta|)^2={2}$ {if either $|\alpha|=1$ and $|\beta|=\sqrt2$ or the opposite; note that $\ab=0$ if $|\alpha|=|\beta|=1$ and $\alpha\neq\beta$.}

Recall that the collision order $m=M-n+1$ indicates the number of elements in $\Pzero$, and consider $n_\alpha$ as the number of appearances of $\xas^2\sigma^2(y_k)(\alpha_k^*)^2$ from the procedure described above. For $\alpha = e_i\pm e_j$ there is at most one pair $\{\beta,\gamma\}$ such that $|\beta|=|\gamma|=1$, which is $\{e_i,e_j\}$ and that will produce the considered expression with coefficient $2$. The rest of the pairs will produce the expression with coefficients equal to $1$. We estimate the number of such pairs roughly by excluding four roots $\alpha = e_i\pm e_j$, $e_i$, $e_j$ and $e_i\mp e_j$, then counting the remaining roots in $\Pzero$ and dividing this number by $2$, since we do not want to count pairs twice. This leads to the upper bound $(m-4)/2$ for the number of such pairs. In summary, we get $n_\alpha\leq 2+(m-4)/2\leq m-2$ as $m\geq 4$ in this case since we have $e_i\pm e_j, e_i, e_j, e_i\mp e_j \in \Pzero$. If there are no pairs of roots with length $1$, but there is at least one pair of length $\sqrt{2}$, then we have the upper bound $(m-1)/2$ for possible pairs and get $n_\alpha\leq (m-1)/2\leq m-2$, where the last inequality follows as $m\geq 3$ in this case. Finally, if $\alpha=e_i$, then every possible pair will produce the expression with coefficient $2$, but there are at most $(m-2)/2$ such pairs, because we have to exclude all other roots of length $1$ as they are orthogonal to $\alpha$. This also gives $n_\alpha\leq m-2$.

We can now go back to the bounds of $\overline{A_1}+\overline{A_2}+\frac{1}{2}A_5$ and get
\begin{eqnarray*}
        \overline{A_1}+\overline{A_2}+\frac{3}{8}A_5 &\leq& \left(\frac{H^w_+(x)}{\eA{n}}\right)^2\sum_{\alpha\in\Pzero} \xas^2 \sum_{k=1}^N \sigma^2(y_k)\left((\alpha_k^*)^2(2+n_\alpha)-\sum_{\beta\in\Pzero} (\beta_k^*)^2\right)\\
        &\stackrel{\eqref{eq:weights}}{\leq}& \left(\frac{H^w_+(x)}{\eA{n}}\right)^2\sum_{\alpha\in\Pzero} \xas^2 \left[m\frac{\sum_{k=1}^N \sigma^2(y_k)\alpha_k^2}{\sum_{k=1}^N \sigma^2(y_k^i)\alpha_k^2}-\sum_{\beta\in\Pzero}\frac{\sum_{k=1}^N \sigma^2(y_k)\beta_k^2}{\sum_{k=1}^N \sigma^2(y_k^i)\beta_k^2}\right]\\
        &\stackrel{\text{\ref{eq:sigma2:ratio}}}{\leq}&  \left(\frac{H^w_+(x)}{\eA{n}}\right)^2\sum_{\alpha\in\Pzero} \xas^2\left[m\left(1+\frac{\varepsilon_0}{8M}\right)-m\left(1-\frac{\varepsilon_0}{8M}\right)\right] \\
        &=&\frac mM\cdot \frac{\varepsilon}{4}\left(\frac{H^w_+(x)}{\eA{n}}\right)^2\sum_{\alpha\in\Pzero} \min_{\beta\in\Pzero}|\beta^*|^2\xas^2\\
        &\leq& \frac{\varepsilon}{4}\left(\frac{H^w_+(x)}{\eA{n}}\right)^2\sum_{\alpha\in\Pzero} |\alpha^*|^2\xas^2 \leq \frac{\varepsilon}{4} \frac{1}{\eA{n}}\sum_{\alpha\in\Pzero}|\alpha^*|^2\eaA{n-1}\/,
    \end{eqnarray*}
which directly gives, by using \ref{eq:k:lowerbound} and $A_5\leq 0$, that
    \begin{eqnarray*}
        \overline{A_1}+\overline{A_2}+\frac{3}{4}A_5 &\leq & \frac{1}{\eA{n}}\sum_{\alpha\in\Pzero}|\alpha^*|^2\eaA{n-1}\left(\frac{\varepsilon- 2k_\alpha(x)}{4} \right)\leq 0\/.
    \end{eqnarray*}

%%%%%%%%%%%%%%%%%%%%%%%%%%%%%%%%%%%%% STEP 2
{\it Step 2}. 
    In the next step, we show that the drift coefficients $b(x_k)dt$ appearing in \eqref{eq:x:SDE} cannot cause multiple collisions. More precisely, the drift part $A_5dt$ in the stochastic description of $dS_n^w$, which comes from the repulsive part prevents collisions that could be caused by the drift part $A_4dt$ described by the function $b$ in the following way 
    \begin{eqnarray*}
    A_4+\frac{1}{4} A_5 &\leq& \frac{2}{\eA{n}}\sum_{k=1}^N |b(x_k)|\SaR |\alpha_k^*|\xas\eaA{n-1}- \frac{1}{2\eA{n}}\sum_{\aR} |\alpha^*|^2\eaA{n-1}k_\alpha(x)\\
	 &\leq& \frac{2c_1}{\eA{n}}\bigg(\sum_{\alpha\in \Pplus}+\sum_{\alpha\in \Pzero}\bigg) \xas\,\eaA{n-1}- \frac{1}{2\eA{n}}\sum_{\aR} |\alpha^*|^2\eaA{n-1}k_\alpha(x)\\
	    &\leq&2c_1c_2\sum_{\alpha\in \Pplus} \frac{\xas^2\eaA{n-1}}{\eA{n}} + \sum_{\alpha\in \Pzero}\frac{[4c_1\xas-  |\alpha^*|^2k_\alpha(x)]\eaA{n-1}}{2\eA{n}}\/.
\end{eqnarray*} 
As we have previously observed, every component of the first sum is simply bounded by $1$. To deal with the other sum, it is enough to see that $4c_1\xas-|\alpha^*|^2 k_\alpha(x)$ becomes negative near $y$, because $k_\alpha(y)$ is strictly positive by \ref{ass:sigma:k:positive} and $\xas$ goes to $0$ as $\left<y,\alpha^*\right>=0$ for every $\alpha\in \Pzero$.

\bigskip

%%%%%%%%%%%%%%%%%%%%%%%%%%%%%%%%%%%%% STEP 3
{\it Step 3}. In the last step, we have to show that the two remaining parts of the drift $A_3$ and $A_6$
% \footnote{\nicole{and $A_6$}} 
do not cause an explosion. Here, we do not need to compensate for those parts with the help of $A_5$.
% \footnote{\nicole{$A_5$}} 
Indeed, taking into account every single component of the inner sum defining $A_3$ in \eqref{S:A3:form}, which is
\begin{equation}
    \label{eq:A22:elements}
    \alpha_k^* \beta_k^* \frac{2\xas\xbs\eabA{n}\eabA{n-2}}{(\eA{n})^2}\/,\qquad \alpha,\beta \in R_+\/,\alpha\neq \beta\/,
\end{equation}
we can consider all possible scenarios for $\alpha$ and $\beta$. If both of them are in $\Pzero$, then by \eqref{eq:ineq_en_K} we get
 \begin{equation*}
     \frac{2\xas\xbs}{\eA{n}} \leq \frac{\xas^2+\xbs^2}{\eA{n}} \leq c_3\frac{(\xas^2+\xbs^2)H_+^w(x)}{\eA{n}}\leq c_3\/, 
 \end{equation*}
 which together with the obvious bound $\eabA{n}\leq \eA{n}$ gives finiteness of \eqref{eq:A22:elements}. If at least one of the roots $\alpha$ or $\beta$ is in $\Pplus$, then all components of $\eabA{n}$ must contain a product of the form $\xcs^2\xds^2$ for two different roots $\gamma,\delta\in \Pzero$. Since, as we have seen just before, for every $\gamma\in \Pzero$ we have $\xcs^2 \leq c_3 \eA{n}$, this implies the finiteness of $\eabA{n}/(\eA{n})^2$ and consequently \eqref{eq:A22:elements} as well. 

    \medskip

 The final part relates to $A_6$, which is given by \eqref{S:A6:form} and can be slightly rewritten as follows
 \begin{equation*}
    A_6 = - 2\sum_{\bR} k_\beta(x)\SaRnb	\frac{\abs\xas}{\xbs} \frac{\eaA{n-1}}{\eA{n}}\/.
 \end{equation*}
 First observe that if $\beta \in \Pplus$ then we might, at most, get a singularity of the form $1/\xbs$, which is integrable and can not cause an explosion. Thus, from now on, we fix $\beta\in \Pzero$ and focus on the inner sum and take $\alpha\in R_+$ different from $\beta$, and we only consider those roots for which $\ab\neq 0$. Then, recall that there exists $\gamma = \gamma_{\beta,\alpha}\in R_+$ such that $\gamma=\pm\reflect{\beta}\alpha$.
 Moreover, we can write $\eaA{n-1} = \xbs^2\eabA{n-2}+\eabA{n-1}$, and then we symmetrize over $\alpha$ and $\gamma$ in $\eabA{n-1} = \xcs^2\eabcA{n-2}+\eabcA{n-1}$. Collecting the components for $\alpha$ and $\gamma$ together and applying the facts that the reflection of $\gamma$ in $\beta$ is again $\alpha$, and $\beta\in\Pzero$ implies $w_\alpha=w_\gamma$, we can reduce our consideration into three parts
 \begin{eqnarray*}
     A_6^{(1)} &=& -2\sum_{\beta\in \Pzero}\frac{k_\beta(x)}{\xbs}\SaRnb\abs \xas\xbs^2 \frac{\eabA{n-2}}{\eA{n}}\/,\\
     A_6^{(2)} &=&-2\sum_{\beta\in \Pzero}k_\beta(x) \sum_{(\alpha,\gamma)\in R_+(\beta)}\xas\xcs\frac{\abs\xcs+\bcs\xas}{\xbs} \frac{\eabcA{n-2}}{\eA{n}}\\
      &=& -2\sum_{\beta\in \Pzero}k_\beta(x) \sum_{(\alpha,\gamma)\in R_+(\beta)}\frac{\xas\xcs}{w_\alpha w_\gamma}\frac{\ab\xc+\bc\xa}{\xb} \frac{\eabcA{n-2}}{\eA{n}}\\
      &\stackrel{\eqref{eq:ac:form2}}{=}& -4\sum_{\beta\in \Pzero}k_\beta(x) \sum_{(\alpha,\gamma)\in R_+(\beta)}\frac{\xas\xcs}{w_\alpha w_\gamma}\frac{\ab\bc}{|\beta|^2} \frac{\eabcA{n-2}}{\eA{n}}\/,\\
     A_6^{(3)} &=&-2\sum_{\beta\in \Pzero}k_\beta(x) \sum_{(\alpha,\gamma)\in R_+(\beta)}\frac{\abs\xas+\bcs\xcs}{\xbs} \frac{\eabcA{n-1}}{\eA{n}}\\
     &=&-2\sum_{\beta\in \Pzero}k_\beta(x) \sum_{(\alpha,\gamma)\in R_+(\beta)}\frac{1}{w_\alpha w_\gamma}\frac{\ab\xa+\bc\xc}{\xb} \frac{\eabcA{n-1}}{\eA{n}}\\
     &\stackrel{\eqref{eq:ac:form1}}{=}&-4\sum_{\beta\in \Pzero}k_\beta(x) \sum_{(\alpha,\gamma)\in R_+(\beta)}\frac{1}{w_\alpha w_\gamma |\beta|^2}\frac{\eabcA{n-1}}{\eA{n}}\/,
 \end{eqnarray*}
 where the notation $R_+(\beta)$ is introduced in \eqref{eq:Rplus}.

 Here, as indicated, we have used formulas from Proposition \ref{prop:AP:alfagamma}. Starting from the simplest one, observe that the last expression is just non-positive. To deal with the final form of $A_6^{(2)}$, observe that since $\beta\in\Pzero$, then for $(\alpha,\gamma)\in R_+(\beta)$ either both are in $\Pzero$ or both are in $\Pplus$. In the first case, we simply have 
 \begin{equation*}
    \frac{2\xas\xcs}{\eA{n}}   \leq \frac{\xas^2+\xcs^2}{\eA{n}} 
 \end{equation*}
and the last ratio is bounded. If $\alpha,\gamma\in \Pplus$, then there are only $(n-3)$ roots left in $\Pplus$ and consequently, every component of $\eabcA{n-2}$ consists of at least one factor $\xds^2$, where $\delta\in\Pzero$, which, similarly to before, makes the ratio $\eabcA{n-2}/\eA{n}$ bounded. Finally, since $\xbs^2/\eA{n}$ is bounded and $k_\beta(x)/\xbs$ is integrable {due to the fact that \eqref{eq:x:SDE} has a solution and therefore all of its terms are integrable,} all components of $A_6^{(1)}$ are integrable and consequently cannot explode.

\medskip

In summary of all the considerations conducted, we have just shown that from $n=M-1$ to $n=1$, the event $\{\tau_{n}<\tau_{n-1}\}$ has probability $0$. Recall that $e_0\equiv1$, which finishes the proof as then $\tau_0=\lifetime$. 
\end{proof}

The remainder, which completes the proof of Theorem \ref{th:multiplecollisions}, concerns showing that the process enters the interior of the Weyl chamber immediately after starting.

\begin{proposition}\label{pr:enteringW}
    Assume that \ref{ass:sigma:b:continuous} and \ref{ass:sigma:k:positive} hold. For $x=(x_1,\ldots,x_N)$ being a solution to \eqref{eq:x:SDE} define
    \begin{equation*}
        \tau_{\C} = \inf\{t>0: x(t)\in \C\}\/.
    \end{equation*}
    Then for every $x(0) \in \Cc$ we have $\tau_{\C} = 0$ a.s.
\end{proposition}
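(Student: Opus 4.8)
The plan is to avoid tracking the excursions of each coordinate directly. A Bessel-type comparison for $u_\alpha(t):=\xta$ would handle a single boundary face (near $\{\xta=0\}$ the self-repulsion $k_\alpha|\alpha|^2/\xta$ forces a sub-critical Bessel behaviour with dimension $>1$), but at a corner several $\xta$ vanish at once and their singular drifts interact, and one would still have to arrange that all simple coordinates become positive \emph{simultaneously}. Instead I would show that, almost surely, the boundary is visited only on a Lebesgue-null set of times. Since the process stays in $\Cc$, we have $\xta\ge 0$ for every $\aR$ and all $t$, so $x(t)\in\C$ fails precisely when some $\xta=0$. Hence
\[
\{t>0: x(t)\notin\C\}=\bigcup_{\aR}\{t>0:\xta=0\},
\]
a finite union, and it suffices to show each set $\{t>0:\xta=0\}$ is Lebesgue-null: the complement $\{t>0:x(t)\in\C\}$ then has full measure in every interval $(0,\varepsilon)$, is in particular nonempty there, and so $\tau_{\C}<\varepsilon$ for all $\varepsilon>0$, i.e. $\tau_{\C}=0$.

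Fix $\aR$ and set $u_\alpha(t)=\xta$. As a linear functional of the continuous semimartingale $x$, the process $u_\alpha$ is itself a continuous semimartingale up to $\lifetime$: its finite-variation part is integrable because, by \ref{ass:G:sol}, \eqref{eq:x:SDE} has a solution and therefore all of its terms are integrable. Its quadratic variation is absolutely continuous with density $\Sni\alpha_i^2\sigma^2(x_i(t))$; denote this process by $[u_\alpha]_t$. The occupation-times formula for continuous semimartingales, applied at the level $0$, gives
\[
\int_0^t \mathbf 1_{\{u_\alpha(s)=0\}}\,d[u_\alpha]_s=\int_\R \mathbf 1_{\{0\}}(a)\,L_t^a\,da=0,
\]
where $L^a$ is the local time of $u_\alpha$ at level $a$, the last equality holding because a single point is Lebesgue-null in the variable $a$. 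Note that only the quadratic-variation part enters here, so the singular drift is irrelevant to this step.

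It remains to remove the weight $d[u_\alpha]_s=\Sni\alpha_i^2\sigma^2(x_i(s))\,ds$. Fix $T<\lifetime$; by continuity the path $x([0,T])$ is compact, so by \ref{ass:sigma:b:continuous} and \ref{ass:sigma:k:positive} there is a constant $c=c(T)>0$ with $\sigma^2(x_i(s))\ge c$ for all $s\le T$ and all $i$. Since $\alpha\neq 0$, this gives $\Sni\alpha_i^2\sigma^2(x_i(s))\ge c|\alpha|^2>0$, whence
\[
c|\alpha|^2\int_0^T \mathbf 1_{\{u_\alpha(s)=0\}}\,ds\le \int_0^T \mathbf 1_{\{u_\alpha(s)=0\}}\,d[u_\alpha]_s=0,
\]
so $\{s\le T:u_\alpha(s)=0\}$ is Lebesgue-null. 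Letting $T\uparrow\lifetime$ along a localizing sequence, and then taking the finite union over $\aR$, establishes the reduction of the first paragraph and yields $\tau_{\C}=0$ a.s. The only point genuinely requiring care is the one invoked in the second paragraph—that $u_\alpha$ really is a continuous semimartingale with the stated quadratic variation right up to the boundary, despite the singular drift—and this is exactly where the existence hypothesis \ref{ass:G:sol} (integrability of every term of the SDE) is needed; everything else is the standard fact that a continuous semimartingale spends zero $d[u_\alpha]$-measure at any fixed level, together with the strict positivity of $\sigma$.
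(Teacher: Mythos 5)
Your proof is correct, and it takes a genuinely different route from the paper's. The paper stays inside its symmetric-polynomial framework: it writes out the It\^o decomposition of each $\eAc{n}$ (mirroring Proposition~\ref{prop:eA:SDE}), removes the singular terms $k_\beta(x)/\xb$ by the reflection pairing \eqref{eq:ac:form1}--\eqref{eq:ac:form2}, and argues by contradiction that if some $\eAc{n}$ sat at zero on a time interval, every term of its drift would have to vanish there except the nonnegative repulsion and diffusion terms, forcing each $\eaAc{n-1}$ to vanish, hence $\eAc{n-1}\equiv 0$ on the interval, and a downward induction collapses to $\eAc{0}\equiv 1=0$; the engine of that argument is the strict positivity of $k_\alpha$, i.e.\ the repulsive drift itself. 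You instead treat each boundary face separately through the scalar semimartingale $u_\alpha=\xa$ and invoke the occupation-times formula: a continuous semimartingale accrues zero $d\langle u_\alpha\rangle$-measure at the fixed level $0$, and the ellipticity bound $\sigma^2\geq c>0$ on the compact range of the path up to a fixed $T<\lifetime$ (the only place you use \ref{ass:sigma:k:positive}; the positivity of $k_\alpha$ enters your argument only through the existence of the solution) converts this to zero Lebesgue time, after which the finite union over $R_+$ and a density argument give $\tau_{\C}=0$. Your conclusion is in fact stronger than what the paper extracts --- the total boundary occupation time is Lebesgue-null, not merely that no $\eAc{n}$ rests at zero on an interval --- and it needs less structure, since the repulsion plays no role; what the paper's argument buys in exchange is that it is self-contained within the algebraic machinery already built for Theorem~\ref{th:multiplecollisions} and exhibits the explicit mechanism (the strictly positive drift contribution $2\sum_{\aR}\eaAc{n-1}|\alpha|^2 k_\alpha(x)$) by which the process is pushed off every stratum of $\Cb$. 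The one step in your write-up that genuinely needed justification --- that $u_\alpha$ is a continuous semimartingale with the stated bracket right up to $\Cb$ despite the singular drift --- you correctly ground in the integrability of all terms of \eqref{eq:x:SDE} implicit in \ref{ass:G:sol}, which is the same fact the paper itself invokes when handling $A_6^{(1)}$ in the proof of its main proposition.
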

\begin{proof}
    Let us denote by $M$ the number of roots in $R_+$ and consider the symmetric polynomials $\eAc{n}$ for $n=1,\ldots,M$. We have
    \begin{eqnarray*}
        d\eAc{n} &=& 2\sum_{k=1}^N \sum_{\aR} \alpha_k \xa \eaAc{n-1}\sigma(x_k)dB_k
            +2\sum_{k=1}^N \sum_{\aR} \alpha_k \xa \eaAc{n-1} b(x_k)dt\\
        &&  +2\sum_{\aR}   \eaAc{n-1}|\alpha|^2 k_\alpha(x)\,dt + 2\sum_{\beta\in R_+} \frac{k_\beta(x)}{\xb} \mathop{\sum_{\aR}}_{\alpha \neq \beta , \ab\neq 0}\ab \xa \eaAc{n-1} dt \\
        &&  +2 \sum_{k=1}^N \mathop{\sum_{\aR}\sum_{\bR}}_{\beta\neq \alpha}\alpha_k\beta_k\xa\xb\eabAc{n-2}\sigma^2(x_k)dt \\
        &&  + \sum_{k=1}^N \sum_{\aR}\alpha_k^2 \eaAc{n-1}\sigma^2(x_k)dt\/. 
    \end{eqnarray*}
    As previously, to remove the singularity in $k_\beta/\xb$ we use \eqref{eq:ac:form1} and \eqref{eq:ac:form2} to get for fixed $\beta\in R_+$ that 
    \begin{eqnarray*}
        2 \mathop{\sum_{\aR}}_{\alpha \neq \beta , \ab\neq 0}\ab \xa \eaAc{n-1} &=&  \sum_{(\alpha,\gamma)\in R_+(\beta)}(\ab\xa+\cb\xc)\eacAc{n-1}\\
        &&+\sum_{(\alpha,\gamma)\in R_+(\beta)}\xa\xc(\ab\xc+\cb\xa)\eacAc{n-2}\\
        &\underset{\eqref{eq:ac:form2}}{\overset{\eqref{eq:ac:form1}}{=}}& \frac{2\xb}{|\beta|^2}\sum_{(\alpha,\gamma)\in R_+(\beta)} (\eacAc{n-1}+\ab\bc\xa\xc \eacAc{n-2}).
    \end{eqnarray*}
    
    Assume now that one of the processes $\eA{n}$, with $n=1,\ldots, M$, stays at zero for some positive time interval with positive probability. This means that the drift part of $\eA{n}$ cancels on this time interval. On the other hand, all the products of $\xa$ of length $m$, where each of them appears only once becomes zero as well. In particular, we have $\xa \eaAc{n-1} = 0$ and $\xa\xb\eabAc{n-2} = 0$ for every $\alpha,\beta \in R_+$ and $\alpha\neq \beta$. Consequently, positivity of $k_\alpha$ and $\sigma^2$ imply that for every $\alpha\in R_+$ we have
        \begin{equation*}
            \sum_{\aR}   \eaA{n-1} |\alpha|^2 \ = 0
        \end{equation*}
    on the positive time interval with positive probability, i.e., $\eA{n-1}=0$ and inductively 
        \begin{equation*}
            \eA{M} = \ldots = \eA{1} = \eA{0} = 0
        \end{equation*}
    Since $\eA{0}\equiv 1$ we get a contradiction. This means that the process enters immediately the interior of the Weyl chamber.
\end{proof}

%%%%%%%%%%%%%%%%%%%%%%%%%%%%%%%%%%%%%%%%%%%%%%%%%%%%%%%%%%%%%%%%%%%%%%%%%%%%%%%%%%%%%%%%%%%%%%%%%%%%%%%%%%%%%%%%%%%%%%%%%%%%%%%%%%%%%%%%%%%%%%%%%%%%%%%%%%%%%%%%%%%%%%%%%%%%%%%%%%%%%%%%%%%%%%%%%%%%%%%%%%%%%%%%%%%%%%%%%%%%%%%%%%%%%%%%%%%%%%%%%%%%%%%%%%%%%%%%%%%%%%%%%%%%%%%%%%%%%%%%%%%%%%%%%%%%%%%%%%%%%%%%%%%%%%%%%%%

\section{Hausdorff dimension bounds}\label{sec:dimensionbounds}

In order to find bounds on the Hausdorff dimension of collision times, we adopt the following general strategy. First, we choose a functional of our general particle system, the squared projection of $x$ onto a simple root $\beta$, that cancels whenever there is a collision. This functional has upper and lower bounds given by a time-transformed Bessel process. Then, by verifying that the transformation in each case is bi-Lipschitz, we can relate the Hausdorff dimensions of the functional and the time-transformed Bessel process; the bi-Lipschitz property is a direct consequence of Theorem~\ref{th:multiplecollisions}. Finally, we use the Hausdorff dimension of times where Bessel processes hit zero to collect our results. We note here that the Hausdorff dimension bounds given here are valid for every starting point in $\Cc$. Indeed, the only problematic starting points are those in $\Cb$, namely initial configurations where two or more particles start from the same position, but by Proposition~\ref{pr:enteringW} we know that particles separate immediately, as the process leaves the boundary of $\C$ immediately after starting, and the contribution of such a starting point does not change the dimensionality of the collision time set.

Let us emphasize that for the calculations that follow, we reset the weights $w$ as
        \[w_\alpha:=|\alpha|\]
so that $\alpha^*=\alpha/|\alpha|$ and $|\alpha^*|=1$. A critical fact that we use in our derivations follows immediately from Theorem~\ref{th:multiplecollisions} and is given below.
\begin{corollary}\label{cor:projectiondistance}
    Consider the projections $\xas\geq0$ for $\alpha\in R_+$ after the process $x$ has started. There exists a sufficiently small number $\varepsilon >0$ for which, whenever the smallest projection $\xzs$ satisfies $0\leq\xzs<\varepsilon$, then it is the unique projection that satisfies this inequality, and a second-smallest projection $\xbs$ satisfies $\xbs\geq\xzs+\delta(\epsilon)$ almost surely, where $\delta(\varepsilon)>0$ has a positive limit as $\varepsilon\to0$. 
\end{corollary}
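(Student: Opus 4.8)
The plan is to obtain the statement as a direct consequence of Theorem~\ref{th:multiplecollisions} via a continuity-and-compactness argument, with no further input about the dynamics. First I would encode the quantity of interest as a function on the chamber. For $y\in\Cc$, with $\alpha^*=\alpha/|\alpha|$ as fixed just above the statement, let $f_{(1)}(y)\le f_{(2)}(y)\le\dots\le f_{(M)}(y)$ be the increasing reordering of the finite family $\{\xas:\aR\}$, and set $g(y):=f_{(2)}(y)$, the second-smallest projection. Since each order statistic of finitely many continuous functions is continuous, $g$ is continuous on $\Cc$. The key observation is the identification of its zero set: because every projection $\xas$ is nonnegative on $\Cc$, one has $f_{(1)}(y)\ge 0$, so $g(y)=0$ forces $f_{(1)}(y)=f_{(2)}(y)=0$, i.e. at least two projections vanish. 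Hence $g(y)=0$ exactly when $y\in\bigcup_{m\geq 2}\Cbm$, and $g(y)>0$ is precisely the assertion that $y$ is not a multiple-collision point.

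Next I would feed the path into $g$. Theorem~\ref{th:multiplecollisions} states that, almost surely, $\xta^2+\xtb^2>0$ for every pair of distinct $\alpha,\beta\in R_+$ and all $t\in(0,\lifetime)$; equivalently $x(t)\notin\bigcup_{m\geq 2}\Cbm$, so that $g(x(t))>0$ for every $t\in(0,\lifetime)$. I would then localize: fix a compact interval $[t_0,T]\subset(0,\lifetime)$. The map $t\mapsto g(x(t))$ is continuous and strictly positive, hence attains a minimum $\delta_0>0$ on $[t_0,T]$. This yields the uniform lower bound $g(x(t))\ge\delta_0$, i.e. the second-smallest projection stays bounded away from zero on $[t_0,T]$ even while the smallest one, $f_{(1)}(x(t))=\xzs$, may approach zero.

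Both conclusions of the corollary then fall out with $\delta_0$ in hand. Choose any $\varepsilon<\delta_0$. If the smallest projection satisfies $\xzs<\varepsilon<\delta_0$, then since every projection other than $\xzs$ is at least $g(x(t))\ge\delta_0>\varepsilon$, the root $\zeta$ is the unique one whose projection is below $\varepsilon$, giving uniqueness. For the gap, the second-smallest projection $\xbs=g(x(t))$ obeys $\xbs\ge\delta_0$, whence $\xbs-\xzs\ge\delta_0-\xzs>\delta_0-\varepsilon$; setting $\delta(\varepsilon):=\delta_0-\varepsilon$ gives $\xbs\ge\xzs+\delta(\varepsilon)$ with $\delta(\varepsilon)\to\delta_0>0$ as $\varepsilon\to0$, exactly as claimed.

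The only genuinely delicate point, and thus the main obstacle, is the localization and its interface with the phrase ``after the process has started'': the constant $\delta_0$ (and hence the admissible $\varepsilon$) is path-dependent and a priori degrades as $t_0\to0$ or $T\to\lifetime$. Near $t=0$ I would invoke Proposition~\ref{pr:enteringW}, which guarantees that the process leaves $\Cb$ immediately, so that for each realization one may pick $t_0>0$ with $x(t_0)\in\C$ and run the compactness argument on $[t_0,T]$ for any $T<\lifetime$; the statement is then understood with (random) constants valid on compact subintervals, which is precisely the form required for the localized Bessel comparison in Section~\ref{sec:dimensionbounds}. Secondary routine points to record are the continuity of the order statistic $g$ and the identification $\{g=0\}=\bigcup_{m\geq 2}\Cbm$, both of which I expect to be immediate once the nonnegativity of the projections on $\Cc$ is noted.
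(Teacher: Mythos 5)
Your proof is correct, and it reaches the statement by a genuinely different route than the paper. The paper's proof is local in time: it fixes a single collision time $t$, uses Theorem~\ref{th:multiplecollisions} to conclude that exactly one projection $\xtzs$ vanishes there, sets $d:=\min_{\alpha\in R_+\setminus\{\zeta\}}\inner{x(t)}{\alpha^*}>0$, and then exploits path continuity on the excursion interval $[t_{\varepsilon-},t_{\varepsilon+}]$ straddling $t$ to keep the second-smallest projection within $r(\varepsilon)$ of $d$, obtaining $\delta(\varepsilon)=d-r(\varepsilon)-\varepsilon\to d>0$. You instead package the second-smallest projection as a continuous order statistic $g=f_{(2)}$ on $\Cc$, identify $\{g=0\}=\bigcup_{m\geq 2}\Cbm$ (correct, since nonnegativity of the projections on $\Cc$ forces $f_{(1)}=0$ whenever $f_{(2)}=0$), deduce $g(x(t))>0$ on $(0,\lifetime)$ from Theorem~\ref{th:multiplecollisions}, and extract a uniform minimum $\delta_0>0$ on a compact window $[t_0,T]\subset(0,\lifetime)$ by compactness, ending with $\delta(\varepsilon)=\delta_0-\varepsilon$. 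What each approach buys: yours yields a single pair $(\varepsilon,\delta(\varepsilon))$ valid simultaneously for \emph{all} collisions in a compact window, which is cleaner and in fact slightly stronger than the per-collision statement the paper proves, and it still suffices for the excursion-interval applications in Section~\ref{sec:dimensionbounds} because every collision time lies in such a window; the price, which you correctly flag, is that $\delta_0$ is path- and window-dependent and degrades as $t_0\to 0$ or $T\to\lifetime$. The paper's per-collision constants instead track the actual gap $d$ at the given collision time, which matches exactly how the corollary is invoked around each collision in the proofs of Theorem~\ref{th:Hausdorffupper} and Lemma~\ref{th:Hausdorfflowerweak}. One minor remark: your appeal to Proposition~\ref{pr:enteringW} is not strictly necessary --- any $t_0\in(0,\lifetime)$ already satisfies $g(x(t_0))>0$ by Theorem~\ref{th:multiplecollisions}, even if $x(t_0)\in\Cb$ at a simple collision; the proposition only rules out a multiple-collision initial configuration at $t=0$, which your restriction to $t\geq t_0>0$ handles in any case.
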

\begin{proof}
    Because $\varepsilon>0$ is arbitrarily small, we only need to think of the situation where a collision occurs. Theorem~\ref{th:multiplecollisions} states that all collisions of $x$ with $\partial W$ are simple. Therefore, at any collision time, say $t>0$, there exists exactly one root $\zeta\in R_+$ such that $\xtzs=0$, and $\xtas>0$ for $\alpha\in R_+\backslash\{\zeta\}$ almost surely. Because $x$ is a continuous Markov process, all polynomials of $x$, and in particular all projections $\xas$ are continuous Markov processes. Only for this proof, define $$d:=\min_{\alpha\in R_+\backslash\{\zeta\}}\inner{x(t)}{\alpha^*}$$ and the times
    \begin{align*}
        t_{\varepsilon+}&:=\inf\{s>t:\xszs\geq\varepsilon\},&t_{\varepsilon-}&:=\sup\{s<t:\xszs\geq\varepsilon\}.
    \end{align*}
    Denote a second-smallest projection by $\xtbs$. When $s\in[t_{\varepsilon-},t_{\varepsilon+}]$, we see that $0\leq\xszs\leq\varepsilon$ and $d-r(\varepsilon)\leq\xsbs\leq d+r(\varepsilon)$, with $r(\varepsilon)\to 0$ as $\varepsilon\to0$ because $\xbs$ is continuous. Therefore, we have $\xsbs-\xszs\geq d-r(\varepsilon)-\varepsilon=:\delta(\varepsilon)$. As $\varepsilon\to0$, $\delta(\varepsilon)$ becomes positive and tends to $d>0$.
\end{proof}

\subsection{Upper bound}

    \begin{proof}[Proof of Theorem~\ref{th:Hausdorffupper}]
        Let us recall that a collision occurs if and only if there exists a root $\beta\in R_+$ such that $\xb=0$. Let us also recall that $\beta\leq\alpha$  implies $\xb\leq\xa$, and the minimal roots are all simple roots by \ref{prop:simpleroots:linearcombination}. So we can simply focus on the \emph{squared} projections onto a simple root $\beta$,
        \[y_\beta:=\xbs^2\]
        to study the collision times.
        The SDE of $y_\beta$ reads
        \begin{align}
            d y_\beta &= 2\xbs\inner{\beta^*}{dx}+\inner{\beta^*}{dx}^2\notag\\
            &= 2\xbs\Sni\beta_i^*\big(\sigma(x_i)\, dB_i+b(x_i)\, dt\big)+2\xbs\SaR k_\alpha(x)\frac{\inner{\alpha^*}{\beta^*}}{\xas}\, dt+\Sni(\beta_i^*)^2 \sigma^2(x_i)\, dt.\notag
        \end{align}
        The quadratic variation is
        \[4\xbs^2\Sni(\beta_i^*)^2\sigma^2(x_i)\, dt=4y_\beta\Sni(\beta_i^*)^2\sigma^2(x_i)\, dt,\]
        so by the Lévy characterization theorem, and introducing the new Wiener process $W$, we can rewrite the martingale part as
        \[2\sqrt{y_\beta}\sqrt{\Sni(\beta_i^*)^2\sigma^2(x_i)}\, dW.\]
        In addition, we have
        \begin{align*}
            2\xbs\SaR k_\alpha(x)\frac{\inner{\alpha^*}{\beta^*}}{\xas}=2k_\beta(x)+2\sqrt{y_\beta}\SaRnb k_\alpha(x)\frac{\inner{\alpha^*}{\beta^*}}{\sqrt{y_\alpha}},
        \end{align*}
        so the SDE becomes
        \begin{align*}
            dy_\beta &= 2\sqrt{y_\beta}\sqrt{\Sni (\beta_i^*)^2\sigma^2(x_i)}\, dW+2\sqrt{y_\beta}\Sni\beta_i^*b(x_i)\, dt+2k_\beta(x)\, dt\\
            &\quad+2\sqrt{y_\beta}\SaRnb k_\alpha(x)\inner{\alpha^*}{\beta^*}\frac{dt}{\sqrt{y_\alpha}}+\Sni(\beta_i^*)^2\sigma^2(x_i)\, dt.
        \end{align*}
        Now, we perform the following time change,
        \[s=\Theta(t):=\int_0^tC_\beta(\tau)\, d\tau,\quad C_\beta(t):= \Sni(\beta_i^*)^2\sigma^2(x_i(t))>0,\]
        which is bi-Lipschitz thanks to \ref{ass:sigma:k:positive}.
        The SDE in $s$-time reads
        \begin{align}
            dy_\beta &= 2\sqrt{y_\beta}\, dW+2\frac{\sqrt{y_\beta}}{C_\beta}\Sni\beta_i^*b(x_i)\, ds+2\frac{k_\beta(x)}{C_\beta}\, ds+2\sqrt{y_\beta}\SaRnb \frac{k_\alpha(x)}{C_\beta}\inner{\alpha^*}{\beta^*}\frac{ds}{\sqrt{y_\alpha}}+\, ds\notag\\
            &= 2\sqrt{y_\beta}\, dW+2\frac{\sqrt{y_\beta}}{C_\beta}\bigg(\Sni\beta_i^*b(x_i)+\SaRnb k_\alpha(x)\inner{\alpha^*}{\beta^*}\frac{1}{\sqrt{y_\alpha}}\bigg)\, ds +\bigg(2\frac{k_\beta(x)}{C_\beta}+1\bigg)\,ds.\label{eq:sqprojtobound}
        \end{align}
        Our task now is to find an appropriate lower bound for this process. Let us define
        \[\check{\eta}_\beta:=\inf_{y\in W}\frac{k_\beta(y)}{\Sni(\beta_i^*)^2\sigma^2(y_i)}\]
        in order to bound the second drift term as follows,
        \[\bigg(2\frac{k_\beta(x)}{C_\beta}+1\bigg)\geq(2\check\eta_\beta+1).\]
        Next, we introduce the constants
        \begin{align}
            \hat{b}:=\sup_{y\in D}\left|\frac{b(y)}{\sigma^2(y)}\right|,\label{eq:bhat}
        \end{align}
        and
	\begin{align}
			c_R&:=\max_{\substack{\alpha\in R_+\\i\in\{1,\ldots,N\}:\\\alpha_i\neq 0}}\left|\frac{1}{\alpha^*_i}\right|.\label{eq:constcr}
	\end{align}
        which allow us to write
        \[\frac{1}{C_\beta}\Sni\beta_i^*b(x_i)=\frac{1}{C_\beta}\Sni(\beta_i^*)^2\sigma^2(x_i)\frac{1}{\beta_i^*}\frac{b(x_i)}{\sigma^2(x_i)}\geq -\frac{c_R\hat{b}}{C_\beta}\Sni(\beta_i^*)^2\sigma^2(x_i)=-c_R\hat{b}.\]
        We also define
        \begin{align}
            \tilde{\eta}_\alpha:=\max_{1\leq i\leq N}\sup_{y\in W}\frac{k_\alpha(y)}{\sigma^2(y_i)}\label{eq:etatilde}
        \end{align}
        in order to obtain
        \begin{align*}
            \frac{1}{C_\beta}\SaRnb k_\alpha(x)\inner{\alpha^*}{\beta^*}\frac{1}{\sqrt{y_\alpha}}&=\frac1{C_\beta}\SaRnb \Sni(\beta_i^*)^2k_\alpha(x)\inner{\alpha^*}{\beta^*}\frac{1}{\sqrt{y_\alpha}}\\
            %&=\frac1{C_\beta}\SaRnb \Sni(\beta_i^*)^2\frac{k_\alpha(x)}{\sigma^2(x_i)}\sigma^2(x_i)\inner{\alpha^*}{\beta^*}\frac{1}{\sqrt{y_\alpha}}\\
            &\geq -\frac1{C_\beta}\SaRnb \tilde{\eta}_\alpha\Sni(\beta_i^*)^2\sigma^2(x_i)\frac{1}{\sqrt{y_\alpha}}=-\SaRnb \frac{\tilde{\eta}_\alpha}{\sqrt{y_\alpha}},
        \end{align*}   
        while recalling that $|\beta^*|^2=\Sni(\beta_i^*)^2=1$ and that $|\abs|\geq-1$ due to the Cauchy-Schwarz inequality.
        With these inequalities, we can write the bound
        \begin{align}
            dy_\beta\geq2\sqrt{y_\beta}\, dW-2\sqrt{y_\beta}\bigg(c_R \hat b+\SaRnb \frac{\tilde\eta_\alpha}{\sqrt{y_\alpha}}\bigg)\, ds +(2\check\eta_\beta+1)\,ds.\label{eq:sdenewlowerbound}
        \end{align}
        The critical point of the derivation is the following: because the drift term proportional to $\sqrt{y_\beta}$ is negative, the right hand side is a squared Bessel process with an additional attraction term, and it almost surely hits zero if $\check\eta_\beta<1/2$ \cite{GoeingJaeshckeYor03}. By Corollary~\ref{cor:projectiondistance}, we know that for every time $s$ at which a collision occurs, $x$ has a distance at most $\varepsilon>0$ from $\Cb$, and the smallest projection is unique when $\varepsilon$ is sufficiently small. Hence, if the smallest projection is $\sqrt{y_\beta}$, we have $\sqrt{y_\beta}\leq \varepsilon$ and there exists $\delta(\varepsilon)>0$ such that any other projection $\sqrt{y_\alpha}$ satisfies $\sqrt{y_\alpha}\geq \sqrt{y_\beta}+\delta(\varepsilon)$ within the $s$-time transformed interval $[s_{\varepsilon-},s_{\varepsilon+}]$.
        Thus, we can write
        \begin{align*}
            dy_\beta &\geq 2\sqrt{y_\beta}\, dW-2\sqrt{y_\beta}\bigg(c_R\hat{b}+\SaRnb \frac{\tilde{\eta}_\alpha}{\sqrt{y_\alpha}}\bigg)\, ds +(2\check{\eta}_\beta+1)\,ds\\
            &\geq 2\sqrt{y_\beta}\, dW +\bigg(2\bigg[\check{\eta}_\beta-\varepsilon c_R\hat b-\varepsilon\frac{\tilde h(M-1)}{\delta(\varepsilon)}\bigg]+1\bigg)\,ds,
        \end{align*}
        where
        \begin{align}
            \tilde{h}:=\max_{\alpha\in R_+}\tilde{\eta}_\alpha.\label{eq:htilde}
        \end{align}
        In other words, we see that $y_\beta$ is bounded below by a squared Bessel process $\check z_{\beta,\varepsilon}$ that has a zero set with Hausdorff dimension
        \[\dim \check z_{\beta,\varepsilon}^{-1}(0)=\max\Big\{0,\frac12-\check\eta_\beta+\varepsilon c_R\hat b+\varepsilon\frac{\tilde h(M-1)}{\delta(\varepsilon)}\Big\},\]
        and which hits zero almost surely whenever $\check\eta<1/2$ \cite{GoeingJaeshckeYor03, LiuXiao98}.
        This implies that by \ref{prop:Hausdorff:monotonicity}, for every time $t^{(\beta)}$ where $y_\beta(t^{(\beta)})=0$ there exists a closed interval $[t_{\varepsilon-}^{(\beta)},t_{\varepsilon+}^{(\beta)}]\ni t^{(\beta)}$ such that
        \[\dim \big(y_\beta^{-1}(0)\cap\big[t_{\varepsilon-}^{(\beta)},t_{\varepsilon+}^{(\beta)}\big]\big)\leq \dim \check z_{\beta,\varepsilon}^{-1}(0)=\max\bigg\{0,\frac12-\check{\eta}_\beta+ c_R \hat{b}\varepsilon+\tilde{h}(M-1)\frac{\varepsilon}{\delta(\varepsilon)}\bigg\}.\]
        Now, for an arbitrary positive integer $n$, we can use \ref{prop:Hausdorff:countablestability} to write
        \[\dim y_\beta^{-1}(0)=\dim y_\beta^{-1}(0)\cap\bigcup_{n=1}^\infty[0,n]=\sup_{n\in\N}\dim y_\beta^{-1}(0)\cap[0,n],\]
        and because the lower bound in \eqref{eq:sdenewlowerbound} hits zero almost surely for $\check\eta_\beta<1/2$, it follows that as $n\to\infty$ there exists an interval $[0,n]$ which contains a collision, so we write
        \begin{align*}
            \dim y_\beta^{-1}(0)&=\sup_{n\in\N}\dim y_\beta^{-1}(0)\cap[0,n]\leq\sup_{n\in\N}\max\bigg\{0,\frac12-\check{\eta}_\beta+ c_R \hat{b}\varepsilon+\tilde{h}(M-1)\frac{\varepsilon}{\delta(\varepsilon)}\bigg\}\\
            &=\max\bigg\{0,\frac12-\check{\eta}_\beta+ c_R \hat{b}\varepsilon+\tilde{h}(M-1)\frac{\varepsilon}{\delta(\varepsilon)}\bigg\},
        \end{align*}
        and because we can choose $\varepsilon$ arbitrarily small, we see that
        \[\dim y_\beta^{-1}(0)\leq\max\bigg\{0,\frac12-\check{\eta}_\beta\bigg\}.\]
        We finish by using \ref{prop:Hausdorff:countablestability} and writing
        \begin{align*}
          \dim x^{-1}(\Cb)&= \dim \bigcup_{\beta\in\SimplePlus}y_\beta^{-1}(0)=\max_{\beta\in\SimplePlus}\dim y_\beta^{-1}(0)\leq \max_{\beta\in\SimplePlus} \max\bigg\{0,\frac12-\check{\eta}_\beta\bigg\}=\max\bigg\{0,\frac12-\min_{\beta\in\SimplePlus}\check{\eta}_\beta\bigg\}.
        \end{align*}
    \end{proof}

\subsection{Lower bound}
    \begin{proof}[Proof of Theorem~\ref{th:Hausdorfflower}]
        In the same vein as the proof of 
        Theorem~\ref{th:Hausdorffupper}, we consider the squared projection onto the arbitrary simple root $\beta$, and carry on the same calculations up to \eqref{eq:sqprojtobound}.
        By \ref{ass:k:monotonicty}, we can show that 
        \[2\SaRnb k_\alpha(x)\inner{\alpha^*}{\beta^*}\frac{1}{\sqrt{y_\alpha}}\]
        is not positive. We begin by expanding the sum as follows,
        \begin{align*}
            2\SaRnb k_\alpha(x)\inner{\alpha^*}{\beta^*}\frac{1}{\sqrt{y_\alpha}}&=\SaRnb \frac{k_\alpha(x)}{\xas}\inner{\alpha^*}{\beta^*}+\sum_{\reflect{\beta}\alpha\in R_+\setminus \{\beta\}} \frac{k_{\reflect{\beta}\alpha}(x)}{\inner{x}{\reflect{\beta}\alpha^*}}\inner{\reflect{\beta}\alpha^*}{\beta^*}.
        \end{align*}
        The rhs follows from the fact that for any simple root $\beta$, every positive root $\alpha\neq\beta$ is reflected onto another positive root, namely $\reflect{\beta}\alpha\in R_+\setminus\{\beta\}$. 
        We can rewrite this expression using \eqref{eq:Rplus} and setting $\gamma=\reflect{\beta}\alpha$,
        \begin{align*}
            2\SaRnb k_\alpha(x)\inner{\alpha^*}{\beta^*}\frac{1}{\sqrt{y_\alpha}}&=\sum_{(\alpha,\gamma)\in R_+(\beta)}\Big( \frac{k_\alpha(x)}{\xas}\inner{\alpha^*}{\beta^*}+ \frac{k_{\gamma}(x)}{\inner{x}{\gamma^*}}\inner{\gamma^*}{\beta^*}\Big),
        \end{align*}
        while keeping in mind that $\inner{\gamma^*}{\beta^*}=\inner{\reflect{\beta}\alpha^*}{\beta^*}=\inner{\alpha^*}{\reflect{\beta}\beta^*}=-\inner{\alpha^*}{\beta^*}$.
        Now, we apply \ref{ass:k:monotonicty}. 
        First, suppose that $\inner{\alpha^*}{\beta^*}>0$; we can then write
        \[\gamma^*=\alpha^*-2\abs\beta^*,\]
        and because both $\alpha^*$ and $\gamma^*$ are positive roots, each can be written as a linear combination of simple roots with non-negative coefficients. 
        From this last expression, we see that the expansion of $\gamma^*$ in terms of simple roots includes a coefficient of $\beta^*$ that decreases by $-2\abs$, so the coefficient of $\beta^*$ either decreases while staying positive, or cancels, as it cannot be negative, and in the root systems we consider it cancels exactly. Therefore $\gamma^*\leq\alpha^*$, and by \ref{ass:k:monotonicty} we see that
        \begin{align}
            \frac{k_\alpha(x)}{\xas}\inner{\alpha^*}{\beta^*}+ \frac{k_{\gamma}(x)}{\inner{x}{\gamma^*}}\inner{\gamma^*}{\beta^*}\leq0.\label{eq:A3:lower}
        \end{align}
        Conversely, if we suppose that $\inner{\alpha^*}{\beta^*}<0$, then $-\abs=\inner{\reflect{\beta}\alpha^*}{\beta^*}=\cbs>0$, and by a similar argument we can write
        \[\alpha^*=\gamma^*+2\abs\beta^*=\gamma^*-2\inner{\reflect{\beta}\alpha^*}{\beta^*}\beta^*=\gamma^*-2\cbs\beta^*,\]
        which allows us to conclude that $\alpha^*\leq\gamma^*$.
        By \ref{ass:k:monotonicty}, we recover \eqref{eq:A3:lower} for all possible cases, and consequently we can write
        \[\sum_{(\alpha,\gamma)\in R_+(\beta)}\Big( \frac{k_\alpha(x)}{\xas}\inner{\alpha^*}{\beta^*}+ \frac{k_{\gamma}(x)}{\inner{x}{\gamma^*}}\inner{\gamma^*}{\beta^*}\Big)\leq0.\]

        By assumption \ref{ass:b:monotone}, we can also see that
        \[\Sni\beta_i^*b(x_i)\leq0\]
        is satisfied as follows. The root systems we consider always include simple roots of the form $\beta^*=(e_{j+1}-e_{j})/\sqrt2$. For these, we see that
        \[\Sni\frac{e_{j+1}-e_{j}}{\sqrt2}b(x_i)=\frac{1}{\sqrt{2}}(b(x_{j+1})-b(x_j))\leq0\]
        because all points $x\in\C$ are such that $x_j\leq x_{j+1}$ and $b$ is non-increasing.
        In addition, for the simple roots $e_1$ (for $B_N$) and $e_2+e_1$ (for $D_N$) we have
        \begin{align*}
            \Sni\frac{e_{2}+e_{1}}{\sqrt2}b(x_i)&=\frac{1}{\sqrt{2}}(b(x_{2})+b(x_1))\leq0\ \text{and}\\
            \Sni e_{1}b(x_i)&=b(x_1)\leq 0
        \end{align*}
        because $b$ is non-positive.
        Therefore, for all simple roots $\beta$ the upper bound
        \begin{align*}
            dy_\beta &\leq 2\sqrt{y_\beta}\, dW+\bigg(2\frac{k_\beta(x)}{C_\beta}+1\bigg)\,ds
        \end{align*}
        is satisfied for the root systems in consideration.
        
        We now define for every simple root
        \begin{align}
            \hat\eta_\beta:=\sup_{y\in W}\frac{k_\beta(y)}{\Sni(\beta_i^*)^2\sigma^2(y_i)}=\sup_{y\in W}\frac{k_\beta(y)}{C_\beta(y)}\label{eq:etahat}
        \end{align}
        in order to write
        \begin{align*}
            dy_\beta &\leq 2\sqrt{y_\beta}\, dW+\bigg(2\hat{\eta}_\beta+1\bigg)\,ds.
        \end{align*}
        This inequality implies that $y_\beta$ is bounded above by a squared Bessel process, say $\hat z_{\hat{\eta}_\beta}$, of dimension $2\hat{\eta}_\beta+1$ which hits zero almost surely when $\hat{\eta}_\beta<1/2$ and has a Hausdorff dimension of hitting times at zero given almost surely by \cite{GoeingJaeshckeYor03,LiuXiao98}
        \[\dim \hat z_{\hat{\eta}_\beta}^{-1}(0) = \max\Big\{0,\frac12-\hat{\eta}_\beta\Big\}.\]
        Recalling that $\hat z_{\hat\eta_\beta}^{-1}(0)\subseteq y_\beta^{-1}(0)$, we are now in position to write
        \begin{align*}
            \dim\Big(x^{-1}(\Cb)\Big)&=\dim\Big(\bigcup_{\beta\in\SimplePlus}y_\beta^{-1}(0)\Big)=\max_{\beta\in\SimplePlus}\dim y_\beta^{-1}(0)\\
            &\geq\max_{\beta\in\SimplePlus}\dim \hat z_{\hat\eta_\beta}^{-1}(0)=\max_{\beta\in\SimplePlus}\max\Big\{0,\frac12-\hat\eta_\beta\Big\}=\max\Big\{0,\frac12-\min_{\beta\in\SimplePlus}\hat\eta_\beta\Big\}.
        \end{align*}
        The second equality follows from \ref{prop:Hausdorff:countablestability}, and the inequality in the second line follows from \ref{prop:Hausdorff:monotonicity}. 
        \end{proof}

    \begin{proof}[Proof of Lemma~\ref{th:Hausdorfflowerweak}]
        In the same way as in the proof of Theorem~\ref{th:Hausdorfflower}, we can start from \eqref{eq:sqprojtobound}, as no assumptions are used up to that point. The next step is to try and bound the SDE of $y_\beta$ from above, and in order to do this, we impose conditions on $k_\alpha$, $\sigma$, and $b$. 
        Recall the constants $\hat b$, $c_R$, and $\tilde\eta_\alpha$ introduced in \eqref{eq:bhat}, \eqref{eq:constcr}, and \eqref{eq:etatilde}. With these, we write
        \[\frac{1}{C_\beta}\Sni\beta_i^*b(x_i)=\frac{1}{C_\beta}\Sni(\beta_i^*)^2\sigma^2(x_i)\frac{1}{\beta_i^*}\frac{b(x_i)}{\sigma^2(x_i)}\leq \frac{c_R\hat{b}}{C_\beta}\Sni(\beta_i^*)^2\sigma^2(x_i)=c_R\hat{b},\]
        and
        \begin{align*}
            \frac{1}{C_\beta}\SaRnb k_\alpha(x)\inner{\alpha^*}{\beta^*}\frac{1}{\sqrt{y_\alpha}}&=\frac1{C_\beta}\SaRnb \Sni(\beta_i^*)^2k_\alpha(x)\inner{\alpha^*}{\beta^*}\frac{1}{\sqrt{y_\alpha}}\\
            %&=\frac1{C_\beta}\SaRnb \Sni(\beta_i^*)^2\frac{k_\alpha(x)}{\sigma^2(x_i)}\sigma^2(x_i)\inner{\alpha^*}{\beta^*}\frac{1}{\sqrt{y_\alpha}}\\
            &\leq \frac1{C_\beta}\SaRnb \tilde{\eta}_\alpha\Sni(\beta_i^*)^2\sigma^2(x_i)\frac{1}{\sqrt{y_\alpha}}=\SaRnb \frac{\tilde{\eta}_\alpha}{\sqrt{y_\alpha}}.
        \end{align*}
        We carried out these calculations in similar way to those in the proof of Theorem~\ref{th:Hausdorffupper}.
        We can now make use of $\hat\eta_\beta$, as defined in \eqref{eq:etahat} to write
        \begin{align}
            dy_\beta &= 2\sqrt{y_\beta}\, dW+2\sqrt{y_\beta}\bigg(\frac{1}{C_\beta}\Sni\beta_i^*b(x_i)+\SaRnb \frac{k_\alpha(x)}{C_\beta}\inner{\alpha^*}{\beta^*}\frac{1}{\sqrt{y_\alpha}}\bigg)\, ds +\bigg(2\frac{k_\beta(x)}{C_\beta}+1\bigg)\,ds\notag\\
            &\leq 2\sqrt{y_\beta}\, dW+2\sqrt{y_\beta}\bigg(c_R\hat{b}+\SaRnb \frac{\tilde{\eta}_\alpha}{\sqrt{y_\alpha}}\bigg)\, ds +\Big(2\hat{\eta}_\beta+1\Big)\,ds.\notag
        \end{align}
        Let us assume that a collision occurs. By Corollary~\ref{cor:projectiondistance}, and a similar argument to that given in the proof of Theorem~\ref{th:Hausdorffupper}, we know that for every time $s$ at which $x$ is at a distance at most $\varepsilon>0$ from $\Cb$, and the smallest unique projection is $\sqrt {y_\beta}$, there exists $\delta(\varepsilon)>0$ such that any other projection $\sqrt{y_\alpha}$ satisfies $\sqrt{y_\alpha}\geq \sqrt{y_\beta}+\delta(\varepsilon)$ within the $s$-time transformed interval $[s_{\varepsilon-},s_{\varepsilon+}]$. Then, we can write
        \begin{align*}
            dy_\beta &\leq 2\sqrt{y_\beta}\, dW+2\sqrt{y_\beta}\bigg(c_R\hat{b}+\SaRnb \frac{\tilde{\eta}_\alpha}{\sqrt{y_\alpha}}\bigg)\, ds +(2\hat{\eta}_\beta+1)\,ds\\
            &\leq 2\sqrt{y_\beta}\, dW +\bigg(2\bigg[\hat{\eta}_\beta+\varepsilon c_R\hat b+\varepsilon\frac{\tilde h(M-1)}{\delta(\varepsilon)}\bigg]+1\bigg)\,ds,
        \end{align*}
        where $\tilde{h}$ is given in \eqref{eq:htilde}. This implies that for every time $t^{(\beta)}$ where $y_\beta(t^{(\beta)})=0$ we have a closed interval $[t_{\varepsilon-}^{(\beta)},t_{\varepsilon+}^{(\beta)}]\ni t^{(\beta)}$ for which
        \[\dim \big(y_\beta^{-1}(0)\cap\big[t_{\varepsilon-}^{(\beta)},t_{\varepsilon+}^{(\beta)}\big]\big)\geq \max\bigg\{0,\frac12-\hat{\eta}_\beta- c_R \hat{b}\varepsilon-\tilde{h}(M-1)\frac{\varepsilon}{\delta(\varepsilon)}\bigg\}.\]
        We recall now that on every collision there exists exactly one simple root $\beta$ for which $y_\beta=0$. Therefore, we can choose any collision time to write
        \begin{align*}
            \dim\big( x^{-1}(\Cb)\big)&=\dim\Big(\bigcup_{\beta\in \SimplePlus}y_\beta^{-1}(0)\Big)\geq\dim\big( y_\beta^{-1}(0)\big)\geq\dim \big(y_\beta^{-1}(0)\cap\big[t^{(\beta)}_{\varepsilon-},t^{(\beta)}_{\varepsilon+}\big]\big)\\
            &\geq\max\Big\{0,\frac12-\hat{\eta}_\beta+ c_R \hat{b}\varepsilon+\hat{h}(M-1)\frac{\varepsilon}{\delta(\varepsilon)}\Big\}.
        \end{align*}
        In the first line, we used \ref{prop:Hausdorff:monotonicity}. We can take $\varepsilon$ arbitrarily small, so we arrive at
        \[\dim \big(x^{-1}(\Cb)\big)\geq \max\Big\{0,\frac12-\hat{\eta}_\beta\Big\}.\]
        Here, we used $\varepsilon/\delta(\varepsilon)\to0$ by Corollary~\ref{cor:projectiondistance}. We conclude that
        \[\dim \big(x^{-1}(\Cb)\big)\geq \max\Big\{0,\frac12-\max_{\beta\in\SimplePlus}\hat{\eta}_\beta\Big\}.\]
        Here we note that, when applying Corollary~\ref{cor:projectiondistance}, we have assumed that a collision occurs. This is not always the case, as the drift $b$ may work to push $x$ away from $\Cb$, and depending on the initial condition imposed on $x$ it may happen that there are no collisions. Indeed, comparing the possibly positive coefficient of $\sqrt{y_\beta}\, ds$ in \eqref{eq:sqprojtobound} with (9) in \cite{GoeingJaeshckeYor03} (as well as the text under (10) of the same reference), we see that we can only state that the lower bound we have found holds with positive probability, with the trivial bound of zero when no collisions occur. 
    \end{proof}
        
\section{Particular cases}\label{sec:examples}

    Our results are readily applied to several well-known cases.
    \subsection{Multivariate Bessel processes}

    Also known as radial Dunkl processes \cite{RoslerVoit98,GallardoYor05}, these are processes given by the SDE
    \[dx_i=dB_i+\SaR k_\alpha\frac{\alpha_i}{\xa}\,dt,\]
    that is, $\sigma=1$, $b=0$, and $k_\alpha(x)=k_\alpha$, with the added constraint that $k_\alpha$ must be invariant under reflections along the roots in $R_+$, namely $k_{\reflect{\beta}\alpha}=k_\alpha$ for every $\alpha,\beta\in R_+$. Assuming that $R=A_{N-1}$, $B_N$, or $D_N$, from Theorems~\ref{th:multiplecollisions}-\ref{th:Hausdorfflower} the following is immediate.
    \begin{corollary}
        The collision times of the Bessel process with its Weyl chamber have the following Hausdorff dimension,
        \[\dim x^{-1}(\Cb)=\max\Big\{0,\frac12-\min_{\alpha\in\SimplePlus}k_\alpha\Big\}.\]
        \label{cor:multiBessel}
    \end{corollary}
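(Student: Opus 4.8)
The plan is to verify that the Bessel process satisfies the hypotheses of Theorems~\ref{th:multiplecollisions}, \ref{th:Hausdorffupper} and \ref{th:Hausdorfflower}, and then to observe that in this special case the upper and lower dimension bounds collapse onto the same value. The engine of the argument is an elementary simplification: when $\sigma\equiv1$ and $k_\alpha(y)\equiv k_\alpha$ is a constant, the ratio controlling both bounds becomes, using $\Sni\alpha_i^2=|\alpha|^2$,
\[\frac{|\alpha|^2k_\alpha(y)}{\Sni\alpha_i^2\sigma^2(y_i)}=\frac{|\alpha|^2k_\alpha}{|\alpha|^2}=k_\alpha,\]
independently of $y\in W$. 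Hence both $\inf_{y\in W}$ and $\sup_{y\in W}$ of this ratio equal $k_\alpha$, so the upper bound of Theorem~\ref{th:Hausdorffupper} and the lower bound of Theorem~\ref{th:Hausdorfflower} each reduce to $\max\{0,\tfrac12-\min_{\alpha\in\SimplePlus}k_\alpha\}$ (the minimum over $\Delta_+$ in the lower bound ranges over the same simple system $\SimplePlus$), which forces equality.

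First I would dispatch the easy hypotheses. Continuity \ref{ass:sigma:b:continuous} is immediate since $\sigma\equiv1$, $b\equiv0$ and the $k_\alpha$ are constants; \ref{ass:R:root system} is assumed; and \ref{ass:sigma:k:positive} holds provided $k_\alpha>0$, while $\sigma\equiv1>0$. Assumption \ref{ass:b:monotone} is satisfied trivially because $b\equiv0$ gives $\Sni\alpha_ib(x_i)=0$, and the boundedness of $b/\sigma^2\equiv0$ and $k/\sigma^2\equiv k_\alpha$ is obvious. With \ref{ass:sigma:b:continuous} and \ref{ass:sigma:k:positive} in hand, Theorem~\ref{th:multiplecollisions} applies, so all collisions are simple and Corollary~\ref{cor:projectiondistance}, on which the dimension arguments rely, is available.

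The delicate point, and the one I expect to be the main obstacle, is assumption \ref{ass:k:monotonicty}, which is needed for the lower bound. For $R=A_{N-1}$ and $R=D_N$ all roots share a single length, so reflection invariance $k_{\reflect{\beta}\alpha}=k_\alpha$ forces $k_\alpha\equiv k$ to be a single constant, and \ref{ass:k:monotonicty} collapses to the statement that $\xa\leq\xb$ whenever $\alpha\leq\beta$, which holds on $\Cc$. For $R=B_N$ there are two root lengths, $k$ may genuinely take two distinct values, and \ref{ass:k:monotonicty} in full generality can fail. The resolution I would use is that the proof of Theorem~\ref{th:Hausdorfflower} invokes \ref{ass:k:monotonicty} only for reflection pairs $(\alpha,\gamma)\in R_+(\beta)$ with $\gamma=\reflect{\beta}\alpha$ and $\beta$ simple; since a reflection is an isometry, $\alpha$ and $\gamma$ have equal length, and reflection invariance then gives $k_\alpha=k_\gamma$. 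Writing $k_\alpha=k_\gamma=k$ and $\inner{\gamma^*}{\beta^*}=-\inner{\alpha^*}{\beta^*}$, the required inequality \eqref{eq:A3:lower} becomes a comparison of $1/\xas$ with $1/\inner{x}{\gamma^*}$, which follows from the root-order implication $\xa\leq\xb$ exactly as in the general proof. Thus the conclusion of Theorem~\ref{th:Hausdorfflower} remains valid even though the hypothesis is verified only in the precise form the proof uses.

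Assembling the pieces, Theorem~\ref{th:Hausdorffupper} yields $\dim x^{-1}(\Cb)\leq\max\{0,\tfrac12-\min_{\alpha\in\SimplePlus}k_\alpha\}$ and Theorem~\ref{th:Hausdorfflower} yields the matching lower bound, giving the stated equality. I expect the only genuine work to be the bookkeeping for $B_N$ described above; the remainder is substitution into the two bounds.
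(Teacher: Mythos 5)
Your proposal is correct, and it is actually more careful than the paper, which gives no proof of Corollary~\ref{cor:multiBessel} beyond asserting it is immediate from Theorems~\ref{th:multiplecollisions}--\ref{th:Hausdorfflower}. The substitution step is the same in both: with $\sigma\equiv1$ and constant $k_\alpha$, the ratio $|\alpha|^2k_\alpha/\Sni\alpha_i^2\sigma^2(y_i)$ equals $k_\alpha$ identically, so the upper and lower bounds collapse to $\max\{0,\tfrac12-\min_{\alpha\in\SimplePlus}k_\alpha\}$. What you add, and what the paper silently skips, is the verification of \ref{ass:k:monotonicty} for $R=B_N$, and you are right that this is a genuine issue: taking $\alpha=e_1\leq\beta=e_1+e_2$, the condition demands $k_1(x_1+x_2)\geq k_2x_1$ on $\Cc$, which fails at $x_1=x_2$ unless $k_2\leq 2k_1$; and taking $\alpha=e_j-e_i\leq\beta=e_j$, it demands $k_2x_j\geq k_1(x_j-x_i)$, which fails at $x_i=0$ unless $k_1\leq k_2$. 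So \ref{ass:k:monotonicty} holds for the $B_N$ Bessel process only in the window $k_1\leq k_2\leq 2k_1$, while the corollary is stated for all $k_1,k_2>0$. Your repair is exactly the right one: the proof of Theorem~\ref{th:Hausdorfflower} invokes \ref{ass:k:monotonicty} only through \eqref{eq:A3:lower}, for reflection pairs $(\alpha,\gamma)\in R_+(\beta)$ with $\beta$ simple; there $|\gamma|=|\alpha|$ since $\reflect{\beta}$ is an isometry, reflection invariance gives $k_\gamma=k_\alpha$, and \eqref{eq:A3:lower} reduces (for $\abs>0$) to $\xcs\leq\xas$ --- which follows most directly from $\gamma=\alpha-2\ab\beta/|\beta|^2$ and $\xb\geq0$ on $\Cc$, rather than through the root-order implication you cite, though your chain also closes. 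Two pedantic remarks: equal root length alone does not force a single Weyl orbit (it is irreducibility of the simply-laced systems $A_{N-1}$ and $D_N$, $N\geq3$, that does; for the reducible $D_2$ the two orbits are orthogonal, so \ref{ass:k:monotonicty} is vacuous across them, as you would need); and your parenthetical that the $\beta^*$-coefficient ``cancels exactly'' is inherited from the paper's proof and is not needed once $k_\alpha=k_\gamma$ is in hand. In short, your argument proves the corollary and simultaneously closes a small gap in the paper's claim that it is immediate.
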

    This can be extended immediately to a process with functions $k$, $\sigma$ and $b$ satisfying \ref{ass:sigma:k:positive}-\ref{ass:k:monotonicty}. The requirement that $b$ be a decreasing function has the following physical interpretation: $b$ represents a force due to an external potential, namely $b(x_i)=-V'(x_i)$, and if $V(x_i)$ is a convex function, it forms a potential well where all particles are confined. For instance, our results include the invariant measure case where $\sigma=1$ and $b(x_i)=-x_i$, or $V(x_i)=\frac12x_i^2$, which is the case where the particles are confined to a harmonic potential. These processes are specialized to the several well-known multiple-particle systems, which we summarize as follows.
    \subsection{Dyson model}
    The Dyson model \cite{Dyson62A} is obtained by setting $R=A_{N-1}$, $\sigma(x)=1$, $b(x)=0$, and $k_\alpha(x)=k>0$, with the positive roots being $\{e_j- e_i\}_{1\leq i<j\leq N}$. The Weyl chamber is described by $W_{A_{N-1}}=\{x\in\R^N:x_i\leq x_j, 1\leq i<j\leq N\}$, and the process is given by the following SDE,
        \[dx_i=dB_i+k\sum_{j=1:j\neq i}^N\frac{dt}{x_i-x_j}.\]
        By Corollary~\ref{cor:multiBessel}, we recover the following familiar result \cite{HufnagelAndraus24}: the set of times where particle collisions occur in the Dyson model has a Hausdorff dimension given by
        \[\dim x^{-1}(\partial W)=\max\Big\{0,\frac12-k\Big\}.\]
    An important feature of this process is that it can be expressed as the eigenvalue process of a real orthogonal or complex Hermitian random matrix with independent Brownian motions up to symmetry \cite{KatoriTanemura04}. These matrix-valued processes are known for showing no collisions, which can be observed in the Hausdorff dimension formula above, as they correspond to $k=1/2$ and $1$.

    Note in addition that there is a large freedom in this case, as by Theorems~\ref{th:multiplecollisions}--\ref{th:Hausdorfflower} we can add a non-increasing drift function $b$ without changing the Hausdorff dimension.

    \subsection{Multivariate Bessel process of type B}
    Of particular interest is the case where $R=B_N$, in which $k_\alpha$ is reduced to two independent parameters, $k_1$ for the positive roots $\{e_i\}_{1\leq i\leq N}$,
    %\nicole{Maybe, use same notation as for the other roots for $k_2$ then it is the same as for Jacobi but the old notation is also fine to me:} $\nicole{\{e_i\}_{1\leq i\leq N}}$
    and $k_2$ for the positive roots $\{e_j\pm e_i\}_{1\leq i<j\leq N}$. Its Weyl chamber is given by $W_{B_N}=\{x\in\R^N:0\leq x_i\leq x_j, 1\leq i<j\leq N\}$, and its SDE reads
    \begin{align*}
        dx_i&=dB_i+\frac{k_1}{x_i}\,dt+k_2\sum_{j=1:j\neq i}^N \Big\{\frac{1}{x_i-x_j}+\frac{1}{x_i+x_j}\Big\}\,dt\\
        &=dB_i+\frac{k_1}{x_i}\,dt+k_2\sum_{j=1:j\neq i}^N \frac{2x_i}{x_i^2-x_j^2}\,dt,
    \end{align*}
    and the particles are in the Weyl chamber $\C_{B_N}=\{x\in\R^N:0\leq x_i\leq x_j, 1\leq i<j\leq N$\}. By Corollary~\ref{cor:multiBessel}, the corresponding Hausdorff dimension of collision times is
    \[\dim x^{-1}(\Cb_B)=\max\Big\{0,\frac12-\min\{k_1,k_2\}\Big\}.\]
    The parameters $k_1$ and $k_2$ control the type of collision that occurs. Indeed, by \cite[Prop.~1]{Demni09}, the first hitting time of zero is finite if $k_1<1/2$, and infinite otherwise, while the first collision time between particles is finite if $k_2<1/2$ and infinite otherwise. This observation will be useful for the next example.
    
    \subsection{Wishart processes}
    
    The Wishart process \cite{Bru91,KonigOConnell01}  is another important system that can be formulated as the matrix eigenvalue process of the product of a rectangular matrix with independent Brownian motion entries with its transpose. As in the Dyson case, the values $\kappa=1$ and $2$ correspond to real and complex matrix formulations of the system, but here we consider the general case given by the SDE
    \begin{align*}
        dy_i&=2\sqrt{y_i}\,dB_i+\kappa a\,dt+\kappa\sum_{j=1:j\neq i}^N\frac{y_i+y_j}{y_i-y_j}\,dt.
        % \\
        % &=2\sqrt{y_i}\,dB_i+(2k_1+2k_2(N-1)+1)\,dt+2k_2\sum_{j=1:j\neq i}^N\frac{y_i+y_j}{y_i-y_j}\,dt.
    \end{align*}
    Here, $\kappa$ and $a$ are positive parameters, and particles are in the Weyl chamber $W_{B_N}=\{x\in\R^N:0\leq x_i\leq x_j, 1\leq i<j\leq N\}$, but the repulsive interaction between particles is given by the root system $A_{N-1}$.
    The Wishart process can be obtained from the multivariate Bessel process of type $B_N$ by the relationships $y_i=x_i^2$, $\kappa a=2k_1+2k_2(N-1)+1$, and $\kappa=2k_2$.
    This implies that Wishart process particles do not hit zero provided
    \[k_1\geq1/2,\ \text{or}\ a\geq\frac2\kappa+N-1.\]
    %\nicole{Maybe both conditions in one line?} 
    If this condition is fulfilled, our results can be applied to this process as well: observe that $\sigma(y_i)=2\sqrt{y_i}$, $b(y_i)=\kappa a$, $k_\alpha(y)=\kappa(y_i+y_j)$, and due to $y_i>0$ for every $i=1,\ldots,N$, \ref{ass:sigma:k:positive} is satisfied. Then, we see that 
    \[\frac{|\alpha|^2k_\alpha(y)}{\Sni\alpha_i^2\sigma^2(y_i)}=\frac{2(2k_2(y_i+y_j))}{4(y_i+y_j)}=k_2,\]
    but because $\sigma^2(x_i)=4x_i$, the ratio $b/\sigma^2$ is not bounded close to the origin of the positive half line. However, since we can guarantee there are no collisions with the origin, the constant $\hat b$ in the proofs of Theorem~\ref{th:Hausdorffupper} and Lemma~\ref{th:Hausdorfflowerweak} becomes an almost surely bounded positive random variable, and the calculations can be completed without problems. Ultimately, we obtain the following statement.
    \begin{corollary}
        When $a\geq 2/\kappa +N-1$, the Hausdorff dimension of collision times between particles in a Wishart process is given by
        \[\dim y^{-1}(\Cb_B)=\max\Big\{0,\frac{1-\kappa}{2}\Big\}.\]
    \end{corollary}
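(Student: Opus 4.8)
The plan is to realize the Wishart process $y=(y_1,\dots,y_N)$ as a special case of the general system \eqref{eq:x:SDE:mult} with root system $R=A_{N-1}$, and then to invoke the matching upper and lower bounds of Theorems~\ref{th:Hausdorffupper} and~\ref{th:Hausdorfflower}. With $\sigma(y_i)=2\sqrt{y_i}$, $b(y_i)=\kappa a$, and $k_\alpha(y)=\kappa(y_i+y_j)$ for the positive root $\alpha=e_j-e_i$, the interaction drift collapses to $\kappa\sum_{j\neq i}(y_i+y_j)/(y_i-y_j)$, so the particle--particle repulsion is governed exactly by the $A_{N-1}$ roots $\{e_j-e_i\}_{i<j}$, and $y$ becomes a solution in the sense of \ref{ass:G:sol}. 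I would first check the structural assumptions: continuity \ref{ass:sigma:b:continuous} and the root--system restriction \ref{ass:R:root system} are immediate; \ref{ass:sigma:k:positive} holds because $y_i>0$ after the start; and \ref{ass:b:monotone} holds trivially since $b\equiv\kappa a$ is constant, hence non-increasing, which is all that is required for $A_{N-1}$. Since $\overline{W}_{A_{N-1}}=\{y_1\leq\cdots\leq y_N\}$ forces $D=\R$, on which $\sigma(y)=2\sqrt{y}$ is not defined for $y<0$, I would extend $\sigma,b$ continuously to all of $\R$; this is harmless because $y$ never leaves $(0,\infty)^N$.

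The only substantive verification is the monotonicity assumption \ref{ass:k:monotonicty}, needed for the lower bound (Theorem~\ref{th:Hausdorfflower}); the upper bound (Theorem~\ref{th:Hausdorffupper}) requires only \ref{ass:sigma:k:positive}. For a pair such as $\alpha=e_{i+1}-e_i\leq\beta=e_{i+2}-e_i$ with $\ab\neq0$, cross-multiplying $k_\alpha(y)/\inner{y}{\alpha}\geq k_\beta(y)/\inner{y}{\beta}$ reduces, in the chamber $0\leq y_1\leq\cdots\leq y_N$, to $2y_i(y_{i+2}-y_{i+1})\geq0$, which holds; the remaining pairs are treated identically. With the hypotheses secured, the decisive simplification is that the ratio controlling both dimension bounds is \emph{constant},
\[\frac{|\alpha|^2k_\alpha(y)}{\Sni\alpha_i^2\sigma^2(y_i)}=\frac{2\kappa(y_i+y_j)}{4(y_i+y_j)}=\frac{\kappa}{2}=k_2\]
for every positive root $\alpha=e_j-e_i$. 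Hence the infimum in Theorem~\ref{th:Hausdorffupper} and the supremum in Theorem~\ref{th:Hausdorfflower} both equal $k_2$, so the two bounds pinch to $\max\{0,\tfrac12-k_2\}$ for the $A_{N-1}$ boundary.

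The main obstacle is that the remaining ratio $b/\sigma^2=\kappa a/(4y_i)$ blows up as $y_i\to0$, so the boundedness hypothesis of the two theorems fails near the origin. I would remove this obstruction using the standing assumption $a\geq 2/\kappa+N-1$, which is equivalent to $k_1\geq1/2$ and, by \cite[Prop.~1]{Demni09}, guarantees that no particle ever reaches zero. Consequently $\min_i y_i(t)\geq c(\omega)>0$ on every compact time interval almost surely by continuity, so the constant $\hat b$ of \eqref{eq:bhat} may be replaced by the almost surely finite random constant $\kappa a/(4c(\omega))$; this does not disturb the argument of Theorem~\ref{th:Hausdorffupper}, where $\hat b$ enters only through the term $\varepsilon c_R\hat b$ that vanishes as $\varepsilon\to0$, and it does not appear at all in Theorem~\ref{th:Hausdorfflower}, whose drift is discarded via \ref{ass:b:monotone}. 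The same no-collision-with-origin fact shows that the faces $\{y_i=0\}$ are almost surely never visited, so $y^{-1}(\partial W_{B_N})=y^{-1}(\partial W_{A_{N-1}})$ as sets of times and the computation above applies verbatim to the $B_N$ boundary. Substituting $k_2=\kappa/2$ then yields
\[\dim y^{-1}(\Cb_B)=\max\Big\{0,\tfrac12-k_2\Big\}=\max\Big\{0,\tfrac{1-\kappa}{2}\Big\},\]
as claimed.
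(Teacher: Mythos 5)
Your proposal is correct and, in its skeleton, matches the paper's own argument: you identify $\sigma(y_i)=2\sqrt{y_i}$, $b\equiv\kappa a$, $k_\alpha(y)=\kappa(y_i+y_j)$ with the $A_{N-1}$ roots, compute the constant ratio $|\alpha|^2k_\alpha(y)/\sum_i\alpha_i^2\sigma^2(y_i)=\kappa/2=k_2$ so that the infimum and supremum in the two dimension bounds coincide, use the correspondence $y_i=x_i^2$ with the type-$B$ Bessel process and \cite[Prop.~1]{Demni09} to translate $a\geq 2/\kappa+N-1$ into $k_1\geq 1/2$ and hence no visits to the origin, and repair the unbounded ratio $b/\sigma^2=\kappa a/(4y_i)$ by promoting $\hat b$ to an almost surely finite random constant on compact time intervals. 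The one genuine divergence is the lower bound: the paper, by its own wording, routes it through Lemma~\ref{th:Hausdorfflowerweak} (where $\hat b$ appears and must be randomized), which as stated only yields the bound with positive probability, whereas you verify \ref{ass:b:monotone} (trivial, since constant $b$ is non-increasing and for $A_{N-1}$ the simple-root sums telescope to zero) and \ref{ass:k:monotonicty} (your cross-multiplication, e.g. $(y_i+y_j)(y_l-y_i)-(y_i+y_l)(y_j-y_i)=2y_i(y_l-y_j)\geq 0$, using positivity of the coordinates along the trajectory) in order to invoke Theorem~\ref{th:Hausdorfflower}, in whose proof $\hat b$ never enters because the $b$-drift is discarded via \ref{ass:b:monotone}. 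This buys an almost-sure matching lower bound, which is cleaner than the paper's positive-probability route; the price is the extra monotonicity check, which---as in the paper's Jacobi example---holds only ``effectively'' along trajectories, since the inequality in \ref{ass:k:monotonicty} is stated on all of $\Cc$ and can fail at points of $\Cc_{A_{N-1}}$ with negative coordinates, a caveat your no-hitting-zero observation covers. Your explicit remark that $y^{-1}(\partial \C_{B_N})=y^{-1}(\partial \C_{A_{N-1}})$ almost surely, and your continuous extension of $\sigma$ off $(0,\infty)$, make precise two points the paper leaves implicit.
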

    
    \subsection{Jacobi processes}
    Jacobi processes are mutually repelling particle systems in a finite segment of the real line. As introduced in \cite{Demni10}, all particles are ordered and lie in the interval $(0,1)$, but for the sake of symmetry we consider the process on $(-1,1)$, namely, the $N$ particles $\{\lambda_i\}_{i=1}^N$ in \cite{Demni10} are replaced by $x_i=2\lambda_i-1$, and the corresponding SDE reads
    \begin{align*}
        dx_i=\sqrt{1-x_i^2}\,dB_i+k\bigg[\frac{1}{2}(p-q-(p+q)x_i)+\sum_{j=1:j\neq i}^N\frac{1-x_ix_j}{x_i-x_j}\bigg]\,dt,
    \end{align*}
    where we have replaced the parameter $\beta>0$ by $k$ for notational convenience, and $p$ and $q$ are integer parameters satisfying $\min\{p,q\}\geq N-1+2/k$ in order to ensure the SDE has a unique strong solution.
    We note here that the cases $k=1$ and $2$ can be formulated as dynamical extensions of the Jacobi ensembles, very much in the same way as the Wishart and Dyson cases above.
    From the SDE, it is clear that $k>0$ governs the repulsion between particles, while $p$ and $q$ represent the repulsion strength of the left and right walls respectively.

    Similar to the Wishart process, we see that $\sigma(x_i)=\sqrt{1-x_i^2}$, $b(x_i)=k[p-q-(p+q)x_i]/2$, and $k_{i,j}(x)=k(1-x_ix_j)$. 
    Because the condition $\min\{p,q\}\geq N-1+2/k$ ensures that $k\min\{p,q\}\geq2$, $x_1$ will not hit $-1$, and $x_N$ will not hit $1$ almost surely, so effectively \ref{ass:sigma:k:positive}-\ref{ass:k:monotonicty} are met here as well.
    Indeed, both $\sigma(x_i)$ and $k_{i,j}(x)$ are effectively positive as no particle hits $\pm1$, $b(x_i)$ is a decreasing function, and for $m<i<j<n$,
    \[\frac{1-x_ix_j}{x_j-x_i}-\frac{1-x_mx_n}{x_n-x_m}=\frac{(x_n-x_j)(1-x_ix_m)+(x_i-x_m)(1-x_jx_n)}{(x_j-x_i)(x_n-x_m)}\geq0,\]
    so \ref{ass:k:monotonicty} holds as
    \begin{align*}
        \frac{k_{i,j}(x)}{x_j-x_i}=k\frac{1-x_ix_j}{x_j-x_i}\geq k\frac{1-x_mx_n}{x_n-x_m}=\frac{k_{m,n}(x)}{x_n-x_m}.
    \end{align*}
    Furthermore,
    \begin{align}
        \frac{|\alpha|^2k_\alpha(x)}{\Sni \alpha_i^2\sigma^2(x_i)}=k\frac{2(1-x_ix_j)}{1-x_i^2+1-x_j^2}=k\frac{2-x_i^2-x_j^2+(x_j-x_i)^2}{2-x_i^2-x_j^2}=k\Big\{1+\frac{(x_j-x_i)^2}{2-x_i^2-x_j^2}\Big\}\geq k,\label{eq:jacobi:k:sigma}
    \end{align}
    which implies that
    \begin{align*}
        \dim x^{-1}(\Cb)\leq\max\Big\{0,\frac12-k\Big\}.
    \end{align*}
    Note that we do not have a non-trivial lower bound here. 
    This is because the ratio of $k$ and $\sigma^2$ in \eqref{eq:jacobi:k:sigma} is not bounded above. 
    In a manner similar to the Wishart process case, for this process the constants $\tilde\eta_\alpha$ become random variables with almost sure upper bounds, which allows us to complete the calculations in the proof of Theorem~\ref{th:Hausdorffupper}.
    However, not having an upper bound for $\eqref{eq:jacobi:k:sigma}$ implies that near the walls at $\pm1$ the repulsion interaction between particles may be much larger than the fluctuations induced by the Brownian motion, so we cannot state that particle collisions occur with certainty.

\section*{Acknowledgments}
    SA was supported by the Kakenhi grant number JP19K14617 of the Japanese Society for the Promotion of Science (JSPS).
\bibliography{bibtex}

\begin{thebibliography}{10}

\bibitem{Adler1981}
{\sc R.~Adler}, {\em The Geometry of Random Fields}, Classics in Applied
  Mathematics, Society for Industrial and Applied Mathematics (SIAM, 3600
  Market Street, Floor 6, Philadelphia, PA 19104), 1981.

\bibitem{Bru91}
{\sc M.-F. Bru}, {\em Wishart processes}, J. Theor. Probab., 4 (1991),
  pp.~725--751.

\bibitem{Demni09}
{\sc N.~Demni}, {\em Radial {D}unkl processes: {E}xistence, uniqueness and
  hitting time}, Comptes Rendus de l'Acad{\'e}mie des Sciences de Paris,
  S{\'e}rie I, 347 (2009), pp.~1125--1128.

\bibitem{Demni10}
{\sc N.~Demni}, {\em $\beta$-{J}acobi processes}, Adv. Pure Appl. Math., 1
  (2010), pp.~325--344.

\bibitem{Dyson62A}
{\sc F.~Dyson}, {\em A {B}rownian-motion model for the eigenvalues of a random
  matrix}, J. Math. Phys., 3 (1962), pp.~1191--1198.

\bibitem{Falconer2013fractal}
{\sc K.~Falconer}, {\em Fractal Geometry: Mathematical Foundations and
  Applications}, Wiley, 2013.

\bibitem{Forrester10}
{\sc P.~Forrester}, {\em Log-gases and Random Matrices}, London Mathematical
  Society Monographs, Princeton University Press, 2010.

\bibitem{GallardoYor05}
{\sc L.~Gallardo and M.~Yor}, {\em Some new examples of {M}arkov processes
  which enjoy the time-inversion property}, Probab. Theory Relat. Fields, 132
  (2005), pp.~150--162.

\bibitem{GoeingJaeshckeYor03}
{\sc A.~G{ö}ing-Jaeschke and M.~Yor}, {\em A survey and some generalizations
  of {B}essel processes}, Bernoulli, 2 (2003), pp.~313--349.

\bibitem{HufnagelAndraus24}
{\sc N.~Hufnagel and S.~Andraus}, {\em Hausdorff dimension of collision times
  in one-dimensional log-gases}, AIP Advances, 14 (2024), pp.~1--16,{ paper no.
  }095123.

\bibitem{Humphreys}
{\sc J.~E. Humphreys}, {\em Reflection Groups and Coxeter Groups}, Cambridge
  University Press, 1997.

\bibitem{KatoriTanemura04}
{\sc M.~Katori and H.~Tanemura}, {\em Symmetry of matrix-valued stochastic
  processes and noncolliding diffusion particle systems}, J. Math. Phys., 45
  (2004), pp.~3058--3085.

\bibitem{KonigOConnell01}
{\sc W.~K{\"o}nig and N.~O'Connell}, {\em Eigenvalues of the {L}aguerre process
  as non-colliding squared {B}essel processes}, Electron. Comm. Probab., 6
  (2001), pp.~107--114.

\bibitem{LiuXiao98}
{\sc L.~Liu and Y.~Xiao}, {\em Hausdorff dimension theorem for self-similar
  {M}arkov processes}, Probability and Mathematical Statistics, 18 (1998),
  pp.~369--383.

\bibitem{Malecki25}
{\sc J.~Ma\l{}ecki}, {\em Multivariate {B}essel-{D}unkl diffusions}, preprint,
  (2025).

\bibitem{McKean}
{\sc H.~P. McKean}, {\em Stochastic Integrals}, Academic Press, 1969.

\bibitem{RoslerVoit98}
{\sc M.~R{\"o}sler and M.~Voit}, {\em Markov processes related with {D}unkl
  operators}, Advances in Applied Mathematics, 22 (1998), pp.~575--643.

\bibitem{Yuan24}
{\sc W.~Yuan}, {\em Multiple collisions of eigenvalues and singular values of
  matrix {G}aussian field}.
\newblock https://arxiv.org/abs/2407.09070.

\end{thebibliography}
\newpage

\end{document}